\def\IC{\mathbb{C}}
\def\IN{\mathbb{N}}
\def\BH{\mathcal{B}(H)}
\newcommand{\id}{\mathop{\mathrm{id}}}
\newcommand{\rank}{\mathop{\mathrm{rank}}}
\newcommand{\TRO}{\mathrm{TRO}}
\newcommand{\TROu}{{T^*}}
\newcommand{\tp}[3]{\{#1 , #2 , #3\}}
\newcommand{\trop}[3]{[#1 , #2 , #3]}
\newcommand{\utro}[1]{\alpha_{#1}}
\newcommand{\jp}{\circ}
\newcommand{\OP}{\mathrm{op}}
\newcommand{\tmpref}[1]{}
\newcommand{\JCstar}{$JC^*$}
\newcommand{\Cstar}{$C^*$}
\newcommand{\Wstar}{$W^*$}
\newcommand{\JC}{$JC$}
\newcommand{\JW}{$JW$}
\newcommand{\JBstar}{$JB^*$}
\newcommand{\JWstar}{$JW^*$}
\newcommand{\BARotimes}{\mathbin{\overline{\otimes}}}
\newcommand{\cvtrace}{\tau}
\def\simone{\mathinner{\mkern7mu\raise-5pt
        \vbox{\kern7pt\hbox{{\tiny 1}}}\mkern-10mu
	        \hbox{$\sim$}\mkern2mu}
}
\numberwithin{equation}{section}
\newtheorem{proposition}{Proposition}[section]
\newtheorem{lemma}[proposition]{Lemma}
\newtheorem{theorem}[proposition]{Theorem}
\newtheorem{corollary}[proposition]{Corollary}
\theoremstyle{definition}
\newtheorem{remark}[proposition]{Remark}
\newtheorem{remarks}[proposition]{Remarks}
\newtheorem{definition}[proposition]{Definition}
\begin{document}
\title{Universally reversible \JCstar-triples and operator spaces}
\author[L.~J.~Bunce]{Leslie J. Bunce}
\address{Department of Mathematics, University of Reading, Reading RG6
2AX}
\email{l.j.bunce@reading.ac.uk}
\author[R.~M.~Timoney]{Richard M. Timoney}
\address{School of Mathematics\\Trinity College\\Dublin 2}
\email{richardt@maths.tcd.ie}
\thanks{The work of the second author was supported
by the Science Foundation Ireland under grant 11/RFP/MTH3187.}
\date{Version \today}

\subjclass[2010]{47L25; 17C65; 46L70}

\keywords{
Cartan factor;
JC-operator space;
operator space ideal;
ternary ring of operators;
universal TRO;
tripotent;
projection}

\begin{abstract}
We prove that the vast majority of \JCstar-triples satisfy the condition
of universal reversibility.
Our characterisation
is that a \JCstar-triple is universally reversible if and only if it has
no triple homomorphisms onto Hilbert spaces of dimension greater than
two nor onto spin factors of dimension greater than four. We establish
corresponding characterisations in the cases of \JWstar-triples and of
TROs (regarded as \JCstar-triples). We show that the distinct natural
operator space structures on a universally reversible \JCstar-triple
$E$ are
in bijective correspondence with a distinguished class of ideals in
its universal TRO, identify the Shilov boundaries of these operator
spaces and prove that $E$ has a unique natural operator space structure
precisely when $E$ contains no ideal isometric to a nonabelian TRO.
We deduce some decomposition and completely contractive properties of
triple homomorphisms on TROs.
\end{abstract}
\maketitle

\section{Introduction}

The norm closed subspaces of $\BH$ invariant under the ternary
product  $\trop a b c  =  ab^*c$
and known as TROs (ternary rings of operators)
have a well-documented significance in the category of operator spaces in
which, up to complete isometry, they occur as the noncommutative Shilov
boundaries, injective envelopes and as  Hilbert \Cstar-modules
\cite[Chapters 4, 8]{BlecherleMerdy}.
TROs have a natural operator space structure since the
algebraic isomorphisms between them are exactly the surjective complete
isometries. This is a state generally not possessed by the norm closed
subspaces of TROs invariant under the symmetrised triple product 
$\tp a b c  =  (1/2)(\trop abc + \trop cba)$
which arose in a different tradition
and were named $J^*$-algebras by Harris
\cite{Harris74,Harris81}
who showed that the
open unit ball of each such space is a bounded symmetric domain, that
is, the group of biholomorphic automorphisms of the open unit ball is
transitive. Kaup's extension \cite{Kaup}
is a formidable conjunction of
algebra and analysis: the open unit ball of a complex Banach space $E$ is
a bounded symmetric domain if and only if there is a continuous map 
$\tp \cdot \cdot \cdot \colon E^3 \to E$
such that the operator $D(a, b)$ on $E$ given by $D(a, b)(c)
=  \tp a b c$ is sesquilinear, satisfies the \emph{Jordan triple identity}, 
$[D(a, b), D(x, y)]  =  D( \tp a b x, y)  -  D(x, \tp y a b )$
and that $D(x, x)$
is a positive hermitian operator on $E$ with norm $\|x\|^2$
(for all $a, b, c, x$ and $y$ in $E$). This class of Banach spaces, the 
\emph{\JBstar-triples}, is invariant
under linear isometries and a key feature is that the \emph{Jordan triple product}
$\tp a b c$ on a \JBstar-triple is unique: the triple isomorphisms between
\JBstar-triples are the surjective linear isometries.

   The linear isometric copies of $J^*$-algebras, hereafter referred
to as \JCstar-triples, are principal examples of \JBstar-triples. In these
terms $J^*$-algebras are precisely the \JCstar-subtriples of $\BH$, or \emph{concrete}
\JCstar-triples, and by the triple
Gelfand-Naimark
theorem of Friedman
and Russo \cite{FRgn}
\JCstar-triples are the \JBstar-triples with vanishing
‘’exceptional ideal’’. The \JCstar-triples with a predual are
called \emph{\JWstar-triples}. Hilbert spaces are \JCstar-triples as are all (linear
isometric copies of) Cartan factors, \Cstar-algebras, TROs and Jordan operator
algebras. Friedman and Russo \cite{FRJFA1985}
have also shown that the range of
a contractive projection on a \Cstar-algebra is a \JCstar-triple, though not
necessarily a subtriple of the ambient \Cstar-algebra (one by-product of
this paper is the passing observation
that all simple \JCstar-triples arise in this way up to linear
isometry).

  Operator space structure of reflexive \JCstar-triples has been
investigated in important articles
by Neal, Ricard and Russo \cite{NealRicardRusso06} and
by Neal and Russo \cite{NealRussoTAMS03,NealRussoPAMS06}, who also
proved \cite{NealRussoPacJ03} that an operator space $X$
is completely isometric to a TRO
if and only if $M_n(X)$ is a \JCstar-triple (in the abstract sense
defined above) for all $n \geq 2$.
In \cite{BunceFeelyTimoney}
B. Feely and the authors began
a general study of \emph{\JC-operator spaces} (the operator spaces induced
by linear isometries onto concrete \JCstar-triples) which was continued in
\cite{BunceTimoneyII,BunceTimoney3}. The \JC-operator spaces of all
Cartan factors (see \S\ref{sec:preliminaries}) were described
and enumerated in the process  via instrumental use of the notions
(inaugurated in  \cite{BunceFeelyTimoney})  of the universal TRO of a \JCstar-triple and of
a universally reversible \JCstar-triple, conceived by analogy with
companion notions in Jordan operator 
algebras \cite{HancheOlsenJC,HOS} which they precisely generalise
\cite[\S4]{BunceFeelyTimoney}.

Universally reversible \JCstar-triples form a class of Banach spaces
preserved by linear isometries.
The extent and influence of this class is the subject of this paper.

The setting is that given
$a_1$, \dots , $a_{2n+1}$ in a \JCstar-subtriple $E$ of $\BH$ with
$n \geq 2$, the \emph{reversible} element
\[
a_1a_2^*a_3 \cdots a_{2n}^*a_{2n+1}  + a_{2n+1} a_{2n}^* \cdots  a_3a_2^*a_1
\]
(lying in the TRO generated by $E$) is not a Jordan-theoretic product and
has no compelling algebraic claim to membership of
$E$: if $E$ contains all reversible elements arising in this way it is
said to be \emph{reversible} in $\BH$.
A \JCstar-triple E is defined to be
\emph{universally reversible} if $\pi(E)$ is reversible in
$\BH$ for every triple homomorphism $\pi \colon E \to \BH$.

By \cite{BunceFeelyTimoney}
Cartan factors are universally reversible with the 
exceptions of Hilbert spaces of dimension $\geq 3$  and spin factors 
of dimension $\geq  5$. We shall prove that the latter are essentially 
the only obstacles to universal reversibility thus showing 
that most \JCstar-triples satisfy the condition and that failure 
to do so is confined to a sharply delineated isolated class.

Our characterisation is that a \JCstar-triple is not universally 
reversible when, and only when, it has a triple homomorphism 
onto a Hilbert space of dimension $\geq 3$ or a spin factor of 
dimension $\geq  5$ (Theorem~\ref{thm:charaterise}).
Further results in \S\ref{sec:characterising}
give the structures of 
universally reversible \JWstar-triples (in
Theorem~\ref{thm:jwstarurchar}) and \Wstar-TROs
(Corollary~\ref{corol:wstartroUR}), and that section
contains other results of independent 
interest. The prior section, \S\ref{sec:Reversibility},
deals with important special 
cases such as one-sided weak*-closed ideals in von Neumann 
algebras.  Having shown the prevalence of universally reversible 
\JCstar-triples we study their \JC-operator spaces in
\S\ref{sec:JCOpspaces}, distilling 
the significance (as it turns out) of nonabelian TROs in the 
process (Theorem~\ref{thm:20120622512}).
For a given universally reversible \JCstar-triple E we 
prove that its \JC-operator spaces are in bijective correspondence 
with a distinguished family of ideals in its universal 
TRO (Theorem~\ref{thm:201206510}),
identify the Shilov boundaries of each such operator space 
(Corollary~\ref{coroll:ShilovBdry})
and severally characterise those E with a unique \JC-operator 
space structure (Theorem~\ref{thm:20120622512}).
Modulo a mild restriction, we apply our results 
to show that triple homomorphisms on a TRO routinely decompose 
into the sum of a TRO homomorphism and a TRO antihomomorphism 
(Proposition~\ref{prop:triplehomdecomp})
and deduce that a triple automorphism on a \Wstar-TRO factor
not isometric 
to a von Neumann algebra must be a TRO automorphism
(Theorem~\ref{thm:WstarFactorCI}).

\section{Preliminaries}
\label{sec:preliminaries}

We refer to surveys
\cite{RodSurvey1994,RussoSurvey}
and articles
\cite{EdwardsRuttimann1988JLMS,FRgn,Harris74,Harris81,KaupFibreBund}
for general background on \JCstar-triples
and to \cite{BlecherleMerdy,ER}
for the theory of operator spaces and also for TROs, further
information about which may be found in
\cite{BunceFeelyTimoney,BunceTimoney3,EffrosOzawaRuan,Hamana99,Zettl}.
Related to \JCstar-triples are the \JCstar-algebras, the
complexifications of the \JC-algebras studied thoroughly in
\cite{HOS}. \JCstar-algebras are the norm closed subspaces of
\Cstar-algebras invariant under the involution and Jordan product 
$a \circ b = (ab + ba)/2$, which are all \JCstar-triples because
$\tp a b c = (a \circ b^*) \circ c + a \circ (b^* \circ c) - (a \circ
c) \circ b^*$.
A \JWstar-algebra is a weakly closed \JCstar-subalgebra of a von
Neumann algebra.
Given elements $a_1, \ldots, a_{2n+1}$ of a TRO $T$, we often write
$\trop{a_1}{\cdots}{a_{2n+1}}$ for the element
$a_1 a_2^*a_3 \cdots a_{2n}^*a_{2n+1}$ of $T$.

Throughout this paper the term ideal shall mean norm closed ideal. In
each of the four above mentioned categories ideals have the obvious
algebraic
definition with respect to the relevant product and, by a result
of Harris        \cite[Proposition 5.8]{Harris81},
the definitions coincide
whenever the categories overlap. We write $\TRO(X)$ for the TRO generated
by subset  $X$ in a \Cstar-algebra and recall \cite[Proposition
3.2]{BunceTimoney3} that
$\TRO(I)$ is an ideal of $\TRO(E)$ with $I  =  E \cap∩ \TRO(I)$
whenever $I$ is
an ideal of a concrete \JCstar-triple $E$.

Let $E$ be a \JCstar-triple. Elements $x$ and $y$ in $E$
are said to be orthogonal
(denoted by  $x \perp y$) if $\tp xxy  =  0$, which is equivalent to 
$x^*y  = xy^*  =  0$
when $E$ is a \JCstar-subtriple of $\BH$, implying that 
$\|x  + y\|  =  \max(\|x\|, \|y\|)$ \cite[p. 18]{Harris74}.
Given $x \in E$ and  $S \subseteq  E$ we
write $x \perp S$ if $x \perp y$ for all $y$ in $S$ and denote 
$\{x \in E : x \perp S \}$ by $S^\perp$.
Ideals $I$ and $J$ of $E$ are orthogonal, that is, $I \subseteq
J^\perp$ ⊥  (equivalently, $J \subseteq I^\perp$)
if and only if $I \cap J  =  \{0\}$,
in which case, $I + J  =  I \oplus_\infty ∞ J$.  If $E$ is a 
\JWstar-triple with a
weak*-closed ideal $I$ then $E$ is the sum of $I$ and its complementary
weak*-closed ideal  $I^\perp$.   A \JWstar-triple $E$ is a factor if it has
no nontrivial weak*-closed ideals. Given $x$ in $E$, $Q_x$  denotes the
conjugate linear operator $y \mapsto  \tp x y x$ on $E$.

An element $u$ of a \JCstar-triple $E$ is a \emph{tripotent}
if $\tp u u u = u$. The \emph{Peirce projections} $P_k(u)$ ($k = 0,
1,2$, $u$ a tripotent) on $E$ associated with $u$ are given by
\[
P_2(u) = Q_u^2,
P_1(u) = 2(D(u,u) - P_2(u))
\mbox{ and }
P_0(u) = I - P_2(u) - P_0(u)
\]
and satisfy
$P_i(u)P_j(u) = 0$ whenever $i \neq j$. Also
$E = E_2(u) \oplus E_1(u) \oplus E_0(u)$ [linear direct sum]
where $E_k(u) = P_k(u)E = \{ x \in E: 2 \tp u u x = k x\}$ is
the \emph{Peirce $k$-space} for $u$ and each $E_k(u)$ is a 
\JCstar-subtriple of $E$. In addition,
$E_2(u)$ is a \JCstar-algebra with 
product
$x \jp y = \tp x u y$, identity element $u$ and involution
$x^{\#} = \tp u x u $.
If $E$ is a \JWstar-triple then $E_2(u)$ is a \JWstar-algebra.

A tripotent $u \in E$ is said to be \emph{abelian} if $E_2(u)$ is an
abelian \JCstar-algebra (equivalently, an abelian \Cstar-algebra)
and to be \emph{minimal} if $E_2(u) = \IC u$.
If $P_0(u) =0$, then $u$ is called a \emph{complete}
tripotent of $E$ and is called a \emph{unitary} tripotent if
$P_0(u) = P_1(u) = 0$.
The completeness of a tripotent $u$ is equivalent to
$u^\perp = \{0\}$
(by \cite[p. 18]{Harris74}).
and to
$u$ being an extreme
point of the closed unit ball of $E$
(by \cite[Theorem 11]{Harris74}).

We note the following.

\begin{lemma}
\label{lem:tripots}
Let $u$ be a tripotent in a \JCstar-triple $E$, $M$ a \JWstar-triple,
and let $\pi \colon E \to M$ be a nonzero triple homomorphism
with weak*-dense range.
If $u$ is a complete (respectively, a unitary, an abelian, a minimal)
tripotent of $E$, then
$\pi(u)$ is a complete (respectively, a unitary, an abelian, a minimal)
tripotent of $M$. In
particular $\pi(u) \neq 0$ if $u$ is a complete tripotent of $E$.
\end{lemma}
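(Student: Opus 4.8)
The plan is to prove each of the four preservation statements by exploiting the algebraic characterizations recorded in the Preliminaries, together with the fact that a triple homomorphism $\pi$ intertwines all the triple-product constructions (Peirce projections, the operator $Q_x$, orthogonality) and that weak*-density lets me transport density-type information across $\pi$.

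\begin{proof}
First I would record the two basic functorial facts. Since $\pi$ is a triple homomorphism, it intertwines the triple product, hence $Q_{\pi(x)} \pi(y) = \pi(Q_x y)$ and $D(\pi(x),\pi(x))\pi(y) = \pi(D(x,x)y)$ for all $x,y \in E$; consequently $\pi$ intertwines the Peirce projections associated with $\pi(u)$ and $u$ in the sense that $P_k(\pi(u))\pi(x) = \pi(P_k(u)x)$, and it maps orthogonal elements to orthogonal elements. In particular, if $\tp uuu = u$ then $\tp{\pi(u)}{\pi(u)}{\pi(u)} = \pi(u)$, so $\pi(u)$ is a tripotent of $M$.

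\emph{Completeness.} Here is where weak*-density enters. Suppose $u$ is complete, i.e.\ $u^\perp = \{0\}$ in $E$; I want $\pi(u)^\perp = \{0\}$ in $M$. For $x$ in the dense subtriple $\pi(E)$, write $x = \pi(y)$; if $x \perp \pi(u)$ then $\pi(Q_u y) = Q_{\pi(u)} x = 0$, but this does not immediately give $y \perp u$ because $\pi$ need not be injective, so I cannot argue pointwise on the range alone. Instead I would pass to the second dual / use that $\pi$ extends to a normal triple homomorphism $\bar\pi \colon E^{**} \to M$ whose range is all of $M$ (weak*-density plus normality), and that $E^{**}$ is a \JWstar-triple in which $u$ remains complete. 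The Peirce decomposition $M = M_2(\pi(u)) \oplus M_1(\pi(u)) \oplus M_0(\pi(u))$ is the image under $\bar\pi$ of the corresponding decomposition of $E^{**}$; since completeness of $u$ means $E^{**}_0(u) = \{0\}$ (as $u^\perp = E_0(u)$), the intertwining gives $M_0(\pi(u)) = \bar\pi(E^{**}_0(u)) = \{0\}$, so $\pi(u)$ is complete. This second-dual passage is the step I expect to be the main obstacle, since one must verify that the Peirce $0$-space of the image is exactly the image of the Peirce $0$-space, which relies on surjectivity of $\bar\pi$ rather than on $\pi$ alone.

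\emph{Unitarity, abelianness, minimality.} Unitary means $P_0(u) = P_1(u) = 0$; by the same image-of-Peirce-space argument both $M_0(\pi(u))$ and $M_1(\pi(u))$ vanish, giving unitarity of $\pi(u)$. For the abelian and minimal cases I would work inside the Peirce-$2$ algebras. The restriction of $\bar\pi$ to $E^{**}_2(u)$ is a unital Jordan $*$-homomorphism of \JWstar-algebras onto $M_2(\pi(u))$ (unital because $\pi(u)$ is the identity of $M_2(\pi(u))$, surjective because $M_2(\pi(u)) = \bar\pi(E^{**}_2(u))$). If $E_2(u)$ is abelian then so is $E^{**}_2(u)$, and a quotient of an abelian Jordan algebra is abelian, so $M_2(\pi(u))$ is an abelian \JCstar-algebra and $\pi(u)$ is abelian. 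For minimality, $E_2(u) = \IC u$ forces $E^{**}_2(u) = \IC u$ as well, hence its image $M_2(\pi(u)) = \IC\,\pi(u)$, which (since $\pi(u) \neq 0$) says $\pi(u)$ is minimal. The final sentence of the lemma is the special case of completeness with $u^\perp = \{0\}$: because $M_0(\pi(u)) = \{0\}$ forces $\pi(u)$ to be an extreme point of the unit ball of $M$, it cannot be $0$.
\end{proof}
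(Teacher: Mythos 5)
The paper offers no proof of this lemma (it is introduced with ``We note the following''), so there is nothing to compare against line by line; your argument is correct and is essentially the routine verification the authors are suppressing. The route you take --- extend $\pi$ to the weak*-continuous triple homomorphism $\bar\pi \colon E^{**} \to M$, observe that $\bar\pi$ is surjective because its range is weak*-closed and contains the weak*-dense set $\pi(E)$, intertwine the Peirce projections, and use that $E^{**}_k(u)$ is the weak*-closure of $E_k(u)$ --- is exactly the machinery the paper itself invokes in \S2 for Cartan factor representations, and each of the four cases then reduces correctly to ``the image of a zero Peirce space is zero'' or ``a unital Jordan $*$-homomorphic image of an abelian (resp.\ one-dimensional) Peirce-$2$ algebra is abelian (resp.\ spanned by $\pi(u)$)''. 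You are also right that the naive argument on the dense range $\pi(E)$ alone does not work and that surjectivity of $\bar\pi$ is the point.

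Two small caveats. First, you quote without proof two standard facts: that the range of a weak*-continuous triple homomorphism between \JWstar-triples is weak*-closed, and that $P_k(u)E^{**}$ is the weak*-closure of $P_k(u)E$ (so that, e.g., $E_0(u)=\{0\}$ forces $E^{**}_0(u)=\{0\}$). Both follow from separate weak*-continuity of the triple product together with Goldstine and Krein--\v{S}mulian, and are fair to cite, but they are the only places where real content is being imported. Second, in the minimal case your parenthetical ``since $\pi(u)\neq 0$'' is not justified: a minimal tripotent can be killed by $\pi$ (take $E=\IC\oplus_\infty\IC$, $u=(1,0)$ and $\pi$ the projection onto the second coordinate, which has weak*-dense range $\IC$). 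This is precisely why the lemma asserts $\pi(u)\neq 0$ only for \emph{complete} tripotents; what your argument actually establishes, and all the paper ever uses, is the identity $M_2(\pi(u))=\IC\,\pi(u)$, with minimality in the usual sense following whenever $\pi(u)$ happens to be nonzero.
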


By a \emph{Cartan factor} we shall mean a \JCstar-triple
linearly isometric to one of the following \emph{canonical forms}
(i) $\BH e$;
(ii) $\{ x \in \BH : x^t = x\}$, $\dim H \geq 2$;
(iii) $\{ x \in \BH : x^t = -x\}$, $\dim H \geq 4$;
(iv) a spin factor, where in (i) -- (iii)
$H$ is a Hilbert space,
$e \in \BH$ is a projection
and $x \mapsto x^t$ is a
transposition.
Up to linear isometry, the forms (ii), (iii) when $H$ has even or
infinite dimension, and (iv), are type I \JWstar-triples and those of
form (i) where $e$ is a minimal projection are Hilbert spaces.
The spin factor of finite dimension $n+1$ is denoted by $V_n$.

We denote the norm closed linear span of the minimal tripotents in a
Cartan factor $C$ by $K_C$, 
an
ideal of $C$ (which consists
of the compact operators contained in $C$ when $C$ is of type (i),
(ii) or (iii) above).
The Cartan factors may be characterised as the \JWstar-triple factors
possessing minimal tripotents.
The \emph{rank} of a Cartan factor $C$ is the cardinality of a maximal
family of orthogonal minimal tripotents in $C$.

The following three conditions are equivalent for a Cartan factor $C$:
\begin{equation}
\mbox{(a) $C$ has finite rank;   (b)
$K_C  =  C$;   (c) $C$ is reflexive.}
\end{equation}

A \emph{Cartan factor representation} of a \JCstar-triple $E$ is a
triple homomorphism $\pi \colon E \to C$  where $C$ is a Cartan factor
with $\pi(E)$ weak*-dense in $C$ (so that $\pi(E) = C$ if $C$ has
finite rank).
If $\pi \colon E \to C$ is a 
a Cartan factor representation, the weak*-continuous extension
$\tilde{\pi} \colon E^{**} \to C$ is a surjective triple homomorphism
so that the orthogonal complement of $\ker \tilde{\pi}$ is a
weak*-closed ideal of $E^{**}$ linearly isometric to $C$.
Conversely, if $J$ is a weak*-closed ideal of $E^{**}$ linearly
isometric to $C$, then the restriction to $E$ of the natural
projection from $E^{**}$ onto $J$ induces a Cartan factor
representation $\pi \colon E \to C$.
Every \JCstar-triple $E$ has a separating family of Cartan factor
representations \cite{FRgn}.

The type of a \JWstar-algebra is the type of its self-adjoint part,
a \JW-algebra, and the corresponding decomposition theory of
\JWstar-algebras may be read directly from
\cite{HOS,Stacey1981,Stacey1982,Topping}.
In particular, a \JWstar-algebra is of type I if it is the norm
closed linear span of abelian projections and is continuous if it
has no nonzero abelian projections. We briefly recall the extension
to \JWstar-triples due to Horn and Neher
\cite{HornPredual,HornTypeI,HornNeher}.

Following Horn \cite{HornTypeI} we use $B ⨂ \BARotimes  N$ to denote the weak* closure
of the algebraic tensor product  $B \otimes N$ in the von Neumann algebra
tensor product  $\BH \BARotimes ⨂ \mathcal{B}(K)$
when $B$ is an abelian von Neumann
subalgebra of $\BH$ and N is a \JWstar-subtriple of $\mathcal{B}(K)$.

A \JWstar-triple is defined to be of \emph{type I} if it is the
weak*-closed linear span of its abelian tripotents, and to be
\emph{continuous} if it contains no nonzero abelian tripotents
\cite{HornPredual,HornNeher}. By Horn's theorem \cite{HornTypeI}, a
\JWstar-triple $M$ is type I if and only if $M$ is linearly isometric
to an $\ell^\infty$ sum
\begin{equation}
\label{eqn:TypeI}
\bigoplus_j B_j \BARotimes C_j
\end{equation}
where each $B_j$ is an abelian von Neumann algebra and each $C_j$ is a
Cartan factor in canonical form. When all $B_j$ are nonzero and the
$C_j$ are mutually distinct we shall refer to $M$ as being
\emph{$\{ C_j\}$-homogeneous}.

We further say that a \JWstar-triple
$M$ is of \emph{type I$_{\mathrm{finite}}$}
if each $C_j$ has finite rank in the decomposition (\ref{eqn:TypeI}).
Specialising, we refer to $M$ as \emph{type I$_1$} if each
$C_j$ is (isometric to) a Hilbert space, and as 
\emph{type I$_{\mathrm{spin}}$} if each $C_j$ is a spin factor.
Equivalently (by \cite[\S2]{HornTypeI}) $M$ is type I$_1$ if and only
if
it contains a complete abelian tripotent and (by \cite[Theorem
6.3.14]{HOS}) $M$ is type I$_{\mathrm{spin}}$
if and only if it is linearly isometric to a \JWstar-algebra
with self-adjoint part of
type I$_2$ in the sense of \cite[5.3.3]{HOS}.
If each $C_j$ has infinite rank we say that $M$ has type I$_\infty$,
in which case, collecting terms, with $\cong$ meaning `linearly
isometric to', we have
\begin{equation}
\label{eqn:typeIinfinity}
M \cong N \oplus_\infty We,
\end{equation}
where $N$ is a type I$_\infty$ \JWstar-algebra and $e$ is a properly
infinite projection in a type I$_\infty$ von Neumann algebra $W$.

In general, by \cite[(1.17)]{HornNeher} every \JWstar-triple $M$ has a
decomposition
\begin{equation}
\label{eqn:typeIpluscontinuous}
M \cong M_1 \oplus_\infty M_c,
\end{equation}
where the first summand is type I and the second is continuous,
referred to as the type I and continuous parts of $M$, respectively.
In addition (see \cite[\S4]{HornNeher})
\begin{equation}
\label{eqn:continuousdecomp}
M_c \cong N \oplus_\infty We,
\end{equation}
where $N$ is a continuous \JWstar-algebra and $e$ is a projection in a
continuous von Neumann algebra $W$.

We remark that a \JWstar-algebra is continuous  as a \JWstar-algebra if and
only if it is continuous as a \JWstar-triple, the `if' part being
clear since abelian projections are abelian tripotents. Conversely,
suppose $M$ is a continuous \JWstar-algebra. If $M$ is not a continuous
\JWstar-triple, then the decomposition theory implies a surjective
linear isometry
$\pi \colon N \to→ B \BARotimes  ̅ C$, for some weak*-closed ideal $N$ (a continuous
\JWstar-algebra) of $M$, where $B$ is an abelian von Neumann algebra and $C$
is a Cartan factor in canonical form possessing a unitary tripotent,
$u$, say. In which case, with $v  =  1 \otimes u$ and $w  =  \pi^{-1}(v)$, we arrive
at the contradiction that the type I \JWstar-algebra $B \BARotimes  ̅ C_2(u)$
is Jordan*-isomorphic to $N_2(w)$ which, by
\cite[(5.2) Lemma]{HornNeher},
is a
continuous \JWstar-algebra. It follows that the two forms of `type
I' for \JWstar-algebras also coincide.

We have defined universally reversible \JCstar-triples in the
Introduction. By \cite[\S4]{BunceFeelyTimoney},
when $A$ is
a \JCstar-algebra, $A$ is universally reversible if and only if
$\pi(A)_{sa} $ is a reversible \JC-algebra
(see \cite[2.3.2]{HOS}) for each Jordan *-homomorphism $\pi \colon A
\to \BH$.

It follows from these remarks together with the coordinatisation
theorem \cite[2.8.9]{HOS} that a unital \JCstar-algebra is a
universally reversible \JCstar-triple if it contains, for $3 \leq n
< \infty$, $n$ orthogonal projections with sum 1 and pairwise
exchanged by symmetries.
The following is recorded for later use.

\begin{proposition}
\label{prop:revjcstaralg}
Let $M$ be a \JWstar-algebra such that $M$ is continuous or of type
I$_\infty$ or of type I$_n$ with $3 \leq n < \infty$. Then
$M$ is a universally reversible \JCstar-triple.
\end{proposition}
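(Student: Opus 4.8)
The plan is to reduce to the \JC-algebra criterion already recorded in the excerpt, namely that a \JCstar-algebra $M$ is universally reversible as a \JCstar-triple precisely when $\pi(M)_{sa}$ is a reversible \JC-algebra for every Jordan $*$-homomorphism $\pi\colon M\to\BH$. Thus the whole problem transfers to the self-adjoint part $M_{sa}$, a \JW-algebra, and to showing that every Jordan $*$-homomorphic image of $M$ in $\BH$ has reversible self-adjoint part in the sense of \cite[2.3.2]{HOS}. The three hypotheses (continuous, type I$_\infty$, type I$_n$ with $3\le n<\infty$) should each be handled by exhibiting enough orthogonal projections exchanged by symmetries and invoking the remark immediately preceding the statement.

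First I would treat the type I$_n$ case ($3\le n<\infty$) directly. A \JWstar-algebra of type I$_n$ is, by the structure theory recalled before equation~(\ref{eqn:TypeI}), a direct sum over a homogeneous decomposition, and in each homogeneous summand there is a coordinatising system of $n$ orthogonal abelian projections with sum the identity that are pairwise exchanged by symmetries (this is exactly the data the coordinatisation theorem \cite[2.8.9]{HOS} supplies for type I$_n$, $n\ge 3$). The remark preceding the proposition then gives universal reversibility of each summand, and universal reversibility passes to the $\ell^\infty$-sum because reversibility is checked representation-by-representation and orthogonal ideals split under any triple homomorphism.

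Next I would treat the continuous and type I$_\infty$ cases by reducing them to the finite case via \emph{halving}. A continuous \JW-algebra, and likewise a type I$_\infty$ one, admits a halving: there exist orthogonal projections $p_1,\dots,p_n$ with sum $1$, pairwise exchanged by symmetries, for every finite $n$ (in the continuous case this follows from the comparison theory of \cite{Topping} and \cite[Chapter 5]{HOS}; in the type I$_\infty$ case the properly infinite projection $e$ appearing in~(\ref{eqn:typeIinfinity}) can be halved into $n$ equivalent pieces exchanged by symmetries). Taking any $n$ with $3\le n<\infty$ and applying the same remark yields universal reversibility. I would note that the decomposition~(\ref{eqn:continuousdecomp}), $M_c\cong N\oplus_\infty We$, lets one handle the continuous Jordan part $N$ and the continuous $*$-TRO part $We$ separately; the latter is covered because a continuous von Neumann algebra likewise halves into $n\ge 3$ exchangeable projections.

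\textbf{The main obstacle} is ensuring that the projections produced really are \emph{pairwise exchanged by symmetries} inside $M$ itself, not merely in some enveloping von Neumann algebra, and that the symmetries lie in $M_{sa}$ so that the hypothesis of the preceding remark is literally met. Abelian (or equivalent) projections with the right cardinality exist readily, but manufacturing the exchanging symmetries requires the full strength of the coordinatisation/comparison machinery of \cite{HOS,Topping} in the Jordan setting; verifying that this machinery applies uniformly across the homogeneous summands of the type I part and across both Jordan and TRO pieces of the continuous part is where the care is needed. Once the exchanging symmetries are in hand the conclusion is immediate from the recorded remark, so the proof is essentially a matter of quoting the appropriate structure theorem in each of the three cases and checking that the $\ell^\infty$-decompositions respect universal reversibility.
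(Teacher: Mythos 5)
Your proposal is correct and follows essentially the same route as the paper: the paper's proof simply exhibits $m$ orthogonal projections with sum $1$ pairwise exchanged by symmetries ($m=4$ in the continuous and type I$_\infty$ cases, via the proof of \cite[Theorem 5.3.9]{HOS}, and $m=n$ in the type I$_n$ case) and invokes the remark preceding the statement. Your additional worries about the exchanging symmetries lying in $M$ and about $\ell^\infty$-sums are legitimate but are exactly what the cited coordinatisation and halving machinery of \cite{HOS} delivers, so no new idea is needed.
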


\begin{proof}
$M$ contains $m$ orthogonal projections with sum 1 pairwise exchanged
by symmetries with $m = 4$
if $M$ is continuous or of type I$_\infty$
(see the proof of \cite[Theorem 5.3.9]{HOS}),
and with $m=n$
if $M$ has type I$_n$ for $3 \leq n < \infty$.
\end{proof}

The \emph{universal TRO} \cite{BunceFeelyTimoney}
of a \JCstar-triple $E$ is a pair $(\TROu(E), \utro E)$
consisting of an injective triple homomorphism 
\[
\utro E \colon E \to \TROu(E)
\]
where
$\TROu(E)$ is a TRO generated  by $\utro E(E)$
(as a TRO)
and possessing the universal property that for each triple homomorphism
$\pi \colon E \to \BH$ there is a (unique)
TRO homomorphism $\tilde{\pi} \colon
\TROu(E) \to \BH$ with $\tilde{\pi} \circ \utro E = \pi$.
The \emph{canonical involution} $\Phi \colon \TROu(E) \to \TROu(E)$,
which has order 2, is the unique antiautomorphism of $\TROu(E)$ fixing
each element of $\utro E (E)$.
By \cite{BunceFeelyTimoney}, a \JCstar-triple $E$ is universally
reversible if and only if $\utro E (E) = \{ x \in \TROu(E) : \Phi(x) =
x \}$.
When $A$ is a \JCstar-algebra $(\TROu(A), \utro A)$ coincides with the
universal \Cstar-algebra 
$ (C^*_{\mathrm{J}}(A), \beta_A)$
of $A$ 
\cite[Proposition 3.7]{BunceFeelyTimoney}.

\begin{proposition}
\label{prop:TROimage}
Let $A$ be a \JCstar-algebra and $T$ a TRO and let $\pi \colon A \to
T$ be a surjective linear isometry. Then there is a \Cstar-algebra
$B$, a Jordan *-isomorphism $\phi \colon A \to B$ and a TRO isomorphism
$\psi \colon B \to T$ such that $\pi = \psi \circ \phi$.
\end{proposition}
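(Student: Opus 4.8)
The plan is to exploit the identification of the universal TRO of the \JCstar-algebra $A$ with its universal \Cstar-algebra, together with the fact that the kernel of a TRO homomorphism out of a \Cstar-algebra is automatically a closed two-sided ideal; the quotient is then a genuine \Cstar-algebra and supplies the object $B$.

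First I would record that, being a surjective linear isometry between \JBstar-triples, $\pi$ is a triple isomorphism of $A$ onto $T$. Realising $T$ as a TRO in some $\BH$ and regarding $\pi$ as a triple homomorphism $A \to \BH$, the universal property of $(\TROu(A), \utro A)$ yields a TRO homomorphism $\tilde\pi \colon \TROu(A) \to \BH$ with $\tilde\pi \circ \utro A = \pi$. Since $\TROu(A)$ is generated as a TRO by $\utro A(A)$, its image under $\tilde\pi$ is the TRO generated by $\pi(A) = T$, which is $T$ itself; thus $\tilde\pi \colon \TROu(A) \to T$ is a surjective TRO homomorphism. Here I use the quoted fact that for the \JCstar-algebra $A$ the pair $(\TROu(A), \utro A)$ coincides with $(C^*_{\mathrm{J}}(A), \beta_A)$, so that $\TROu(A)$ is a \Cstar-algebra and $\utro A = \beta_A$ is a Jordan *-monomorphism.

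The heart of the argument is to show that $K := \ker \tilde\pi$, which a priori is only a TRO ideal of the \Cstar-algebra $\TROu(A)$, is in fact a closed two-sided ideal. Writing $\mathcal C = \TROu(A)$, the TRO ideal conditions give $\mathcal C \mathcal C^* K \subseteq K$ and $K \mathcal C^* \mathcal C \subseteq K$; approximating an arbitrary $k \in K$ by $e_\lambda k$ (and by $k e_\lambda$) for a self-adjoint approximate unit $(e_\lambda)$ of $\mathcal C$ and using that $K$ is norm closed, one obtains $\mathcal C K \subseteq K$ and $K \mathcal C \subseteq K$, so $K$ is a closed two-sided ideal. Consequently $B := \mathcal C / K$ is a \Cstar-algebra whose quotient TRO structure agrees with its intrinsic triple structure, and $\tilde\pi$ induces a bijective TRO homomorphism $\psi \colon B \to T$, that is, a TRO isomorphism.

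Finally I would set $\phi := q \circ \utro A \colon A \to B$, where $q \colon \mathcal C \to B$ is the quotient *-homomorphism; as a composition of a Jordan *-homomorphism and a *-homomorphism it is a Jordan *-homomorphism, and $\psi \circ \phi = \tilde\pi \circ \utro A = \pi$. Since $\pi$ and $\psi$ are bijective, $\phi$ is bijective, hence a Jordan *-isomorphism onto $B$, completing the factorisation $\pi = \psi \circ \phi$. The one step demanding care---and the main obstacle---is the identification of the TRO ideal $K$ with a two-sided ideal, since it is precisely this that forces the quotient to be a \Cstar-algebra rather than merely a TRO; everything else is a formal consequence of the universal property and the bijectivity bookkeeping.
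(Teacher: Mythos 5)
Your proposal is correct and follows essentially the same route as the paper: extend $\pi$ to a TRO homomorphism $\tilde\pi$ on $\TROu(A)=C^*_{\mathrm{J}}(A)$, quotient by its kernel to obtain the \Cstar-algebra $B$, and factor $\pi$ through $B$. The only difference is that you prove directly (via an approximate unit) that the TRO-ideal kernel is a two-sided ideal, a fact the paper instead covers by its standing citation of Harris's result that TRO ideals and \Cstar-ideals coincide in a \Cstar-algebra.
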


\begin{proof}
We may suppose that $A \subseteq C^*_{\mathrm{J}}(A)$. By
\cite[Proposition 3.7]{BunceFeelyTimoney} there is a TRO homomorphism
$\tilde{\pi} \colon  C^*_{\mathrm{J}}(A) \to T$ extending $\pi$.
Putting $I = \ker \tilde{\pi}$ we have the commutative diagram
\[
\xymatrix{
A
\ar@{->}[r]^{\pi}
\ar@{^{(}->}[d]
& 
T\\
C^*_{\mathrm{J}}(A) \ar@{->}[ur]^{\tilde{\pi}} \ar@{->}[r]^{q}&
C^*_{\mathrm{J}}(A)/I
\ar@{->}[u]_{\tilde{\pi}_I}
}
\]
where $q$ is the quotient map and $\tilde{\pi}_I$ is the map sending
$x + I$ to $\pi(x)$ for each $x$ in $C^*_{\mathrm{J}}(A)$.
Since
$\pi(A)= T$, $\tilde{\pi}_I$ is a TRO isomorphism
and
since
$\tilde{\pi}_I \circ q$ agrees with $\pi$ on $A$ we have $q(A)
= C^*_{\mathrm{J}}(A)/I$. The assertion follows upon setting $B =
C^*_{\mathrm{J}}(A)/I$, $\psi = \tilde{\pi}_I$ and $\phi$ to be the
restriction of $q$ to $A$.
\end{proof}

In outline an \emph{operator space} is a complex Banach space $E$ together with
a linear isometric embedding into $\BH$ and the resulting operator space
structure on $E$ is determined by the matrix norms  on $M_n(E)$  conferred
thus by the \Cstar-algebras $M_n(\BH)$, for all $n \in ɛ \IN$. A linear map 
$\pi \colon E \to F$
between operator spaces is said to be \emph{completely contractive}
if, for each $n$, $\|\pi_ n\| \leq  1$
where $\pi_n$  is the tensored map $\pi \otimes I_n
\colon M_n(E) \to→ M_n(F)$, and to be \emph{completely isometric}
if each $\pi_n$ is a
surjective isometry. Complete isometries are the isomorphisms in the
category of operator spaces. By a \JC-operator space structure on
a Banach space $E$ we mean an operator space structure determined by a
linear isometry from $E$ onto a \JCstar-subtriple of $\BH$ (in which case,
$E$ is a \JCstar-triple), and by a \JC-operator space we mean a \JCstar-triple $E$
together with a prescribed \JC-operator space structure on E.
By \cite[Proposition 6.2]{BunceFeelyTimoney} the possible \JC-operator
space structures on a \JCstar-triple $E$ arise from
(norm closed) ideals $\mathcal{I}$
of $\TROu(E)$ for which $\utro E(E) \cap \mathcal{I} = \{0\}$, called
\emph{operator space ideals} of $\TROu(E)$.
For each operator space ideal $\mathcal{I}$
of $\TROu(E)$ we have the \JC-operator space
structure,
$E_{\mathcal{I}}$,
on $E$ determined by the isometric embedding $E \to
\TROu(E)/\mathcal{I}$ ($x \mapsto \utro E (x) + \mathcal{I}$).

Given an ideal $I$ of a \JCstar-triple $E$ we recall
\cite[Theorem 3.3]{BunceTimoney3}
that $\TRO(\utro E (I))$ is an ideal of
$\TROu(E)$,
$(\TROu(I), \utro I ) = ( \TRO(\utro E (I)), \utro E |_I )$
and $\TROu(E/I) = \TROu(E)/\TROu(I)$ with
$\utro {E/I}(x+I) = \utro E (x) + \TROu(I)$ for $x \in E$.

\begin{lemma}
\label{lem:opspidealsofideals}
Let $I$ be an ideal of a \JCstar-triple $E$ and $\mathcal{J}$ an ideal
of $\TROu(E)$. Then  $\mathcal{J}$ is an operator space ideal of
$\TROu(I)$
if and only if $\mathcal{J}$ is an operator space ideal of $\TROu(E)$
contained in $\TROu(I)$; in which case $I_{\mathcal{J}}$ is an
operator subspace  of $E_{\mathcal{J}}$.

Thus, $E$ has at least as many distinct \JC-operator space structures
as does $I$.
\end{lemma}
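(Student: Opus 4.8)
The plan is to build everything on the identifications recorded immediately before the statement: that $\TROu(I)$ may be taken to be the ideal $\TRO(\utro E(I))$ of $\TROu(E)$ with $\utro I = \utro E|_I$, and that $\TROu(E)/\TROu(I) = \TROu(E/I)$ with the quotient map carrying $\utro E(x)$ to $\utro{E/I}(x+I)$. The first thing I would extract is the key fact that, for $x \in E$,
\[
\utro E(x) \in \TROu(I) \iff x \in I .
\]
Indeed $\utro E(x)$ lies in $\TROu(I)$ exactly when it is annihilated by the quotient map onto $\TROu(E/I)$, that is, when $\utro{E/I}(x+I) = 0$; since $\utro{E/I}$ is injective this occurs precisely when $x \in I$. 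This is the one genuinely non-formal input, and I expect it to be the crux: it is what lets information about the image of $I$ control the image of all of $E$. Everything else is bookkeeping around the two conditions defining an operator space ideal.

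With that in hand I would settle the equivalence. Since $\TROu(I)$ is an ideal (hence sub-TRO) of $\TROu(E)$, for an ideal $\mathcal{J}$ of $\TROu(E)$ the requirement that $\mathcal{J}$ be an ideal of $\TROu(I)$ is the same as the requirement $\mathcal{J} \subseteq \TROu(I)$; so only the intersection conditions need checking. For the implication from right to left, $\utro I(I) = \utro E(I) \subseteq \utro E(E)$, whence $\utro I(I) \cap \mathcal{J} \subseteq \utro E(E) \cap \mathcal{J} = \{0\}$. For the converse, assume $\mathcal{J}$ is an operator space ideal of $\TROu(I)$, so $\mathcal{J} \subseteq \TROu(I)$ and $\utro I(I) \cap \mathcal{J} = \{0\}$; if $\utro E(x) \in \mathcal{J}$ for some $x \in E$, then $\utro E(x) \in \TROu(I)$ forces $x \in I$ by the key fact, so $\utro E(x) \in \utro I(I) \cap \mathcal{J} = \{0\}$, giving $\utro E(E) \cap \mathcal{J} = \{0\}$.

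For the operator subspace assertion I would note that the inclusion $\TROu(I) \hookrightarrow \TROu(E)$ descends, because $\mathcal{J} \subseteq \TROu(I)$, to a TRO homomorphism $\TROu(I)/\mathcal{J} \to \TROu(E)/\mathcal{J}$ which is injective (its kernel corresponds to $\TROu(I) \cap \mathcal{J} = \mathcal{J}$) and hence a complete isometry onto a sub-TRO of $\TROu(E)/\mathcal{J}$. Since the embedding defining $I_{\mathcal{J}}$ sends $y$ to $\utro E(y) + \mathcal{J}$, it is exactly the restriction to $I$ of the embedding defining $E_{\mathcal{J}}$; thus the matrix norms of $I_{\mathcal{J}}$ agree with those induced on $I$ as a subspace of $E_{\mathcal{J}}$, which is the claim.

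Finally, for the counting statement I would observe that by the equivalence just proved every operator space ideal of $\TROu(I)$ is also an operator space ideal of $\TROu(E)$, and that the assignment $I_{\mathcal{J}} \mapsto E_{\mathcal{J}}$ respects distinctness: if $E_{\mathcal{J}}$ and $E_{\mathcal{J}'}$ carry the same matrix norms on $E$, then restricting to the subspace $I$ and using the operator subspace property shows $I_{\mathcal{J}}$ and $I_{\mathcal{J}'}$ carry the same matrix norms on $I$. Hence distinct \JC-operator space structures on $I$ yield distinct such structures on $E$, so $E$ has at least as many as $I$.
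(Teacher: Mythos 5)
Your proof is correct and follows essentially the same route as the paper's (very terse) argument: the whole lemma reduces to the identity $\utro E(E)\cap \TROu(I)=\utro E(I)=\utro I(I)$, which the paper extracts from the recalled fact $I=E\cap\TRO(I)$ and you re-derive equivalently from the quotient identification $\TROu(E/I)=\TROu(E)/\TROu(I)$ and injectivity of $\utro{E/I}$. The remaining steps (ideals of $\TROu(E)$ inside $\TROu(I)$ are ideals of $\TROu(I)$, injective TRO morphisms are complete isometries, and restriction preserves distinctness of matrix norms) are exactly the bookkeeping the paper leaves implicit.
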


\begin{proof}
We have
\[
\mathcal{J} \cap \TROu(I) \cap \utro E (I)
= \mathcal{J} \cap \utro E (I)
= \mathcal{J} \cap \utro I (I)
\]
and an ideal of $\TROu(I)$ is an ideal of $\TROu(E)$ (see 
\cite[Proposition 3.2]{BunceTimoney3} for example),
from which the assertions follow.
\end{proof}

\section{Reversibility for \JWstar-triples}
\label{sec:Reversibility}
\numberwithin{equation}{section}

The aim of this section is to show that all continuous and type
I$_\infty$ \JWstar-triples are universally reversible
(Theorem~\ref{thm:URifNoTypIfin})
and to use this
to establish reversibility criteria for \JWstar-triples.
In view of the Horn-Neher structure theory
(\ref{eqn:typeIinfinity})
and
(\ref{eqn:continuousdecomp}),
and Proposition~\ref{prop:revjcstaralg}
(and \cite[Proposition 3.6]{BunceFeelyTimoney} which allows passage to
direct sums of finitely many terms)
we may confine our attention to \JWstar-triples of the
form $eW$ where $e$ is a projection in a von Neumann algebra $W$.
Moreover it suffices to deal with the cases where (a) $W$ is
continuous (Propositions \ref{ContIsRev} and
\ref{PropPIG}) and (b) $eWe$ is type I$_\infty$
(Proposition~\ref{PropPIG}).

Given projections $e$ and $f$ in a von Neumann algebra $W$ we write $e
\sim f$ if and only if $e = u u^*$ and $f = u^*u$ for some (partial
isometry) $u \in W$, and $e \precsim f$ if and only if there is a
projection $q$ in $W$ with $e \sim q \leq f$.
In addition for $n \in \IN$ we use the notation
$n \cdot e \precsim f$ to mean the existence of
orthogonal projections $e_1, \ldots , e_n \in W$
with $\sum_{i=1}^n e_i \precsim f$ and $e \sim e_i$ for $i= 1 ,
\ldots, n$.
We note that if $e, f,p$ are projections in $W$ with $e, f \leq p$
and $e \sim f$ in $W$ then $e \sim f$ in $pWp$ since if $u \in W$ with
$e = uu^*$ and $f = u^*u$ the $u = euf \in pWp$.

We shall make frequent use of the properties 
\cite[p. 291]{TakesakiVolI}
that, in a von Neumann algebra, if $e$ and $f$ are projections with 
$e \precsim f$
and
$f \precsim e$
then $e \sim f$, and that if
 $(e_i )_{ i \in I}$
 and
 $(f_i )_{ i \in I}$
 are two families of orthogonal projections
with
$e_i \sim f_i$ for each $i \in I$, then $\sum_{i \in I} e_i \sim \sum_{i
\in I} f_i$.
We refer to
\cite[Chapter V]{TakesakiVolI}
for any undefined terms used below.

\begin{lemma}
\label{SquareRev}
Let $e$ and $f$ be projections in a von Neumann algebra $W$ such that
$e \sim f$. Then $eWf$ is a universally reversible \JCstar -triple.
\end{lemma}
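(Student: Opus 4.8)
The plan is to reduce to the unital case and then invoke the structure theory of Jordan $*$-homomorphisms of \Cstar-algebras. Write $e = uu^*$ and $f = u^*u$ for a partial isometry $u \in W$. I would first show that $eWf$ and $fWf$ are isomorphic as \JCstar-triples. The map $\psi\colon fWf \to eWf$, $\psi(y) = uy$, is well defined (using $uf = u(u^*u) = (uu^*)u = eu$, so $uy = euyf \in eWf$) and has inverse $x \mapsto u^*x$; moreover, for $y_i \in fWf$ a short computation gives $(uy_1)(uy_2)^*(uy_3) = uy_1y_2^*(u^*u)y_3 = uy_1y_2^*y_3 = u\,\trop{y_1}{y_2}{y_3}$, so $\psi(\trop{y_1}{y_2}{y_3}) = \trop{\psi(y_1)}{\psi(y_2)}{\psi(y_3)}$. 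Hence $\psi$ is a TRO isomorphism, in particular a surjective linear isometry. Since universal reversibility is preserved by linear isometries, it suffices to prove that the unital von Neumann algebra $fWf$ is a universally reversible \JCstar-triple.

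Now $fWf$ is a \Cstar-algebra, hence a \JCstar-algebra, so by \cite[\S4]{BunceFeelyTimoney} it is universally reversible precisely when $\phi(fWf)_{sa}$ is a reversible \JC-algebra for every Jordan $*$-homomorphism $\phi\colon fWf \to \BH$. I would therefore fix such a $\phi$ and verify reversibility of its image directly.

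The key tool is the decomposition of Jordan $*$-homomorphisms of \Cstar-algebras (see \cite{HOS}): there is a central projection $z$ in the von Neumann algebra generated by $\phi(fWf)$ such that $x \mapsto z\phi(x)$ is a $*$-homomorphism and $x \mapsto (1-z)\phi(x)$ is a $*$-antihomomorphism. Given self-adjoint $a_i = \phi(x_i)$ with $x_i \in (fWf)_{sa}$, multiplying out and using that $z$ is central together with the (anti)multiplicativity of the two pieces yields
\[
a_1 a_2 \cdots a_n + a_n \cdots a_1 = \phi\bigl(x_1 x_2 \cdots x_n + x_n \cdots x_1\bigr).
\]
Since $fWf$ is associative, the argument of $\phi$ is a self-adjoint element of $fWf$, so the left-hand side lies in $\phi(fWf)_{sa}$. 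Thus $\phi(fWf)_{sa}$ is reversible, completing the argument.

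The substance is concentrated in this reduction and the Jordan-homomorphism computation. The main obstacle, as I see it, is twofold: first, ensuring that the hypothesis $e \sim f$ is genuinely used -- it is exactly what makes $eWf$ isometric to the \emph{unital} algebra $fWf$, without which $eWf$ could be a Hilbert space and so fail universal reversibility; second, correctly matching the triple notion of reversibility, involving the products $a_1 a_2^* a_3 \cdots a_{2n}^* a_{2n+1}$, with the Jordan reversibility of self-adjoint products supplied by the bridge result \cite[\S4]{BunceFeelyTimoney}.
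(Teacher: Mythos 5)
Your proof is correct, and its first step coincides with the paper's: transporting $eWf$ onto the unital von Neumann algebra $fWf$ via $y\mapsto uy$ (a TRO isomorphism, hence a surjective linear isometry, so universal reversibility transfers). Where you diverge is in establishing that the von Neumann algebra $fWf$ is a universally reversible \JCstar-triple: the paper disposes of this by citing \cite[Lemma 3.4 and Theorem 4.6]{AlfsenHOSchultzActa1980} together with the bridge in \cite[\S4]{BunceFeelyTimoney}, whereas you prove it directly from St{\o}rmer's decomposition theorem --- a Jordan $*$-homomorphism $\phi$ of a \Cstar-algebra splits, via a central projection $z$ of the von Neumann algebra generated by $\phi(fWf)$, into a $*$-homomorphism $z\phi$ and a $*$-antihomomorphism $(1-z)\phi$ --- and your identity
\[
a_1a_2\cdots a_n + a_n\cdots a_2a_1 = \phi\bigl(x_1x_2\cdots x_n + x_n\cdots x_2x_1\bigr)
\]
checks out, with the right-hand argument self-adjoint in the associative algebra $fWf$. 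This is a legitimate, essentially self-contained substitute for the citation (and is in fact the standard mechanism underlying reversibility of Jordan-homomorphic images of \Cstar-algebras); the only thing to tighten is the reference, since the decomposition theorem should be cited precisely (it is St{\o}rmer's theorem, and the form you use requires the domain to be a full associative \Cstar-algebra, which $fWf$ is). Your closing observation about where $e\sim f$ is genuinely needed --- without it $eWf$ could be a column Hilbert space of dimension $\geq 3$ and the lemma would fail --- is also exactly the right point to flag.
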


\begin{proof}
Given $u \in W$ with $e = u^*u$ and $f = uu^*$, the map $x \mapsto ux$
is a TRO isomorphism
from $eWf$ onto the von Neumann algebra $fWf$,
which  is universally reversible as a \JCstar-triple
by \cite[Lemma 3.4 and Theorem 4.6]{AlfsenHOSchultzActa1980}
(together with \cite[\S4]{BunceFeelyTimoney}).
\end{proof}

The following folklore is included for want of an exact
reference.

\begin{lemma}
\label{LemPID}
Let $e$  and $f$ be projections in 
a von-Neumann algebra with $e
\precsim f$. Then there is a projection $p \in W$ with $e \leq p \sim f$.
\end{lemma}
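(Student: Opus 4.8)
The plan is to unwind the definition of $e \precsim f$: it gives a projection $q$ with $e \sim q \leq f$, and setting $r = f - q$ the task is to enlarge $e$ to a projection $p$ with $e \le p \sim f$. The naive attempt is to locate a projection $s$ orthogonal to $e$ with $s \sim r$ and put $p = e + s$; since $e \perp s$ and $q \perp r$, additivity of equivalence would then yield $p = e + s \sim q + r = f$. The step I expect to be the main obstacle is precisely that such an $s$ need not exist: in infinite algebras $r$ may fail to be subequivalent to $1 - e$ (for instance when $e = 1$ while $q < f$), so the excess $r$ cannot always be housed orthogonally to $e$ and must sometimes be absorbed into $e$ itself. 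Overcoming this is the whole content of the lemma.

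To deal with this I would split the problem by the comparison theorem \cite[Chapter V]{TakesakiVolI}, producing a central projection $z$ with $z r \precsim z(1-e)$ and $(1-z)(1-e) \precsim (1-z) r$, and treating the two central summands separately. Throughout I would use only the two standard facts already quoted in the text, namely Schröder--Bernstein (if $a \precsim b$ and $b \precsim a$ then $a \sim b$) and additivity of equivalence over orthogonal families, together with the trivial observation that multiplying the equivalence $e \sim q$ by a central projection preserves it, so that one may pass freely between $W$ and its central summands.

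On the summand $z$, where room is available, the naive construction succeeds: choose $s \leq z(1-e)$ with $s \sim z r$ and set $p_1 = z e + s$, so that $p_1 \geq z e$ and, by additivity, $p_1 \sim z q + z r = z f$. On the complementary summand $z' = 1 - z$, where $z'(1-e) \precsim z' r$, I would instead show that the entire unit is absorbed: picking $r_0 \leq z' r$ with $z'(1-e) \sim r_0$ and noting $r_0 \perp z' q$, additivity gives $z' = z' e + z'(1-e) \sim z' q + r_0 \leq z' f$, whence $z' \precsim z' f$; as $z' f \precsim z'$ trivially, Schröder--Bernstein forces $z' \sim z' f$, and I take $p_2 = z'$. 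Finally $p = p_1 + p_2$ is a projection, the two summands being orthogonal as they lie under $z$ and $z'$ respectively, with $p \geq z e + z' e = e$ and, by one last application of additivity, $p \sim z f + z' f = f$, which is the required conclusion.
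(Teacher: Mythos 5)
Your proof is correct and takes essentially the same route as the paper's: both apply the comparison theorem to $1-e$ versus $f-q$, handle the ``roomy'' central summand by adjoining to $e$ an orthogonal copy of the excess, and handle the ``tight'' summand by showing the entire central piece is equivalent to its product with $f$ via the Schr\"oder--Bernstein property. The only difference is which of the two central summands you label $z$.
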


\begin{proof}
Let $q$ be a projection in $W$ such that
$e \sim q \leq f$. By the
comparability theorem 
(\cite[p. 293]{TakesakiVolI}
there is a central
projection
$z$ of $W$ such that $(1-e)z \precsim (f-q)z$ and $(f-q)(1-z)
\precsim
(1-e)(1-z)$.
Thus, using $ez \sim qz$,
\[
z = (1-e)z + ez \precsim (f-q)z + qz = fz \leq z,
\]
giving $z \sim fz$.

Since there is a projection $r$ in $W$ with
$(f-q)(1-z) \sim r \leq (1-e)(1-z)$,
so that
\[
f(1-z) = (f-q)(1-z) + q(1-z) \sim r +e(1-z)
\]
we have
\[
f = fz + f(1-z) \sim z + (r +e(1-z)) = e + (1-e)z + r \geq e,
\]
as required.
\end{proof}

\begin{lemma}
\label{subdivideTripotents}
Let $e$ be a projection in a continuous von Neumann algebra $W$, let
$u$ be a tripotent in $eW$, and
let $n \in \IN$.
Then there exist orthogonal
tripotents $u_1, \ldots, u_n \in eW$ with $u = \sum_{i=1}^n u_i$
and $n \cdot (u_i^* u_i) \precsim e$ for $i = 1, \ldots, n$.
\end{lemma}

\begin{proof}
Let $f = u u^*$. We may choose orthogonal projections $f_1,
\ldots, f_n$ in $W$ with
$f =
\sum_{i=1}^n f_i$ with $f_i \sim f$ for $i = 1, \ldots, n$
(see \cite[6.5.6]{KadisonRingroseII}, for instance).
Now letting $u_i = f_i u$ for $i = 1, \ldots, n$,
we have that $u_1, \ldots, u_n$
are orthogonal tripotents
with
$u = u_1 + \cdots + u_n$ and
$u_i^* u_i \sim u_i u_i^* = f_i \leq e$
for $i = 1, \ldots, n$
giving  $n \cdot (u_i^* u_i) \precsim e$.
\end{proof}

\begin{lemma}
\label{SupOfNSmallProjections}
Let $e$ be a finite projection in a continuous von Neumann algebra
$W$ and let $f_1, f_2, \ldots, f_n$ be projections in $W$ with $n
\cdot f_i \precsim e$ for $i = 1, \ldots, n$. Then $\bigvee_{i=1}^n
f_i \precsim e$.
\end{lemma}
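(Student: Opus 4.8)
The plan is to reduce to a \emph{finite} von Neumann algebra and then translate everything into an inequality for the center-valued trace. First I would set $s = e \vee f_1 \vee \cdots \vee f_n$. Since $e$ is finite and each $f_i \precsim e$ (taking a single copy in $n \cdot f_i \precsim e$), every $f_i$ is finite; as the supremum of finitely many finite projections is again finite (via Kaplansky's formula $f \vee g - g \sim f - f \wedge g$), $s$ is a finite projection. Replacing $W$ by $sWs$ I may therefore assume $W$ is finite: by the remark preceding Lemma~\ref{SquareRev}, for subprojections of $s$ the implementing partial isometries already lie in $sWs$, so the relations $\sim$ and $\precsim$ among subprojections of $s$ are unchanged. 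Moreover $n \cdot f_i \precsim e$ provides a projection $g \leq e \leq s$ that splits into $n$ orthogonal copies of $f_i$, so this hypothesis survives the reduction, as does the conclusion $\bigvee_{i=1}^n f_i \precsim e$ to be proved (both sides lying under $s$).

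Now let $T \colon W \to Z(W)$ be the faithful normal center-valued trace of the finite von Neumann algebra $W$. I would invoke the standard facts (see \cite[Chapter V]{TakesakiVolI}) that $T$ is additive on orthogonal projections and that, for projections $p$ and $q$, one has $p \precsim q$ if and only if $T(p) \leq T(q)$. The hypothesis $n \cdot f_i \precsim e$ then reads $n\, T(f_i) \leq T(e)$, that is, $T(f_i) \leq \tfrac1n T(e)$ for each $i$.

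It remains to bound the trace of the join. Kaplansky's parallelogram law gives, for any two projections, $T(p \vee q) = T(p) + T(p \vee q - p) = T(p) + T(q - p \wedge q) \leq T(p) + T(q)$; iterating yields the subadditivity $T\!\left(\bigvee_{i=1}^n f_i\right) \leq \sum_{i=1}^n T(f_i)$. Combining this with the previous estimate,
\[
T\!\left(\bigvee_{i=1}^n f_i\right) \;\leq\; \sum_{i=1}^n T(f_i) \;\leq\; \sum_{i=1}^n \tfrac1n T(e) \;=\; T(e),
\]
whence $\bigvee_{i=1}^n f_i \precsim e$ by the trace criterion.

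I expect the only genuinely delicate point to be the reduction to the finite case, specifically the verification that $n \cdot f_i \precsim e$ and the target relation are faithfully inherited by $sWs$; once $T$ is available the remaining computation is routine. Note that continuity of $W$ is not actually needed beyond providing the ambient setting of the section, since the argument rests only on the finiteness of $e$.
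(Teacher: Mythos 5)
Your proof is correct and follows essentially the same route as the paper's: both pass to the finite corner generated by $e \vee \bigvee_i f_i$, invoke the centre-valued trace there, convert $n\cdot f_i \precsim e$ into $T(f_i) \leq \tfrac1n T(e)$ via the trace comparison criterion, and finish with subadditivity of the trace on joins (Kaplansky's formula). The only difference is that you spell out the reduction to the finite corner and the subadditivity step in more detail than the paper does.
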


\begin{proof}
Let $f = \bigvee_{i=1}^n
f_i$ and $g = e \vee f$
(finite by \cite[Theorem V.1.37]{TakesakiVolI})
and let $\cvtrace$ be the centre-valued trace
on the finite von Neumann algebra
$gWg$.
Using
\cite[Corollary V.2.8]{TakesakiVolI} we have
$\cvtrace(f_i) \leq (1/n) \cvtrace(e)$ for $i = 1, \ldots, n$
so that
$\cvtrace(f) \leq \sum_{i=1}^n \cvtrace(f_i) \leq \cvtrace(e)$ and
hence $f \precsim e$.
\end{proof}

\begin{lemma}
\label{lem:ntripotents}
Let $u_1, \ldots, u_n$ be tripotents in $eW$ where $e$ is a finite
projection in
a  continuous von Neumann algebra
$W$
such that $n \cdot (u_i^* u_i) \precsim e$
 for $1 \leq i \leq n$.
Then there is a universally reversible \JCstar-subtriple of $eW$
containing $u_1, \ldots, u_n$.
\end{lemma}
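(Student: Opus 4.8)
The plan is to exhibit the $u_i$ inside a single rectangular corner $eWq$ of $W$ with $q \sim e$, since then universal reversibility is immediate from Lemma~\ref{SquareRev} and the inclusion $eWq \subseteq eW$ makes $eWq$ automatically a \JCstar-subtriple of $eW$. In this way the whole task reduces to producing one projection $q$ that simultaneously dominates the right supports of all the $u_i$ and is equivalent to $e$.

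Concretely, I would first record the elementary support identities for a tripotent: writing $g_i = u_i^* u_i$, we have $u_i g_i = u_i$, while membership of $u_i$ in $eW$ gives $e u_i = u_i$, so that $u_i = e u_i g_i$. The next step is to control the join of the right supports. The hypothesis says exactly that $n \cdot g_i \precsim e$ for each $i$, and $e$ is a finite projection in the continuous algebra $W$, so Lemma~\ref{SupOfNSmallProjections} applies verbatim to the family $g_1, \ldots, g_n$ and yields $q_0 := \bigvee_{i=1}^n g_i \precsim e$. Feeding $q_0 \precsim e$ into Lemma~\ref{LemPID} then produces a projection $q$ with $q_0 \le q \sim e$.

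It remains to check that $eWq$ does the job. Since $q \sim e$, Lemma~\ref{SquareRev} gives that $eWq$ is universally reversible, and $eWq \subseteq eW$ exhibits it as a \JCstar-subtriple of $eW$. For containment, $g_i \le q_0 \le q$ gives $u_i q = u_i g_i q = u_i$, so together with $e u_i = u_i$ we obtain $u_i = e u_i q \in eWq$, as required. The only genuine work is the control of $\bigvee_i g_i$, i.e. the passage from the $n$-fold subequivalences $n \cdot g_i \precsim e$ to $\bigvee_i g_i \precsim e$; this is precisely what Lemma~\ref{SupOfNSmallProjections} supplies, and is where finiteness of $e$, continuity of $W$, and the factor $n$ in the hypothesis are all consumed. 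Everything after that is the routine balancing of left and right supports through Lemma~\ref{LemPID}, engineered so that the square-corner Lemma~\ref{SquareRev} can be invoked.
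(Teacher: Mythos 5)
Your proposal is correct and is essentially identical to the paper's proof: both obtain $\bigvee_i u_i^*u_i \precsim e$ from Lemma~\ref{SupOfNSmallProjections}, upgrade this to a projection $q$ with $\bigvee_i u_i^*u_i \le q \sim e$ via Lemma~\ref{LemPID}, and conclude that the $u_i$ lie in the universally reversible corner $eWq$ supplied by Lemma~\ref{SquareRev}.
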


\begin{proof}
By 
Lemma~\ref{SupOfNSmallProjections}
together with
Lemma~\ref{LemPID}
there is a projection $f$ in $W$ with
$\bigvee_{i=1}^n u_i^* u_i \leq f \sim e$
so that each
$u_i \in e Wf$ which, by 
Lemma~\ref{SquareRev}, is a universally reversible \JCstar-triple.
\end{proof}

\begin{proposition}
\label{ContIsRev}
Let $e$ be a finite projection in a continuous von Neumann algebra
$W$. Then $eW$ is a universally reversible \JCstar -triple.
\end{proposition}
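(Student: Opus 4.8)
The plan is to verify directly that $\pi(eW)$ is reversible in $\BH$ for an arbitrary triple homomorphism $\pi \colon eW \to \BH$, exploiting that reversibility is a condition on finitely many elements and that the reversible expression is additive in each of its arguments. First I would reduce to the case of tripotents. Fix $a_1, \ldots, a_{2n+1} \in \pi(eW)$ with $n \geq 2$ and write $a_i = \pi(x_i)$ with $x_i \in eW$. Since $eW$ is a \JWstar-triple its tripotents have norm-dense linear span, the assignment $(x_1, \ldots, x_{2n+1}) \mapsto \pi(x_1)\pi(x_2)^* \cdots \pi(x_{2n+1}) + \pi(x_{2n+1}) \cdots \pi(x_1)$ is separately continuous, and $\pi(eW)$ is closed (being isometric to $eW/\ker \pi$); so it will be enough to prove that this reversible element lies in $\pi(eW)$ whenever each $x_i$ is a tripotent of $eW$.

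Next I would subdivide and expand. Given tripotents $x_1, \ldots, x_{2n+1}$ in $eW$, I apply Lemma~\ref{subdivideTripotents} with parameter $2n+1$ to write each $x_i = \sum_{l=1}^{2n+1} x_{i,l}$ as a sum of orthogonal tripotents with $(2n+1)\cdot (x_{i,l}^* x_{i,l}) \precsim e$. Because each of the two monomials making up the reversible element is additive in every slot and $\pi(x_i) = \sum_l \pi(x_{i,l})$, I can expand the reversible element of $\pi(x_1), \ldots, \pi(x_{2n+1})$ as a \emph{finite} sum, over multi-indices $(l_1, \ldots, l_{2n+1})$, of the reversible elements of the tuples $\pi(x_{1,l_1}), \ldots, \pi(x_{2n+1,l_{2n+1}})$.

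Then I would handle each term separately. For a fixed multi-index the $2n+1$ tripotents $x_{1,l_1}, \ldots, x_{2n+1,l_{2n+1}}$ satisfy the hypothesis of Lemma~\ref{lem:ntripotents} (with the $n$ there taken to be $2n+1$), so they all lie in a universally reversible \JCstar-subtriple $F$ of $eW$. The restriction $\pi|_F$ is a triple homomorphism on a universally reversible triple, hence $\pi(F)$ is reversible in $\BH$ and the corresponding reversible element lies in $\pi(F) \subseteq \pi(eW)$. As $\pi(eW)$ is a linear subspace, the finite sum of these terms again lies in $\pi(eW)$, which completes the verification.

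The crux will be organising the reduction so that the mass condition of Lemma~\ref{lem:ntripotents} can actually be met. Subdividing each tripotent into exactly as many pieces as the arity $2n+1$ is precisely what is needed: after the multilinear expansion each individual term involves only $2n+1$ tripotents---a number independent of how finely one subdivides---and the subdivision guarantees $(2n+1)\cdot (x_{i,l}^* x_{i,l}) \precsim e$, which is exactly the hypothesis letting Lemmas~\ref{SupOfNSmallProjections}, \ref{LemPID} and \ref{SquareRev} (packaged as Lemma~\ref{lem:ntripotents}) place those $2n+1$ tripotents in a single universally reversible corner $eWf$ with $f \sim e$. A naive attempt to fit all the original elements into one such corner would fail, since the supremum of their supports can exceed $e$; it is the multilinear expansion that keeps the number of elements per term fixed and small, and the finiteness of $e$ that makes the small supports controllable.
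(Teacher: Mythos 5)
Your proposal is correct and follows essentially the same route as the paper: reduce to tripotents by density and multilinearity, subdivide each tripotent into $2n+1$ pieces via Lemma~\ref{subdivideTripotents}, expand multilinearly, and place each resulting $(2n+1)$-tuple inside a single universally reversible corner via Lemma~\ref{lem:ntripotents}. Your closing paragraph correctly identifies the point of the construction, namely that the subdivision keeps the number of tripotents per term equal to the arity so that the mass condition $n\cdot(u_{i,j}^*u_{i,j})\precsim e$ can be met.
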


\begin{proof}
Let $\pi \colon eW \to \BH$
be a triple homomorphism
and let
$a_1, \ldots, a_n \in \pi(eW)$ for an odd integer $n \geq 3$.
We must show that
\begin{equation}
\label{eqn10REV}
\trop{a_1}{\cdots}{a_n} +
\trop{a_n}{\cdots}{a_1} 
\in \pi(eW).
\end{equation}
Choose $x_i \in eW$ such that $\pi(x_i) = a_i$, for $1 \leq i \leq
n$.
In order to establish
(\ref{eqn10REV}),
since $eW$
is the norm closed linear span of its tripotents,
we may suppose without loss
that each $x_i$ is a tripotent.
In which case,
by Lemma~\ref{subdivideTripotents},
we may write $x_i = \sum_{j=1}^n u_{i,j}$ where the $u_{i,j}$ are
tripotents in $eW$ with
$n \cdot (u_{i,j}^* u_{i,j}) \precsim e$
for $1 \leq i, j \leq n$. Put $a_{i,j} = \pi(u_{i,j})$ for $1 \leq i,
j \leq n$.
By Lemma~\ref{SquareRev},
for each choice of $j_1, \ldots, j_n$ in $\{ 1, \ldots, n\}$ there is
a universally reversible \JCstar-subtriple $M$ of $eW$ containing
$u_{1, j_1}, \ldots, u_{n, j_n}$ from which it follows that
\[
\trop{a_{1, j_1}}{\cdots}{a_{n,j_n}} +
\trop{a_{n, j_n}}{\cdots}{a_{1, j_1}} 
\in \pi(M) \subseteq \pi(eW).
\]
By summing over all such choices of $j_1, \ldots, j_n$ 
we obtain (\ref{eqn10REV}).
\end{proof}

We next turn to the more pliable properly infinite projections $e$ in
a von Neumann algebra $W$.
In this case we recall
\cite[Proposition V.1.36]{TakesakiVolI}
there are projections $p$ and $q$ in $W$ with $e = p+q \sim p \sim q$,
so that if $f_1$ and $f_2$ are projections with
$f_1 \precsim e$ 
and
$f_2 \precsim e$ 
then
$f_1 \vee f_1 - f_2 \sim f_1 - f_1 \wedge f_2 \precsim p$ and $f_2
\precsim q$ giving
$f_1 \vee f_2 \precsim p+q =e$.
The next assertion is an immediate consequence.

\begin{lemma}
\label{LemPIE}
If $f_1, \ldots, f_n$ and $e$ are projections in a 
von-Neumann algebra $W$ where $f_i \precsim e$ for $1 \leq i \leq
n$ and $e$ is properly infinite 
then $\bigvee_{i=1}^n f_i \precsim e$.
\end{lemma}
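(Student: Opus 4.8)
The plan is to prove the statement by induction on $n$, using the two-projection case established in the paragraph immediately preceding the lemma. That paragraph shows that when $e$ is properly infinite and $g_1 \precsim e$, $g_2 \precsim e$, then $g_1 \vee g_2 \precsim e$; the whole task here is to bootstrap this to a finite join.

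First I would dispose of the base case $n = 1$, which is simply the hypothesis $f_1 \precsim e$. For the inductive step, assume the result for $n-1$ projections, so that $g := \bigvee_{i=1}^{n-1} f_i \precsim e$. Writing
\[
\bigvee_{i=1}^n f_i = g \vee f_n,
\]
I would then apply the two-projection case with $g_1 = g$ and $g_2 = f_n$: both satisfy $g \precsim e$ (by the inductive hypothesis) and $f_n \precsim e$ (by hypothesis), and $e$ is properly infinite, so the cited argument gives $g \vee f_n \precsim e$, completing the induction.

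The only point that needs a moment's attention is that the two-projection result is invoked with the running join $g$ in the role of one of the projections; this is legitimate precisely because the inductive hypothesis guarantees $g \precsim e$, and the hypothesis that $e$ is properly infinite is unchanged throughout. There is no genuine obstacle here, which is why the paper records the statement as an immediate consequence of the preceding remark: the properly infinite case is far more flexible than the finite case (Lemma~\ref{SupOfNSmallProjections}), requiring no centre-valued trace estimate and no auxiliary smallness condition such as $n \cdot f_i \precsim e$.
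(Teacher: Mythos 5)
Your proof is correct and matches the paper's intent exactly: the paper records the lemma as ``an immediate consequence'' of the two-projection case established in the preceding paragraph, and the induction you spell out is precisely the intended bootstrap. Nothing is missing.
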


For an element $x$ in a von-Neumann algebra $W$ the range projection
$r(x^*x)$ of $x^*x$ is the least projection $p$ in $W$ with $xp=x$,
and $r(x^*x) \sim r(xx^*)$
\cite[Proposition V.1.5]{TakesakiVolI}.

\begin{lemma}
\label{LemPIF}
Let $S$ be a finite subset of
$eW$ where 
$e$ is a properly infinite projection in the von-Neumann algebra $W$.
Then $S$ is contained in a universally reversible \JCstar-subtriple of
$eW$.
\end{lemma}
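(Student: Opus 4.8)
The plan is to enclose all of $S$ in a single rectangular corner $eWf$ with $f \sim e$, which is universally reversible by Lemma~\ref{SquareRev}, thereby reducing the properly infinite case directly to the already-settled square case. Write $S = \{x_1, \ldots, x_m\}$. The left supports of the $x_i$ are controlled automatically: since $x_i \in eW$ we have $e x_i = x_i$, so $r(x_i x_i^*) \leq e$, and using $r(x_i^* x_i) \sim r(x_i x_i^*)$ we obtain $r(x_i^* x_i) \precsim e$ for each $i$. Thus the only projections that need to be corralled are the right supports $r(x_i^* x_i)$.

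The crucial step is to dominate their join by a single projection equivalent to $e$. Put $f' = \bigvee_{i=1}^m r(x_i^* x_i)$. Because $e$ is properly infinite and each $r(x_i^* x_i) \precsim e$, Lemma~\ref{LemPIE} yields $f' \precsim e$ in one stroke; this is precisely where proper infiniteness does the work, sparing us the subdivision of tripotents and the centre-valued trace estimate that the finite continuous case required. Applying Lemma~\ref{LemPID} to $f' \precsim e$ then produces a projection $f \in W$ with $f' \leq f \sim e$.

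It remains to assemble the conclusion. Since $r(x_i^* x_i) \leq f' \leq f$ we have $x_i f = x_i$, so $x_i \in eWf$ for every $i$ and hence $S \subseteq eWf$. As $eWf$ is a TRO, it is closed under the triple product, and being contained in $eW$ it is a \JCstar-subtriple of $eW$; finally $f \sim e$ makes it universally reversible by Lemma~\ref{SquareRev}. I do not expect a genuine obstacle here: once Lemma~\ref{LemPIE} delivers the single dominating projection $f'$, the argument is a short reduction to the square case, and the only point needing a line of care is the routine verification that $eWf$ is a subtriple of $eW$.
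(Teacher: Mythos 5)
Your proof is correct and follows essentially the same route as the paper: control the right supports $r(x_i^*x_i)\precsim e$ via $r(x_i^*x_i)\sim r(x_ix_i^*)\le e$, join them with Lemma~\ref{LemPIE}, enlarge to $f\sim e$ with Lemma~\ref{LemPID}, and conclude that $S\subseteq eWf$, which is universally reversible by Lemma~\ref{SquareRev}. No substantive differences.
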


\begin{proof}
Let $x_1, \ldots, x_n$ be the elements of $S$.
For $1 \leq i \leq n$ we have $r(x_i^* x_i) \sim r (x_i x_i^*) \leq e$
so that by Lemmas~\ref{LemPID} and \ref{LemPIE}
there is a projection $f$ in
$W$ with $\bigvee_{i=1}^n r(x_i^* x_i) \leq f \sim e$. Hence for $1
\leq i \leq n$, $x_i \in eW \cap Wf = eWf$, whence the result by
Lemma~\ref{SquareRev}.
\end{proof}

\begin{proposition}
\label{PropPIG}
Let $e$ be a
projection in a von-Neumann algebra $W$
such that (a) $e$ is
properly infinite or (b) $W$ is continuous.
Then $eW$ is
a universally reversible \JCstar -triple.
\end{proposition}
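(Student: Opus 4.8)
The plan is to treat the two hypotheses in turn, proving (a) first and then reducing (b) to it. For (a) the decisive tool is Lemma~\ref{LemPIF}, which already encloses any finite subset of $eW$ in a universally reversible subtriple; the only supplementary remark needed is that reversibility is a finitary condition. For (b) the plan is to split $e$ into its finite and properly infinite central parts, dispatch the finite part by Proposition~\ref{ContIsRev} and the properly infinite part by case (a), and then reassemble using the stability of universal reversibility under finite direct sums \cite[Proposition 3.6]{BunceFeelyTimoney}.

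For case (a) let $\pi \colon eW \to \BH$ be a triple homomorphism. To prove $\pi(eW)$ reversible it suffices, for every odd $n \geq 3$ and all $a_1, \ldots, a_n \in \pi(eW)$, to show that $\trop{a_1}{\cdots}{a_n} + \trop{a_n}{\cdots}{a_1} \in \pi(eW)$. I would lift to $x_1, \ldots, x_n \in eW$ with $\pi(x_i) = a_i$ and apply Lemma~\ref{LemPIF} to the finite set $\{x_1, \ldots, x_n\}$, obtaining a universally reversible \JCstar-subtriple $M$ of $eW$ containing each $x_i$. The restriction $\pi|_M \colon M \to \BH$ is again a triple homomorphism, so $\pi(M)$ is reversible in $\BH$; since each $a_i = \pi(x_i) \in \pi(M)$, the displayed element lies in $\pi(M) \subseteq \pi(eW)$, as required.

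For case (b), with $W$ continuous and $e$ arbitrary, I would invoke the central decomposition of $e$: there is a central projection $z$ of $W$ with $ez$ finite and $e(1-z)$ properly infinite (take $z$ to be the supremum of all central $z'$ with $ez'$ finite, noting a supremum of two such is finite by the parallelogram law and normality). Because $z$ is central the subtriples $ezW$ and $e(1-z)W$ are mutually orthogonal and every element of $eW$ decomposes accordingly, so $eW = ezW \oplus_\infty e(1-z)W$. Here $ez$ is a finite projection in the continuous algebra $W$, so $ezW$ is universally reversible by Proposition~\ref{ContIsRev}, while $e(1-z)W$ is universally reversible by case (a); hence $eW$ is universally reversible by \cite[Proposition 3.6]{BunceFeelyTimoney}. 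The only genuinely structural input is this finite/properly-infinite splitting, and I expect the one point requiring care to be the verification that the maximal choice of $z$ leaves $e(1-z)$ with no nonzero finite central subprojection, so that case (a) indeed applies; everything else is assembly of the quoted results.
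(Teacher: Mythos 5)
Your proof is correct and follows essentially the same route as the paper: case (a) by lifting the finitely many elements and enclosing them in a universally reversible subtriple via Lemma~\ref{LemPIF}, and case (b) by combining Proposition~\ref{ContIsRev} with case (a). The only difference is that the paper leaves the finite/properly-infinite central splitting of $e$ implicit, whereas you spell it out (correctly).
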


\begin{proof}
If $e$ is properly infinite and $S$ is a finite subset of $\pi(eW)$,
where $\pi \colon eW \to \BH$ is a triple homomorphism, then
Lemma~\ref{LemPIF} implies the existence of a universally reversible
\JCstar-subtriple $M$ of $eW$ such that $S \subset \pi(M)$, implying
that the latter is reversible in $\BH$ and proving the result in case
(a).
The remaining case follows from this together with
Proposition~\ref{ContIsRev}.
\end{proof}

We now state and prove the main result of this section.

\begin{theorem}
\label{thm:URifNoTypIfin}
Every \JWstar-triple with zero type I$_{\mathrm{finite}}$ part is
a universally reversible \JCstar-triple.
\end{theorem}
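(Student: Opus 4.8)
The plan is to reduce the assertion to the building blocks furnished by the Horn--Neher structure theory and to assemble universal reversibility from the propositions already proved in this section, using that universal reversibility is preserved by linear isometry. First I would apply the decomposition (\ref{eqn:typeIpluscontinuous}) to write $M \cong M_1 \oplus_\infty M_c$ with $M_1$ of type I and $M_c$ continuous, so that it suffices to treat the two summands separately. The hypothesis that $M$ has zero type I$_{\mathrm{finite}}$ part says precisely that every Cartan factor $C_j$ occurring in the decomposition (\ref{eqn:TypeI}) of $M_1$ has infinite rank; that is, $M_1$ is of type I$_\infty$. (No hypothesis is needed on $M_c$, since every continuous \JWstar-triple will turn out to be universally reversible.)

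Next I would put each summand into its structural normal form. By (\ref{eqn:typeIinfinity}) we have $M_1 \cong N \oplus_\infty W_1 e_1$, where $N$ is a type I$_\infty$ \JWstar-algebra and $e_1$ is a properly infinite projection in a type I$_\infty$ von Neumann algebra $W_1$; and by (\ref{eqn:continuousdecomp}) we have $M_c \cong N' \oplus_\infty W_2 e_2$, where $N'$ is a continuous \JWstar-algebra and $e_2$ is a projection in a continuous von Neumann algebra $W_2$. The two algebra summands are handled by Proposition~\ref{prop:revjcstaralg}: $N$ is universally reversible as a type I$_\infty$ \JWstar-algebra and $N'$ as a continuous one. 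The two corner summands are handled by Proposition~\ref{PropPIG}, via case (a) for $W_1 e_1$ (with $e_1$ properly infinite) and case (b) for $W_2 e_2$ (with $W_2$ continuous).

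Here one small adjustment is required, because the lemmas of this section are stated for the left ideals $eW$ while the structure forms (\ref{eqn:typeIinfinity}) and (\ref{eqn:continuousdecomp}) produce right ideals $We$. I would pass to the opposite von Neumann algebra $W^{\OP}$: the right ideal $We$ is then the left ideal $e W^{\OP}$, and since the Jordan triple product $\tp a b c$ is insensitive to the reversal of the associative product this is literally the same \JCstar-triple, while $W^{\OP}$ has the same projection lattice and the same type as $W$ (Murray--von Neumann equivalence, and hence proper infiniteness, being symmetric relations unaffected by the order reversal). Thus Proposition~\ref{PropPIG} applies to $e_1 W_1^{\OP}$ and $e_2 W_2^{\OP}$ verbatim. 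Finally, the four normal forms assemble into a single finite $\oplus_\infty$ direct sum, so \cite[Proposition 3.6]{BunceFeelyTimoney} promotes the universal reversibility of the summands to that of $M$.

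Since the analytic content has already been discharged in Propositions~\ref{ContIsRev} and \ref{PropPIG} and in Proposition~\ref{prop:revjcstaralg}, I do not anticipate a genuine obstacle at this stage; the matters needing care are organisational, namely confirming that the vanishing of the type I$_{\mathrm{finite}}$ part forces $M_1$ to be exactly type I$_\infty$ (so that $e_1$ is genuinely properly infinite), checking that the collected decomposition has only finitely many summands so that the finite-sum result \cite[Proposition 3.6]{BunceFeelyTimoney} is enough, and dealing with the left/right-ideal discrepancy just described.
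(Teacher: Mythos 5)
Your proof is correct and follows essentially the same route as the paper's: decompose via (\ref{eqn:typeIpluscontinuous}) into a type I$_\infty$ part and a continuous part, put each into the Horn--Neher normal forms (\ref{eqn:typeIinfinity}) and (\ref{eqn:continuousdecomp}), and invoke Proposition~\ref{prop:revjcstaralg} for the \JWstar-algebra summands and Proposition~\ref{PropPIG} for the corner summands, assembling the finitely many pieces with \cite[Proposition 3.6]{BunceFeelyTimoney}. The only addition is your explicit treatment of the $We$ versus $eW$ discrepancy via the opposite algebra, a point the paper leaves implicit.
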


\begin{proof}
The \JWstar-triples in question are those of the form
\[
(\mbox{type }I_\infty) \oplus_\infty (\mbox{continuous})
\]
(it being understood that one or both summands can be zero).
Recalling the
Horn-Neher structure theory related in
(\ref{eqn:typeIinfinity})
and
(\ref{eqn:continuousdecomp}),
the assertion follows from
Proposition~\ref{PropPIG} together with
Proposition~\ref{prop:revjcstaralg}.
\end{proof}

\begin{definition}
\label{def:wur}
We say that a \JWstar-triple $M$ is \emph{weakly universally reversible} if for
every weak*-continuous triple homomorphism $\pi \colon M \to \BH$, $\pi(M)$ is
reversible in $\BH$.
\end{definition}

\begin{remarks}
\label{rems:ur}
By \cite[Theorem 5.6]{BunceFeelyTimoney}
a Cartan factor is universally reversible if and only if it is not a Hilbert
space of dimension $\geq 3$ nor a spin factor of dimension $\geq 5$.
If $B$ is an abelian
von Neumann algebra and $C$ is a Cartan factor, then
\cite[Corollary 4.10]{BunceTimoney3} (for the case $\dim C < \infty$)
together with Proposition~\ref{PropPIG} (for the case $C = \BH e$)
and Proposition~\ref{prop:revjcstaralg} (for the remaining cases)
imply that $B \BARotimes C$ is 
universally reversible if and only if $C$ is universally reversible.
We note also that the class of \JWstar-triples satisfying
Definition~\ref{def:wur} is stable under arbitrary $\ell_\infty$ direct
sums.

Combining these remarks with
\cite[Theorem 4.12]{BunceTimoney3} and
Theorem~\ref{thm:URifNoTypIfin}, and using the notation $V_n$ for a spin factor
of dimension $n+1$, we have the following characterisation.
\end{remarks}

\begin{proposition}
\label{prop:wurcharact}
The following are equivalent for a \JWstar-triple $M$.
\begin{enumerate}[(a)]
\item
$M$ is  weakly universally reversible.
\item
The 
type I$_1$ and type
I$_{\mathrm{spin}}$ parts of $M$
are linearly isometric to
$
B_0 \oplus_\infty B_1 \otimes \ell_2^2$
and to
$ B_2 \otimes V_2
\oplus_\infty B_3 \otimes V_3$ (respectively),
where the $B_i$ are abelian von Neumann algebras (or zero).
\end{enumerate}

In particular $M$ is weakly universally reversible if
it has zero type I$_1$ and type
I$_{\mathrm{spin}}$ parts.
\end{proposition}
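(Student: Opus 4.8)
The plan is to reduce, via the Horn--Neher structure theory and the stability of weak universal reversibility under $\ell_\infty$ sums, to the individual homogeneous summands of $M$, and then to read off which of them are weakly universally reversible using the Cartan factor criterion recorded in Remarks~\ref{rems:ur}.

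First I would note that weak universal reversibility passes both to and from $\ell_\infty$ summands. Stability under arbitrary $\ell_\infty$ sums is already observed in Remarks~\ref{rems:ur}; for the converse, if $M_i$ is a weak*-closed ideal of $M$ then the canonical projection $P_i \colon M \to M_i$ is a weak*-continuous triple homomorphism, so any weak*-continuous $\pi \colon M_i \to \BH$ produces the weak*-continuous homomorphism $\pi \circ P_i \colon M \to \BH$ with image $\pi(M_i)$; hence weak universal reversibility of $M$ forces it for each $M_i$. Combining this with the decompositions (\ref{eqn:typeIpluscontinuous}) and (\ref{eqn:TypeI}), $M$ is weakly universally reversible if and only if its continuous part and each homogeneous summand $B \BARotimes C$ of its type I part are weakly universally reversible.

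Next I would dispose of every summand other than the type I$_1$ and type I$_{\mathrm{spin}}$ pieces. The continuous and type I$_\infty$ parts have zero type I$_{\mathrm{finite}}$ part, hence are universally reversible, in particular weakly so, by Theorem~\ref{thm:URifNoTypIfin}. Each remaining type I$_{\mathrm{finite}}$ homogeneous summand is $B \BARotimes C$ with $B$ abelian and $C$ a finite-rank Cartan factor in canonical form; when $C$ is neither a Hilbert space nor a spin factor it is universally reversible by the criterion in Remarks~\ref{rems:ur}, and then so is $B \BARotimes C$, again by those Remarks. Thus the only summands that can obstruct weak universal reversibility are the pieces $B \BARotimes C$ with $C$ a Hilbert space or a spin factor, that is, the constituents of the type I$_1$ and type I$_{\mathrm{spin}}$ parts.

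It then remains to decide, for $C$ a Hilbert space or a spin factor, when $B \BARotimes C$ is weakly universally reversible; this is exactly what \cite[Theorem 4.12]{BunceTimoney3} supplies, and together with the Cartan factor criterion it yields that $B \BARotimes C$ is weakly universally reversible precisely when $C$ is universally reversible, i.e. when $C$ is a Hilbert space of dimension at most $2$ or a spin factor $V_n$ with $n \leq 3$. Collecting the surviving pieces (counting the two-dimensional spin factor $V_1$ among the two-dimensional Hilbert spaces) identifies the type I$_1$ part with $B_0 \oplus_\infty B_1 \otimes \ell_2^2$ and the type I$_{\mathrm{spin}}$ part with $B_2 \otimes V_2 \oplus_\infty B_3 \otimes V_3$, which is the equivalence of (a) and (b); the final clause is the special case in which no type I$_1$ or type I$_{\mathrm{spin}}$ summand occurs. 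I expect the real work, carried by \cite[Theorem 4.12]{BunceTimoney3}, to be the forcing half for the bad pieces: for $\dim C \geq 3$ (Hilbert) or $\dim C \geq 5$ (spin) one must exhibit a genuinely weak*-continuous triple homomorphism on $B \BARotimes C$ with non-reversible image. The natural candidate is $\id_B \BARotimes \rho$ for a non-reversible Cartan factor embedding $\rho$ of $C$; the delicate points are that this map really is normal (so that weak*-continuity survives even when $B$ is non-atomic and admits no weak*-continuous character to evaluate), and that, $B$ being abelian, every reversible element over $B \BARotimes \rho(C)$ collapses to $c \otimes z$ with $z$ a reversible element arising from $\rho(C)$, so that the non-reversibility of $\rho(C)$ is inherited by $B \BARotimes \rho(C)$.
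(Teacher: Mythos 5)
Your proposal is correct and follows essentially the same route as the paper, which derives the proposition directly from Remarks~\ref{rems:ur} (the Cartan factor criterion, the fact that $B \BARotimes C$ is universally reversible iff $C$ is, and $\ell_\infty$-stability) combined with Theorem~\ref{thm:URifNoTypIfin} and the cited \cite[Theorem 4.12]{BunceTimoney3}. Your explicit observation that weak universal reversibility also passes \emph{to} weak*-closed ideal summands via the canonical projections, and your identification of \cite[Theorem 4.12]{BunceTimoney3} as carrying the forcing half for the obstructing summands, simply make precise what the paper leaves implicit.
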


In the next section we shall exploit Proposition~\ref{prop:wurcharact}
to show amongst other things that weakly universally reversible \JWstar-triples
are universally reversible. For the time being we note the following.

\begin{proposition}
\label{URiffWURbidual}
Let $E$ be a \JCstar-triple. Then $E$  is universally reversible if and
only if its bidual $E^{**}$ is weakly universally reversible.
\end{proposition}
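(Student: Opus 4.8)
The plan is to compare the two conditions through the canonical bijection between triple homomorphisms on $E$ and weak*-continuous triple homomorphisms on $E^{**}$. Writing $\kappa\colon E\to E^{**}$ for the canonical embedding, every triple homomorphism $\pi\colon E\to\BH$ extends to a unique weak*-continuous triple homomorphism $\tilde\pi\colon E^{**}\to\BH$ (using Goldstine density of the bounded set $\kappa(E)$ in $E^{**}$ together with the separate weak*-continuity of the triple product), and conversely each weak*-continuous $\rho\colon E^{**}\to\BH$ restricts to the triple homomorphism $\rho\circ\kappa\colon E\to\BH$; these are mutually inverse. Since $\tilde\pi(E^{**})$ is weak*-closed and contains the weak*-dense set $\pi(E)$, we have $\tilde\pi(E^{**})=\overline{\pi(E)}^{\,w^*}$. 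The whole proposition thus reduces to comparing reversibility of $\pi(E)$ with reversibility of its weak*-closure $\overline{\pi(E)}^{\,w^*}$, as $\pi$ ranges over all triple homomorphisms into $\BH$.

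For the implication that $E$ universally reversible forces $E^{**}$ weakly universally reversible, the key observation is that \emph{reversibility passes to weak*-closures}. Given a reversible subtriple $N\subseteq\BH$ and elements $a_1,\dots,a_{2n+1}\in\overline N^{\,w^*}$, I would approximate each $a_i$ weak* by a bounded net from $N$ (Kaplansky/Goldstine) and, using that the reversible-element map $(a_1,\dots,a_{2n+1})\mapsto a_1a_2^*\cdots a_{2n+1}+a_{2n+1}\cdots a_1$ is separately weak*-continuous in each variable, replace the coordinates one at a time; an induction on the number of replaced coordinates realises the reversible element as a weak*-limit of reversible elements of $N\subseteq\overline N^{\,w^*}$, which is weak*-closed. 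Granting this, if $E$ is universally reversible then for any weak*-continuous $\rho$ the set $(\rho\circ\kappa)(E)$ is reversible, hence so is its weak*-closure $\rho(E^{**})=\overline{(\rho\circ\kappa)(E)}^{\,w^*}$; thus $E^{**}$ is weakly universally reversible.

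The converse is the main obstacle, precisely because reversibility does \emph{not} in general descend to a weak*-dense subtriple, so the closure argument cannot simply be run backwards. Here I would pass to the universal TRO $T=\TROu(E)$ with its canonical (order-two, product-reversing) involution $\Phi$, for which the stated criterion reads: $E$ is universally reversible if and only if $\utro E(E)=\{x\in T:\Phi(x)=x\}=:F(\Phi)$, where always $\utro E(E)\subseteq F(\Phi)$. Taking biduals, $\Phi^{**}$ is a weak*-continuous involution of the W*-TRO $T^{**}$, and since $P=\tfrac12(\mathrm{id}+\Phi)$ is a bounded idempotent with range $F(\Phi)$, the general Banach-space identity $\mathrm{ran}(P^{**})=F(\Phi)^{\perp\perp}$ gives $F(\Phi^{**})=F(\Phi)^{\perp\perp}$; likewise $\utro E(E)^{\perp\perp}$ is the weak*-closure of $\kappa(\utro E(E))$ in $T^{**}$, canonically identified with $E^{**}$. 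The crux is then to identify $(T^{**},\Phi^{**})$, together with the embedding $E^{**}\cong\utro E(E)^{\perp\perp}\hookrightarrow T^{**}$, as the weak*-universal TRO $\WTROu(E^{**})$ of $E^{**}$ with its canonical involution — that is, to show every weak*-continuous triple homomorphism $\rho\colon E^{**}\to\BH$ extends to a normal TRO homomorphism $T^{**}\to\BH$, that the canonical involution descends (its kernel being generated by a $\Phi^{**}$-fixed set), and that consequently $E^{**}$ is weakly universally reversible exactly when $\utro E(E)^{\perp\perp}=F(\Phi^{**})$.

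With that identification in hand the equivalence closes algebraically: $E^{**}$ weakly universally reversible $\iff \utro E(E)^{\perp\perp}=F(\Phi^{**})=F(\Phi)^{\perp\perp}\iff \utro E(E)=F(\Phi)\iff E$ universally reversible, where the middle step uses that for norm-closed subspaces $V\subseteq W$ of $T$ one has $V^{\perp\perp}=W^{\perp\perp}$ only if $V=W$ (because $V=V^{\perp\perp}\cap\kappa(T)$). I expect the genuine work to lie entirely in the third paragraph: establishing that $T^{**}$ with $\Phi^{**}$ plays the role of the weak*-universal TRO of $E^{**}$ and that weak universal reversibility is detected by the fixed-point equality there (equivalently, the concrete statement that $\rho(E^{**})$ is reversible in $\BH$ iff it equals the fixed-point space of the canonical involution on the W*-TRO it generates, verified on a faithful normal representation). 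The remaining ingredients — the extension/restriction bijection, the weak*-closure stability of reversibility, and the biduality bookkeeping — are routine once that weak*-universal TRO correspondence is available.
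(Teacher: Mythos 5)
Your forward implication is essentially the paper's: pass between triple homomorphisms on $E$ and weak*-continuous ones on $E^{**}$, and push reversibility to the weak*-closure by separate weak*-continuity of multiplication together with an induction on the number of coordinates replaced by bounded nets. That half is fine.

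The converse is where the gap lies. You reduce it to the assertion that $E^{**}$ is weakly universally reversible if and only if $\utro E(E)^{\perp\perp}=F(\Phi^{**})$ in $\TROu(E)^{**}$, and you explicitly defer the proof of that assertion as the ``crux''. It is not routine: the implication from the fixed-point equality back to weak universal reversibility would need $\TROu(E)^{**}$ to carry a universal property for \emph{weak*-continuous} triple homomorphisms on $E^{**}$ (a weak*-universal TRO), which is not available in the paper or its references and is a substantial claim in its own right; and even the direction you actually need (weak universal reversibility forces $F(\Phi^{**})\subseteq\utro E(E)^{\perp\perp}$) requires expressing elements of $\TROu(E)^{**}$ as weak* limits of linear combinations of odd products from $E$ and then applying the weak*-continuous idempotent $\frac12(\id+\Phi^{**})$ --- an argument you sketch nowhere. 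As written, the converse is a programme, not a proof. The irony is that the ingredients you already have close the converse directly, and this is what the paper does: apply weak universal reversibility of $E^{**}$ to the single weak*-continuous embedding $E^{**}\hookrightarrow\TROu(E)^{**}$ (realised concretely) to conclude that $E^{**}$ is reversible in $\TROu(E)^{**}$; then for $x_1,\dots,x_{2n+1}\in E$ the element $\trop{x_1}{\cdots}{x_{2n+1}}+\trop{x_{2n+1}}{\cdots}{x_1}$ lies both in $E^{**}$ and in $\TROu(E)$, and $E^{**}\cap\TROu(E)=E$ by exactly your observation that a norm-closed subspace $V$ satisfies $V=V^{\perp\perp}\cap T$. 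Hence $E$ is reversible in $\TROu(E)$, and since every triple homomorphism on $E$ factors through a TRO homomorphism on $\TROu(E)$, $E$ is universally reversible --- no fixed-point characterisation and no weak*-universal object required.
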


\begin{proof}
If $E$ is universally reversible and $\pi \colon E^{**} \to \BH$ is a
weak*-continuous triple homomorphism then, since $\pi(E)$ is reversible in
$\BH$, separate weak*-continuity of multiplication together with a simple
induction shows that $\pi(E^{**}) = \overline{\pi(E)}$ is reversible in $\BH$.

Conversely, let $E^{**}$ be weakly universally reversible.
Identifying $E$ with its image in $\TROu(E)$ (and $\TROu(E)$ canonically with
its embedding in its bidual, $E^{**}$ with the weak*-closure of $E$ in
$\TROu(E)^{**}$) we have the following commutative diagram
\[
\xymatrix{
E \ar@{^{(}->}[r] \ar@{^{(}->}[d] & \TROu(E) \ar@{^{(}->}[d]\\
E^{**} \ar@{^{(}->}[r] & \TROu(E)^{**}
}
\]
Let $x_1, \ldots, x_{2n+1} \in E$. Then, since $E^{**}$ is reversible in
$\TROu(E)^{**}$, by assumption,
$\trop{x_1}{\cdots}{x_{2n+1}}
+\trop{x_{2n+1}}{\cdots}{x_{1}} \in E^{**} \cap \TROu(E) = E$
(since elements of  $ \TROu(E) \setminus E$ are not in the
weak*-closure of $E$).
Therefore $E$ is reversible in $\TROu(E)$ and so is universally
reversible.
\end{proof}

\section{Characterising Universal Reversibility}
\label{sec:characterising}

We establish the prevalence of universally reversible \JCstar-triples
by proving that the property fails only when a \JCstar-triple
possesses a quotient (by an ideal) linearly isometric to a Hilbert
space of dimension $\geq 3$ or a spin factor of dimension $\geq 5$
(Theorem~\ref{thm:charaterise}).
In this sense we prove that `almost all' \JCstar-triples are
universally reversible.
In like vein we characterise universally reversible TROs and
\JWstar-triples, in the latter case showing that universal
reversibility is equivalent to the formally weaker version of
Definition~\ref{def:wur}.
We conclude this section
by showing that the number of terms in the definition of
universal reversibility can be reduced to the minimum possible of
five.

\begin{lemma}
\label{lem:spinTripotents}
Let $u$ be a nonzero tripotent in a \JCstar-subtriple $E$ of a spin
factor $V$.
Then
\begin{enumerate}[(a)]
\item
\label{lem:spinTripotentsi}
$u$ is unitary if it is not minimal;
\item
\label{lem:spinTripotentsii}
$E$ is linearly isometric to a spin factor, a Hilbert space or to $\IC
\oplus_\infty \IC$.
\end{enumerate}
\end{lemma}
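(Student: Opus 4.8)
The plan is to reduce both parts to one elementary fact about the ambient factor: in a spin factor $V$ every nonzero tripotent is either minimal or unitary. I would realise $V$ concretely through its inner product $\langle\cdot,\cdot\rangle$ and a conjugation $j$ (a conjugate-linear isometric involution), so that $\tp xyz = \langle x,y\rangle z + \langle z,y\rangle x - \langle x,jz\rangle\,jy$ and $\|x\|^2 = \langle x,x\rangle + \sqrt{\langle x,x\rangle^2 - |\langle x,jx\rangle|^2}$. Writing out $\tp uuu = u$ for a nonzero tripotent $u$ gives $(2\langle u,u\rangle - 1)u = \langle u,ju\rangle\,ju$, and a short computation forces exactly two possibilities: the \emph{minimal} case $\langle u,u\rangle = 1/2$ and $\langle u,ju\rangle = 0$ (whence $Q_u y = 2\langle u,y\rangle u$, so $V_2(u) = \IC u$), and the \emph{unitary} case $\langle u,u\rangle = 1$ and $ju = \mu u$ with $|\mu| = 1$ (whence $\tp uuy = y$ for all $y$, so $V = V_2(u)$). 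This dichotomy is standard and could instead be quoted from the structure theory of spin factors.

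For part~(\ref{lem:spinTripotentsi}) I would use that the Peirce spaces of $u$ computed inside the subtriple satisfy $E_k(u) = E \cap V_k(u)$, since $x \mapsto \tp uux$ on $E$ is the restriction of the corresponding operator on $V$. If $u$ is minimal in $V$ then $E_2(u) = E \cap \IC u = \IC u$, so $u$ is minimal in $E$; contrapositively, if $u$ is not minimal in $E$ it is not minimal in $V$, hence unitary in $V$, and then $E_1(u) = E_0(u) = 0$, i.e. $u$ is unitary in $E$. This is exactly the claim.

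For part~(\ref{lem:spinTripotentsii}) I would split according to the symmetric bilinear form $\beta(x,z) = \langle x,jz\rangle$ restricted to $E$. If $\beta|_{E\times E} = 0$, then on $E$ the triple product collapses to $\tp xyz = \langle x,y\rangle z + \langle z,y\rangle x$ and the norm to $\|x\|^2 = 2\langle x,x\rangle$, so $E$ is linearly isometric to the Hilbert space $(E,\sqrt{2}\,\|\cdot\|_2)$ with its rank-one triple product. If $\beta|_{E\times E} \neq 0$, then (a symmetric $\IC$-bilinear form that is not identically zero has a non-isotropic vector) there is $x \in E$ with $\beta(x,x) \neq 0$; such an $x$ is not a scalar multiple of a minimal tripotent, since those are $\beta$-isotropic, so its singular-value decomposition in the rank-two factor $V$ has singular values $s_1 \geq s_2 > 0$. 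When $s_1 > s_2$ the spectral tripotents obtained by functional calculus inside the abelian subtriple generated by $x$ lie in $E$, and their sum is a complete, hence unitary, tripotent $u \in E$; when $s_1 = s_2$, $x$ is already a positive multiple of such a $u$. With a unitary $u \in E$ we have $V = V_2(u)$, a \JBstar-algebra under $x \circ y = \tp xuy$ with unit $u$ and involution $x^{\#} = \tp uxu$, and $E$ is a unital \JBstar-subalgebra. I would finish by classifying these: for a self-adjoint $\alpha u + h$ with $h$ in the complement $H$ of $\IC u$ one computes $(\alpha u + h)\circ(\alpha u + h) = (\alpha^2 + \langle h,h\rangle)u + 2\alpha h$, which shows $E^{sa} = \IR u \oplus H_0$ for a closed subspace $H_0 \subseteq H$; hence $E \cong \IC u \oplus H_0^{\IC}$ is a spin factor, degenerating to $\IC$ when $\dim H_0 = 0$ and to $\IC \oplus_\infty \IC$ when $\dim H_0 = 1$.

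The hard part will not be the dichotomy itself but two things: pinning down the normalisations in the spin-factor model precisely (the triple product, the norm, and the way $j$ interacts with orthogonal minimal tripotents), which is fiddly but routine; and, more substantively, the extraction of a unitary tripotent from a single non-isotropic element via its singular-value decomposition. That extraction is the mechanism that forces $E$ into the \emph{algebra} alternative whenever $\beta|_E \neq 0$, and so cleanly separates the Hilbert-space case from the spin-factor and $\IC \oplus_\infty \IC$ cases.
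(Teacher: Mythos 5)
Your argument is correct, but it takes a different route from the paper's. For part~(a) the paper simply cites Edwards--R\"uttimann (their Lemma~5.4) for the minimal/unitary dichotomy, whereas you derive it directly from the tripotent equation $(2\langle u,u\rangle-1)u=\langle u,ju\rangle ju$ in the concrete model; your derivation checks out with the standard normalisation you use, and the passage from the dichotomy in $V$ to the statement about $E$ via $E_k(u)=E\cap V_k(u)$ is exactly right. For part~(b) the paper argues structurally: since $V$ is reflexive $E$ is a \JWstar-subtriple, and one splits on whether $E$ is a factor --- if it is a factor and not a Hilbert space one picks two nonzero orthogonal tripotents, which by (a) are minimal in $V$ with unitary sum $u+v$, so $E=E_2(u+v)$ is a rank-two JBW$^*$-algebra factor and hence a spin factor; if $E$ is not a factor one picks minimal tripotents in two orthogonal ideals to get $\IC\oplus_\infty\IC$. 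You instead split on whether the symmetric bilinear form $\beta(x,z)=\langle x,jz\rangle$ vanishes on $E$, which cleanly isolates the Hilbert-space case, and in the non-vanishing case you manufacture a unitary tripotent from a single non-isotropic element via its spectral decomposition (the $s_1>s_2>0$ versus $s_1=s_2$ case split, with $s_2>0$ forced by $\beta(x,x)=2s_1s_2\beta(e_1,e_2)$, is the key step and is sound), after which the classification of unital JC$^*$-subalgebras of a spin factor finishes the job. The paper's route is shorter because it borrows the JW$^*$ structure theory (weak$^*$-closedness, factor/ideal decomposition, the type I$_2$ classification); yours is longer but self-contained, avoids invoking that $E$ is weak$^*$-closed, and gives the extra information that $E$ is a Hilbert space precisely when $\beta$ vanishes on $E$. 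Both are valid proofs of the lemma.
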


\begin{proof}
\begin{enumerate}[(a)]
\item See \cite[Lemma 5.4]{EdwardsRutt1986}, for example.

\item
Since $V$ is reflexive, $E$ is a \JWstar-subtriple.
If $E$ is a factor and not a Hilbert space we may choose nonzero
orthogonal tripotents $u$ and $v$ in $E$ which, by 
(\ref{lem:spinTripotentsi}), are minimal in $V$ with $u+v$ unitary, so
that $E = E_2(u+v)$ is a spin factor.
If $E$ is not a factor, then $E = I \oplus_\infty J$ for certain
nonzero ideals $I$ and $J$ of $E$ and now we may choose minimal
tripotents $u \in I$ and $v \in J$ giving $E = E_2(u+v) = \IC u
\oplus_\infty \IC v$.
\end{enumerate}
\end{proof}

If $B$ is an abelian \Cstar-algebra and $C$ a finite dimensional
Cartan factor, we note that the \JCstar-triple $B \otimes C$ has a
separating family of Cartan factor representations onto $C$ given by
$\rho \otimes \id_C$ as $\rho$ ranges over the pure states of $B$.
Recall that we introduced following (\ref{eqn:TypeI}) the notion of a
\JWstar-triple being $\{C_j: j\in J\}$-homogeneous (for distinct
Cartan factors $C_j$).

\begin{lemma}
\label{subtripleOfFRRep}
Let $E$ be a \JCstar-triple with a separating family of Cartan factor
representations $\{ \pi_\lambda \colon E \to C_\lambda \}_{\lambda \in
\Lambda}$ where $C_\lambda$ has finite rank $\forall \lambda \in
\Lambda$. (Hence $\pi_\lambda(E) = C_\lambda$ for all $\lambda \in
\Lambda$.)
\begin{enumerate}[(a)]
\item 
\label{subtripleOfFRRepa}
If $D$ is a Cartan subfactor of $E$, then $D$ is linearly isometric
to a subfactor of $C_\lambda$ for some $\lambda \in \Lambda$.

\item
\label{subtripleOfFRRepb}
If $E$ is a \JWstar-triple and $F$ a \JWstar-subtriple of $E$, then
$F$ is $\{ D_i : i \in I \}$-homogeneous where each $D_i$ is contained
in $C_\lambda$ for some $\lambda$ (depending on $i$).

\item
\label{subtripleOfFRRepc}
If $\Lambda$ is finite and each $C_\lambda$ is finite-dimensional,
then
$E^{**}$ is
$\{ D_{1}, \ldots, D_{n} \}$-homogeneous, where each 
$D_i$ is contained
in $C_\lambda$ for some $\lambda$ (depending on $i$).
\end{enumerate}
\end{lemma}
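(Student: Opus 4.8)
The plan is to treat the three parts in the order (a), (b), (c), using (a) as the engine for the other two.

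For (a), I would start from the fact that the Cartan subfactor $D$ carries minimal tripotents. Fixing a minimal tripotent $u \in D$ and using that $\{\pi_\lambda\}$ separates the points of $E \supseteq D$, I would choose $\lambda$ with $\pi_\lambda(u) \neq 0$ and then argue that $\pi_\lambda|_D$ is injective. The key point is that in a Cartan factor the ideal $K_D$ spanned by the minimal tripotents is the smallest nonzero norm closed ideal, so the kernel $\ker(\pi_\lambda|_D)$, being a norm closed ideal that does \emph{not} contain the minimal tripotent $u \in K_D$, must be $\{0\}$. Since an injective triple homomorphism of \JBstar-triples is isometric, $D$ is then linearly isometric to the subtriple $\pi_\lambda(D)$ of $C_\lambda$, which is a subfactor because $D$ is a factor. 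I would cite the smallest-ideal fact from the structure theory of Cartan factors.

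For (b) the target is to show that $F$ is type I with every factor of finite rank, so that $F \cong \bigoplus_i B_i \BARotimes D_i$ is $\{D_i\}$-homogeneous; the embeddings $D_i \hookrightarrow C_\lambda$ then come from (a) applied to the Cartan subfactors $\IC 1 \BARotimes D_i \cong D_i$ of $F$. That each $D_i$ has finite rank is immediate from (a), since a factor of infinite rank cannot embed isometrically into a finite-rank $C_\lambda$; hence the whole content lies in ruling out a continuous part. Here I would prove that a nonzero continuous \JWstar-triple $M$ admits \emph{no} nonzero triple homomorphism into a finite-rank Cartan factor, which contradicts the separating family $\{\pi_\lambda|_M\}$ obtained by restricting to the putative continuous part of $F$. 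To see this, I would use that $M$ is the norm closed span of its tripotents, pick a tripotent $u$ with $\phi(u) = w \neq 0$ for the given $\phi \colon M \to C$, and observe that $\phi$ restricts to a unital Jordan $*$-homomorphism $M_2(u) \to C_2(w)$. As $M_2(u)$ is a continuous \JWstar-algebra, iterated halving (as in the proof of Proposition~\ref{prop:revjcstaralg}) yields, for arbitrarily large $n$, $n$ orthogonal projections summing to $u$ and pairwise exchanged by symmetries; their images are $n$ pairwise symmetry-exchanged orthogonal tripotents in $C$ summing to $w \neq 0$, hence all nonzero, forcing $n \leq \rank C$, which is impossible once $n > \rank C$.

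The main obstacle is precisely this continuous-part step, and within it the assertion that $M_2(u)$ is again continuous. This is not a one-line Peirce computation: for a subprojection $p$ the space $M_2(p)$ can protrude into $M_1(u)$, so I would lean on the Horn--Neher/HOS type theory, reducing the $We$ summand of $M$ to a continuous corner $pWp$ (compatibly with Lemma~\ref{subdivideTripotents}) and handling the \JWstar-algebra summand by the \JW-algebra type theory. A secondary subtlety is that the given $\pi_\lambda$ need not be weak${}^*$-continuous, so the argument must be run at the level of tripotents and Jordan $*$-homomorphisms rather than by decomposing along weak${}^*$-closed ideals.

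Finally, (c) is a direct consequence of (a): when $\Lambda$ is finite and every $C_\lambda$ is finite-dimensional, the map $x \mapsto (\pi_\lambda(x))_\lambda$ is an isometric triple embedding of $E$ into the finite-dimensional space $\bigoplus_{\lambda} C_\lambda$, so $E$ is finite-dimensional and $E = E^{**}$. A finite-dimensional \JCstar-triple is an $\ell^\infty$-sum of finite-dimensional Cartan factors; grouping the mutually isometric summands exhibits $E^{**}$ as $\{D_1,\dots,D_n\}$-homogeneous, and (a) identifies each $D_i$ with a subfactor of some $C_\lambda$.
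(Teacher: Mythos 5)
Your part (a) is essentially the paper's argument: both use the separating family to find $\lambda$ with $\pi_\lambda$ nonvanishing on the elementary ideal $K_D$, and both conclude via the fact that injective triple homomorphisms are isometric. The paper phrases the algebraic input as simplicity of $K_D$ (so $K_D\cong\pi_\lambda(K_D)\subseteq C_\lambda$ forces $\rank D<\infty$ and $K_D=D$), whereas you use minimality of $K_D$ among nonzero closed ideals to get injectivity of $\pi_\lambda|_D$ outright; either works. For (b) you do genuinely more than the paper, which disposes of the claim in one sentence (``follows from (a) and Horn's classification''). You isolate and prove the step left implicit there, namely that the continuous part of $F$ vanishes because a nonzero continuous \JWstar-triple admits no nonzero triple homomorphism into a finite-rank Cartan factor. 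Your route --- pass to the Peirce space $M_2(u)$, invoke the Horn--Neher lemma that Peirce-$2$ spaces of tripotents in continuous \JWstar-triples are continuous \JWstar-algebras (the same lemma the paper cites in \S\ref{sec:preliminaries}), then halve the identity into $2^k$ symmetry-exchanged orthogonal projections whose images would force $\rank C_\lambda\geq 2^k$ --- is correct, and you rightly identify the continuity of $M_2(u)$ as the one nontrivial input. The finite-rank and embedding claims for the $D_i$ then follow from (a) applied to $1\otimes D_i$, as you say.

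Part (c) is where you and the paper part company. Reading ``$\Lambda$ finite'' literally, your observation that $(\pi_\lambda)_{\lambda}$ embeds $E$ isometrically into the finite-dimensional space $\bigoplus_\lambda C_\lambda$ is correct, but it renders the statement nearly vacuous ($E=E^{**}$ finite dimensional). The paper's own proof instead embeds $E$ into $\bigoplus_\lambda \ell_\infty(X_\lambda)\otimes C_\lambda$ for ``appropriate sets $X_\lambda$'' and passes to biduals, which shows the intended hypothesis is that the $C_\lambda$ range over finitely many \emph{isometry classes} while $\Lambda$ itself may be infinite; this is precisely the form in which (c) is used in Lemma~\ref{lemma:suf4ur}(a), where $E$ carries a typically infinite separating family of representations onto factors from $\{\IC,\ell_2^2,V_2,V_3\}$. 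Under that reading your finite-dimensionality shortcut fails, and the argument has to run as in the paper: realise $E$ inside a finite sum $\bigoplus_\lambda \ell_\infty(X_\lambda, C_\lambda)$, map $E^{**}$ onto a \JWstar-subtriple of $\bigoplus_\lambda B_\lambda\BARotimes C_\lambda$ with $B_\lambda$ abelian von Neumann algebras, and then apply your part (b).
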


\begin{proof}
\begin{enumerate}[(a)]
\item 
Let $D$ be a Cartan subfactor of $E$ and choose $\lambda \in \Lambda$
such that $\pi_\lambda$ does not vanish on the elementary ideal $K_D$
of $D$. Then (since $K_D$ is has no nontrivial ideals)
$K_D \cong \pi_\lambda(K_D) \subseteq C_\lambda$, implying
that $D$ has finite rank, giving $K_D = D$ and proving 
(\ref{subtripleOfFRRepa}).

\item
This follows from (\ref{subtripleOfFRRepa}) and Horn's type I
classification
(\ref{eqn:TypeI}) (since the existence of a weak*-continuous triple
homomorphism $\pi \colon F \to D$ onto a Cartan factor $D$ implies that
$D$ is isomorphic to a summand of $F$).

\item
In this case, for appropriate sets $X_\lambda$, $E$ may be
represented as a subtriple of a finite $\ell_\infty$-sum
$\bigoplus_{\lambda \in \Lambda} \ell_\infty (X_\lambda) \otimes
C_\lambda$ and , in turn, $E^{**}$ may be mapped to a
\JWstar-subtriple of $\bigoplus_{\lambda \in \Lambda} B_\lambda
\otimes C_\lambda$ (with $B_\lambda$ abelian von Neumann algebras).
The assertion now follows from (\ref{subtripleOfFRRepb}).
\end{enumerate}
\end{proof}

\begin{lemma}
\label{lem:TypeI1sub}
Let $M$ be a type I$_1$ \JWstar-triple.
Then every \JWstar-subtriple of $M$ is type I, $M^{**}$ is type I,
and all Cartan factor representations of $M$ are onto Hilbert spaces. 
\end{lemma}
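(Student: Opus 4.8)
The plan is to extract a single complete abelian tripotent from the type~I$_1$ hypothesis and to let it drive all three assertions. By the equivalence recalled in \S\ref{sec:preliminaries} (Horn, \cite[\S2]{HornTypeI}), a type~I$_1$ \JWstar-triple $M$ contains a complete abelian tripotent $u$, and I would fix such a $u$ once and for all.

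For the statement on Cartan factor representations, let $\pi \colon M \to C$ be any Cartan factor representation. Since $\pi$ is a nonzero triple homomorphism with weak*-dense range into the \JWstar-triple $C$, Lemma~\ref{lem:tripots} applies directly and shows that $\pi(u)$ is a complete abelian tripotent of $C$. Applying the cited equivalence (type~I$_1$ $\Leftrightarrow$ possession of a complete abelian tripotent) to the single Cartan factor $C$ then forces $C$ to be type~I$_1$, hence a Hilbert space. This settles the third claim.

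For the subtriple statement I would combine this with the general fact \cite{FRgn} that $M$ admits a separating family of Cartan factor representations: by the previous paragraph every member of such a family is onto a Hilbert space, and Hilbert spaces have rank one and so are of finite rank. Thus $M$ meets the hypotheses of Lemma~\ref{subtripleOfFRRep}, and part~(\ref{subtripleOfFRRepb}) gives that any \JWstar-subtriple $F$ is $\{D_i\}$-homogeneous with each $D_i$ contained in a Hilbert space. Since a closed subspace of a Hilbert space is automatically a subtriple (the triple product $\tp x y z$ lies in $\SPAN\{x,z\}$) and is itself a Hilbert space, each $D_i$ is a Hilbert space, so $F$ is type~I (indeed type~I$_1$) by Horn's classification~(\ref{eqn:TypeI}).

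For the bidual, the plan is to show that $u$, regarded in $M^{**}$, remains a complete abelian tripotent, whence $M^{**}$ is type~I$_1$, in particular type~I, by the same characterization. Completeness transfers easily: the Peirce projection $P_0(u)$ extends weak*-continuously to $M^{**}$ and vanishes on the weak*-dense subspace $M$, hence vanishes on all of $M^{**}$. The abelianness is the delicate point and the main obstacle, requiring the identification of $(M^{**})_2(u)$ with $(M_2(u))^{**}$. Here I would use that $P_2(u)$ is weak*-continuous, so that $(M^{**})_2(u) = P_2(u)M^{**}$ is the weak*-closure of $M_2(u)$ in $M^{**}$; for the closed subspace $M_2(u)$ this weak*-closure is canonically its bidual, carrying the weak*-continuously extended Jordan product $x \jp y = \tp x u y$. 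Since $u$ is abelian in $M$, the factor $M_2(u)$ is an abelian \JWstar-algebra, and the bidual of an abelian \JCstar-algebra is abelian; therefore $(M^{**})_2(u)$ is abelian, so $u$ is an abelian tripotent of $M^{**}$. This completes all three assertions.
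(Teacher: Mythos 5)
Your proof is correct and follows essentially the same route as the paper: the first assertion via Lemma~\ref{subtripleOfFRRep}~(\ref{subtripleOfFRRepb}), and the other two by pushing the complete abelian tripotent $u$ into $M^{**}$ and into $C$ respectively. The only differences are cosmetic: you prove by hand (via weak*-continuity of the Peirce projections and the identification $(M^{**})_2(u)\cong (M_2(u))^{**}$) what the paper simply cites as Lemma~\ref{lem:tripots}, and you establish the third claim first in order to justify the finite-rank hypothesis of Lemma~\ref{subtripleOfFRRep}, a step the paper leaves implicit.
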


\begin{proof}
The first assertion follows from
 Lemma~\ref{subtripleOfFRRep} (\ref{subtripleOfFRRepb}).
Since $M$ contains an abelian complete tripotent $u$, which is
automatically complete and abelian in $M^{**}$
(Lemma~\ref{lem:tripots}), the latter is type I.
Finally, for any Cartan factor representation $\pi \colon M \to C$,
$\pi(u)$ is a minimal complete tripotent in $C$, so that $C$ is a
Hilbert space.
\end{proof}

\begin{proposition}
\label{prop:typeI1}
The following are equivalent for a \JCstar-triple $E$.
\begin{enumerate}[(a)]
\item
All Cartan factor representations of $E$ are onto Hilbert
spaces.
\item
$E$ has a separating family of Hilbert space representations.
\item
$E^{**}$ is type I$_1$.
\end{enumerate}
\end{proposition}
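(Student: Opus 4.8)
The plan is to establish the cycle of implications $(a)\Rightarrow(b)\Rightarrow(c)\Rightarrow(a)$. Two of these are immediate. For $(a)\Rightarrow(b)$: every \JCstar-triple carries a separating family of Cartan factor representations by \cite{FRgn}, and under $(a)$ each member of this family lands in a Hilbert space, which is precisely $(b)$. For $(c)\Rightarrow(a)$: given any Cartan factor representation $\pi\colon E\to C$, its weak*-continuous extension $\tilde\pi\colon E^{**}\to C$ is a surjective triple homomorphism (as recorded in Section~\ref{sec:preliminaries}), hence a Cartan factor representation of $E^{**}$; since $E^{**}$ is type I$_1$ by $(c)$, Lemma~\ref{lem:TypeI1sub} forces $C$ to be a Hilbert space.

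The substance lies in $(b)\Rightarrow(c)$. Starting from a separating family $\{\sigma_\mu\colon E\to H_\mu\}$ of Hilbert space representations, I would assemble it into a single triple homomorphism $\sigma\colon E\to\mathcal H:=\bigoplus_\mu H_\mu$ into the $\ell_\infty$-sum; being separating it is injective, and an injective triple homomorphism is a triple isomorphism onto its range, hence a linear isometry. Passing to biduals, $\sigma^{**}\colon E^{**}\to\mathcal H^{**}$ is again isometric and weak*-continuous, so it is a weak*-homeomorphism onto its (weak*-closed) image, thereby realising $E^{**}$ as a \JWstar-subtriple of $\mathcal H^{**}$.

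The crux is then that $\mathcal H^{**}$ is type I$_1$. Here I would exploit that $\mathcal H$ possesses a complete abelian tripotent, namely $u=(u_\mu)_\mu$ with each $u_\mu$ a unit vector of $H_\mu$ (completeness, $E_0(u)=0$, and abelianness, $E_2(u)=\bigoplus_\mu\IC u_\mu$, being verified coordinatewise in the $\ell_\infty$-sum). Under the weak*-dense inclusion $\mathcal H\hookrightarrow\mathcal H^{**}$, Lemma~\ref{lem:tripots} shows that $u$ remains a complete abelian tripotent of $\mathcal H^{**}$, so by the characterisation of type I$_1$ recalled in Section~\ref{sec:preliminaries} the \JWstar-triple $\mathcal H^{**}$ is type I$_1$. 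Being type I$_1$, $\mathcal H^{**}$ carries a separating family of Cartan factor representations, all onto Hilbert spaces by Lemma~\ref{lem:TypeI1sub} and in particular onto Cartan factors of finite rank. I can therefore invoke Lemma~\ref{subtripleOfFRRep}(\ref{subtripleOfFRRepb}) for the \JWstar-subtriple $E^{**}$ of $\mathcal H^{**}$, obtaining that $E^{**}$ is $\{D_i\}$-homogeneous with each $D_i$ linearly isometric to a subtriple of some $H_\lambda$. Since a nonzero subtriple of a Hilbert space is again a Hilbert space, every $D_i$ is a Hilbert space, whence $E^{**}$ is type I$_1$, completing $(b)\Rightarrow(c)$.

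The main obstacle I anticipate is exactly this $(b)\Rightarrow(c)$ step, and specifically the need to argue at the level of $E^{**}$ rather than merely the weak*-closure of $\sigma(E)$ inside $\mathcal H$, which is only a quotient of $E^{**}$ and so cannot by itself detect the structure of the bidual. Passing to $\mathcal H^{**}$ removes this difficulty, but at the cost of the two technical verifications flagged above: that $\sigma^{**}$ embeds $E^{**}$ as a genuinely weak*-closed subtriple, so that Lemma~\ref{subtripleOfFRRep} applies, and that $\mathcal H^{**}$ inherits type I$_1$ from $\mathcal H$ through the lifted complete abelian tripotent. Both of these reduce to routine bookkeeping with Lemma~\ref{lem:tripots} and the weak*-continuity of second adjoints, so I expect no essential difficulty beyond care with these points.
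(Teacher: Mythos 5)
Your proof is correct and follows essentially the same route as the paper: (a)$\Rightarrow$(b) and (c)$\Rightarrow$(a) are handled identically, and for (b)$\Rightarrow$(c) the paper likewise embeds $E$ into an $\ell_\infty$-sum $M$ of Hilbert spaces and deduces that $E^{**}\subset M^{**}$ is type I$_1$ via Lemma~\ref{lem:TypeI1sub}. The only difference is that you unpack that lemma inline (lifting the complete abelian tripotent by Lemma~\ref{lem:tripots} and invoking Lemma~\ref{subtripleOfFRRep}(\ref{subtripleOfFRRepb}) for the subtriple), which is exactly how the paper proves it.
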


\begin{proof}
(a) $\Rightarrow$ (b):
This is clear.

(b) $\Rightarrow$ (c):
Given (b), $E$ may be realised as a subtriple of an $\ell_\infty$-sum,
$M$ (say),
of Hilbert spaces.
Since $M$ is a type I$_1$ \JWstar-triple and $E^{**} \subset M^{**}$,
$E^{**}$ is type I$_1$ by 
Lemma~\ref{lem:TypeI1sub}.

(c) $\Rightarrow$ (a):
The implication follows from 
Lemma~\ref{lem:TypeI1sub} (since any Cartan factor representation $\pi
\colon E \to C$ extends to a triple homomorphism from $E^{**}$ onto
$C$).
\end{proof}

\begin{proposition}
\label{prop:NonTypeI1Ideal}
Let $E$ be a \JCstar-triple. Let $J$ be the intersection of the
kernels of all nonzero Hilbert space representations of $E$
(it being understood that $J = E$ if there are no nonzero Hilbert
space representations). Then
\begin{enumerate}[(a)]
\item
\label{prop:NonTypeI1Ideala}
$E^{**}$ has nonzero type I$_1$ part if and only if
$E$ has a nonzero Hilbert space representation.
\item
\label{prop:NonTypeI1Idealb}
$(E/J)^{**}$ is type I$_1$ and $J^{**}$ has zero type I$_1$ part.

\item
\label{prop:NonTypeI1Idealc}
$E^{**}$ has nonzero type I$_{\mathrm{spin}}$ part
if and only if $E$ has a spin factor representation.
\end{enumerate}
\end{proposition}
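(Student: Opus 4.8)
The plan is to route everything through the correspondence recorded in \S\ref{sec:preliminaries}: a Cartan factor representation of $E$ onto $C$ exists exactly when $E^{**}$ contains a weak*-closed ideal linearly isometric to $C$. A weak*-closed ideal isometric to a Cartan factor is a \JWstar-triple factor, and the type I$_1$, type I$_{\mathrm{spin}}$ and remaining summands in (\ref{eqn:typeIpluscontinuous})--(\ref{eqn:continuousdecomp}) are mutually orthogonal weak*-closed ideals; intersecting such a factor ideal with each summand yields weak*-closed ideals of the factor, so the factor lies entirely in the single summand matching its own type. This at once gives the ``if'' halves of (\ref{prop:NonTypeI1Ideala}) and (\ref{prop:NonTypeI1Idealc}): a nonzero Hilbert space (respectively, spin factor) representation yields a weak*-closed ideal of $E^{**}$ isometric to a Hilbert space (respectively, a spin factor), which therefore lies in, and makes nonzero, the type I$_1$ (respectively, type I$_{\mathrm{spin}}$) part.

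For the converse in (\ref{prop:NonTypeI1Ideala}), let $P \neq \{0\}$ be the type I$_1$ part of $E^{**}$ and $\phi \colon E^{**} \to P$ the weak*-continuous projection, so $\phi(E)$ is weak*-dense in $P$ and nonzero. By Lemma~\ref{lem:TypeI1sub} every Cartan factor representation of the type I$_1$ triple $P$ is onto a Hilbert space, and by \cite{FRgn} there is a separating family of them; as this family separates $P$ and $\phi(E) \neq \{0\}$, some member $\sigma$ has $\sigma \circ \phi|_E \neq 0$. Then $\pi := \sigma \circ \phi|_E \colon E \to H$ is nonzero, and since every closed subspace of a Hilbert space is a subtriple isometric to a Hilbert space, the weak*-closure of $\pi(E)$ is a nonzero Hilbert space onto which $\pi$ is a Hilbert space representation.

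Part (\ref{prop:NonTypeI1Idealb}) then follows formally. Each Hilbert space representation of $E$ kills $J$ and so descends to $E/J$, and the descended family separates $E/J$; Proposition~\ref{prop:typeI1} makes $(E/J)^{**}$ type I$_1$. For the second assertion it suffices, by (\ref{prop:NonTypeI1Ideala}) applied to the \JCstar-triple $J$, to rule out a nonzero Hilbert space representation $\rho \colon J \to H$ of $J$. Such a $\rho$ would extend: composing the weak*-continuous projection $E^{**} \to J^{**}$ (onto the weak*-closed ideal $J^{**}$) with the weak*-continuous surjection $\widetilde{\rho} \colon J^{**} \to H$ and restricting to $E$ gives a Hilbert space representation $\psi \colon E \to H$ with weak*-dense range and $\psi|_J = \rho$. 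By definition $J \subseteq \ker \psi$, contradicting $\psi|_J = \rho \neq 0$; hence $J$ has no nonzero Hilbert space representation and $J^{**}$ has zero type I$_1$ part.

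The converse in (\ref{prop:NonTypeI1Idealc}) follows the same route but meets one genuine difficulty. Writing the nonzero type I$_{\mathrm{spin}}$ part as $P_s \cong \bigoplus_j B_j \BARotimes V^{(j)}$ with $B_j$ abelian and $V^{(j)}$ spin factors, a pure state of some nonzero $B_j$ furnishes, via the corresponding slice map, a Cartan factor representation of $P_s$ onto the spin factor $V^{(j)}$; composing with $\phi|_E$ as before yields a nonzero triple homomorphism $\pi \colon E \to V^{(j)}$. The obstacle is that, in contrast to the Hilbert case, the weak*-closure $D$ of $\pi(E)$ need not be a spin factor: by Lemma~\ref{lem:spinTripotents}(\ref{lem:spinTripotentsii}) it may collapse to a Hilbert space or to $\IC \oplus_\infty \IC$, in which event $\pi$ is only a Hilbert space representation and not the spin representation sought. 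The heart of the proof is thus to choose the pure state of $B_j$ so that the resulting fibre representation is irreducible onto $V^{(j)}$ and $D = V^{(j)}$ --- the triple analogue of the standard fact that a \Cstar-algebra whose bidual has nonzero type I$_n$ part possesses an $n$-dimensional irreducible representation --- and securing this non-degeneracy is where the real work of (\ref{prop:NonTypeI1Idealc}) lies.
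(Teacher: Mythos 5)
Parts (\ref{prop:NonTypeI1Ideala}) and (\ref{prop:NonTypeI1Idealb}) of your argument are correct and essentially coincide with the paper's: the canonical projection onto the type I$_1$ part, a separating family of Hilbert space representations supplied by Lemma~\ref{lem:TypeI1sub}, and the extension of Hilbert space representations from the ideal $J$ back to $E$. The problem is the ``only if'' half of (\ref{prop:NonTypeI1Idealc}). You correctly identify the obstruction --- the composite of a fibre representation of the type I$_{\mathrm{spin}}$ part with the projection restricted to $E$ lands in a spin factor, but by Lemma~\ref{lem:spinTripotents}~(\ref{lem:spinTripotentsii}) the closure of its image may collapse to a Hilbert space or to $\IC \oplus_\infty \IC$ --- but you then stop, announcing that ``securing this non-degeneracy is where the real work lies'' without doing that work. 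This is a genuine gap, not a routine verification: you give no mechanism for choosing the pure state of $B_j$ so as to exclude the degenerate alternatives, and the analogy with irreducible representations of \Cstar-algebras does not by itself produce one.

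The paper closes the gap by changing the domain rather than by a careful choice of fibre. By part (\ref{prop:NonTypeI1Idealb}), $J^{**}$ is the orthogonal complement of the type I$_1$ part of $E^{**}$ and hence contains the type I$_{\mathrm{spin}}$ part $N$ as a weak*-closed ideal; writing $Q \colon E^{**} \to N$ for the canonical projection, one chooses from the separating family of spin factor representations of $N$ (Stacey) one, say $\pi$, with $\pi(Q(J)) \neq \{0\}$. Now apply Lemma~\ref{lem:spinTripotents}~(\ref{lem:spinTripotentsii}) to the closure of $\pi(Q(J))$: were it a Hilbert space or $\IC \oplus_\infty \IC$, the ideal $J$ would admit a nonzero Hilbert space representation, which is impossible since any such representation extends to one of $E$ and therefore annihilates $J$ by the very definition of $J$. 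So the closure is a spin factor, $\pi \circ Q$ induces a spin factor representation of $J$, and this extends to a spin factor representation of $E$ by \cite[Remarks 2.5 (b)]{BunceTimoney3}. In short, the degeneracy you could not exclude for $E$ is excluded automatically for $J$, and representations of the ideal $J$ extend back to $E$; restructuring your argument around $J$ makes (\ref{prop:NonTypeI1Idealc}) go through.
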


\begin{proof}
\begin{enumerate}[(a)]
\item
Suppose the type I$_1$ part, $M$, of $E^{**}$ is nonzero and consider
the (weak*-continuous) canonical projection $P \colon E^{**} \to M$.
Since $P(E)$ is a nonzero \JCstar-subtriple of $M$, by
Lemma~\ref{lem:TypeI1sub} there is a Hilbert space representation $\pi$
of $M$  which does not vanish on $P(E)$, implying that
$\pi \circ P$ is a Hilbert space representation of $E$.
The converse is clear.

\item
By construction $E/J$ has a separating family of Hilbert space
representations so that $(E/J)^{**} \equiv E^{**}/J^{**}$
is type I$_1$ by
Proposition~\ref{prop:typeI1}.
Further, since any Hilbert space representation of $J$ will extend to
one of $E$, $J^{**}$ has zero type I$_1$ part by
(\ref{prop:NonTypeI1Ideala}).

\item
The converse being clear, suppose that the type
I$_{\mathrm{spin}}$ part,
$N$,
of $E^{**}$ is nonzero.
By (\ref{prop:NonTypeI1Idealb}), 
$J^{**}$ is the orthogonal complement of the type I$_1$ part of
$E^{**}$ and so contains $N$ as a weak*-closed ideal.
Let $Q \colon E^{**} \to N$ be the canonical projection.
Since $N$ has a separating family of spin factor representations
\cite[Lemma 2]{Stacey1982},
we may choose a spin factor representation $\pi \colon N \to V$ with
$\pi(Q(J)) \neq \{0\}$. By the definition of $J$ together with
Lemma~\ref{lem:spinTripotents}, $\pi \circ Q$ induces a spin factor
representation of $J$, which extends to a spin factor representation
of $E$ (\cite[Remarks 2.5 (b)]{BunceTimoney3}).
\end{enumerate}
\end{proof}

\begin{lemma}
\label{lemma:suf4ur}
Let $E$ be a \JCstar-triple. Each of the following two conditions
separately implies that $E$ is universally reversible.
\begin{enumerate}[(a)]
\item
\label{lemma:suf4ura}
$E$ has separating family of representations onto Cartan factors $C
\in \{ \IC, \ell_2^2, V_2, V_3\}$
\item
\label{lemma:suf4urb}
$E$ has no nonzero Hilbert space representations and
no spin factor representations.
\end{enumerate}
\end{lemma}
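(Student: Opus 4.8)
The plan is to reduce both implications to a single statement about the bidual. By Proposition~\ref{URiffWURbidual}, $E$ is universally reversible if and only if $E^{**}$ is weakly universally reversible, so in each case it suffices to verify the structural criterion of Proposition~\ref{prop:wurcharact} for the \JWstar-triple $E^{**}$: that its type I$_1$ part has the form $B_0 \oplus_\infty B_1 \otimes \ell_2^2$ and its type I$_{\mathrm{spin}}$ part the form $B_2 \otimes V_2 \oplus_\infty B_3 \otimes V_3$ for abelian von Neumann algebras $B_i$.

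Condition (\ref{lemma:suf4urb}) is the quick case. Since $E$ has no nonzero Hilbert space representation, Proposition~\ref{prop:NonTypeI1Ideal}(\ref{prop:NonTypeI1Ideala}) shows that $E^{**}$ has zero type I$_1$ part; since $E$ has no spin factor representation, Proposition~\ref{prop:NonTypeI1Ideal}(\ref{prop:NonTypeI1Idealc}) shows that $E^{**}$ has zero type I$_{\mathrm{spin}}$ part. The final clause of Proposition~\ref{prop:wurcharact} then gives that $E^{**}$ is weakly universally reversible, and Proposition~\ref{URiffWURbidual} finishes the argument.

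For condition (\ref{lemma:suf4ura}) I would instead realise the structure of $E^{**}$ explicitly. The separating family exhibits $E$ as a \JCstar-subtriple of the finite $\ell_\infty$-sum $M = \bigoplus_{C} \ell_\infty(\Lambda_C) \otimes C$ taken over $C \in \{\IC, \ell_2^2, V_2, V_3\}$, so $E^{**}$ embeds as a \JWstar-subtriple of $M^{**} = \bigoplus_C \ell_\infty(\Lambda_C)^{**} \otimes C$. Each summand here is of the form $B \otimes C$ with $B$ an abelian von Neumann algebra and $C$ a finite-dimensional (hence finite-rank) Cartan factor, so by the observation preceding Lemma~\ref{subtripleOfFRRep} it carries a separating family of Cartan factor representations onto $C$; collecting these over the four summands, $M^{**}$ has a separating family of finite-rank Cartan factor representations onto the factors $\{\IC, \ell_2^2, V_2, V_3\}$. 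Applying Lemma~\ref{subtripleOfFRRep}(\ref{subtripleOfFRRepb}) to the \JWstar-subtriple $E^{**}$ of $M^{**}$ exhibits $E^{**}$ as $\{D_i\}$-homogeneous, where each Cartan factor $D_i$ is isometric to a subfactor of one of $\IC, \ell_2^2, V_2, V_3$.

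The crux is then to check that every such subfactor $D_i$ again lies in $\{\IC, \ell_2^2, V_2, V_3\}$. Subfactors of $\IC$ and $\ell_2^2$ are evidently Hilbert spaces of dimension $\leq 2$, while a subfactor of $V_2$ or $V_3$ is, by Lemma~\ref{lem:spinTripotents}(\ref{lem:spinTripotentsii}), either a spin factor — necessarily $V_2$ or $V_3$, these having dimensions $3$ and $4$ — or a Hilbert space. The one delicate point, and the step I expect to require genuine care, is bounding the dimension of a Hilbert subfactor: I must rule out a $3$-dimensional Hilbert space sitting inside $V_3 \cong M_2(\IC)$. This follows because a Hilbert space has rank one, so such a subfactor would be a linear subspace of $M_2(\IC)$ consisting entirely of matrices of rank $\leq 1$, whereas the maximal dimension of such a subspace is $2$; the case of $V_2$ is subsumed since $V_2$ embeds in $V_3$. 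Hence every Hilbert subfactor has dimension $\leq 2$, so each $D_i \in \{\IC, \ell_2^2, V_2, V_3\}$. Collecting terms, the type I$_1$ part of $E^{**}$ is then $B_0 \oplus_\infty B_1 \otimes \ell_2^2$ and its type I$_{\mathrm{spin}}$ part is $B_2 \otimes V_2 \oplus_\infty B_3 \otimes V_3$, with no other parts present. By Proposition~\ref{prop:wurcharact} the \JWstar-triple $E^{**}$ is weakly universally reversible, and Proposition~\ref{URiffWURbidual} yields that $E$ is universally reversible.
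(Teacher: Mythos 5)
Your overall route is the paper's own: both cases are reduced via Proposition~\ref{URiffWURbidual} to showing $E^{**}$ is weakly universally reversible; case (\ref{lemma:suf4urb}) is handled exactly as in the paper by Proposition~\ref{prop:NonTypeI1Ideal} (\ref{prop:NonTypeI1Ideala}), (\ref{prop:NonTypeI1Idealc}) and Proposition~\ref{prop:wurcharact}; and in case (\ref{lemma:suf4ura}) you are essentially re-deriving Lemma~\ref{subtripleOfFRRep} (\ref{subtripleOfFRRepc}) and then invoking Lemma~\ref{lem:spinTripotents} to identify the homogeneous summands, which is precisely what the paper cites. So the architecture is fine and case (\ref{lemma:suf4urb}) is complete.

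The one step you rightly flag as delicate is, however, justified by a false implication. You argue that since a Hilbert space has rank one as a Cartan factor, a Hilbert subfactor of $V_3 \cong M_2(\IC)$ must consist of matrices of operator rank $\leq 1$. That inference does not hold in general: the triple-theoretic rank of a subtriple is not computed from operator ranks in the ambient algebra, as the one-dimensional subfactor $\IC \cdot 1 \subseteq M_2(\IC)$ (rank one as a Cartan factor, but consisting of invertible matrices) already shows. The conclusion you want is nevertheless true, and the missing step is supplied by Lemma~\ref{lem:spinTripotents} (\ref{lem:spinTripotentsi}): a nonzero tripotent of a subtriple $E$ of a spin factor $V$ is either minimal in $V$ or unitary in $V$. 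If $E$ is a Hilbert subfactor with $\dim E \geq 2$ and some unit vector $u \in E$ were unitary in $V$, then $u$ would be a unitary tripotent of $E$, forcing $E = E_2(u) = \IC u$, a contradiction; hence every unit vector of $E$ is a minimal tripotent of $V_3$, i.e.\ a rank-one partial isometry, and only then does your isotropic-subspace bound ($\det$ is a nondegenerate quadratic form on $\IC^4$, so a totally isotropic subspace has dimension $\leq 2$) apply to give $\dim E \leq 2$. With that patch the identification of every subfactor of $\IC, \ell_2^2, V_2, V_3$ as again lying in $\{\IC, \ell_2^2, V_2, V_3\}$ is correct, and the rest of your argument goes through.
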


\begin{proof}
By Proposition~\ref{URiffWURbidual},
in either case it is enough to show that $E^{**}$ is weakly
universally reversible.
\begin{enumerate}[(a)]
\item
In this case by Lemma~\ref{subtripleOfFRRep} (\ref{subtripleOfFRRepc})
and
Lemma~\ref{lem:spinTripotents}
$E^{**}$ is linearly isometric to an $\ell_\infty$-sum of
\JWstar-triples of the form $B \BARotimes C$ where $C \in \{ \IC,
\ell_2^2, V_2, V_3\}$ and $B$ is an abelian von Neumann algebra, and
so is universally reversible
(see Remarks~\ref{rems:ur}).

\item
By Proposition~\ref{prop:NonTypeI1Ideal}
(\ref{prop:NonTypeI1Ideala}),
(\ref{prop:NonTypeI1Idealc})
$E^{**}$ has zero type I$_1$ and type I$_{\mathrm{spin}}$
parts and thus is weakly
universally reversible by Proposition~\ref{prop:wurcharact}.
\end{enumerate}
\end{proof}

Our characterisation of universally reversible \JCstar-triples below
shows that Hilbert spaces of dimension $\geq 3$ and spin factors of
dimension $\geq 5$ are essentially the only obstacles to the property.

\begin{theorem}
\label{thm:charaterise}
Let $E$ be a \JCstar-triple. Then $E$ is universally reversible if and
only if $E$ satisfies both of the following conditions.
\begin{enumerate}[(a)]
\item
\label{thm:charaterisea}
$E$ has no representation onto a Hilbert space of dimension $\geq 3$.
\item
\label{thm:charateriseb}
$E$ has no representation onto a spin factor of dimension $\geq 5$.
\end{enumerate}
\end{theorem}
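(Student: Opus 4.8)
The plan is to prove both implications, handling necessity by contraposition and sufficiency through an ideal decomposition tied together by the universal TRO.

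For the ``only if'' direction I would argue contrapositively: if either condition fails, then $E$ is not universally reversible. Suppose $E$ has a representation $\pi \colon E \to C$ onto $C$, where $C$ is a Hilbert space of dimension $\geq 3$ (case (\ref{thm:charaterisea})) or a spin factor of dimension $\geq 5$ (case (\ref{thm:charateriseb})). Since $C$ has finite rank (one, respectively two), this representation is necessarily surjective, $\pi(E) = C$. By Remarks~\ref{rems:ur} such a $C$ is not universally reversible, so there is a triple homomorphism $\sigma \colon C \to \BH$ with $\sigma(C)$ not reversible in $\BH$. Then $\sigma \circ \pi \colon E \to \BH$ is a triple homomorphism whose image $\sigma(\pi(E)) = \sigma(C)$ is not reversible, so $E$ is not universally reversible. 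Hence universal reversibility forces both (\ref{thm:charaterisea}) and (\ref{thm:charateriseb}).

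For the ``if'' direction, assume (\ref{thm:charaterisea}) and (\ref{thm:charateriseb}), and let $J$ be the intersection of the kernels of all Hilbert space representations and all spin factor representations of $E$, a norm closed ideal. First I would show $J$ is universally reversible: a nonzero Hilbert space (respectively spin factor) representation of $J$ extends to one of $E$ (as in the proof of Proposition~\ref{prop:NonTypeI1Ideal}, the spin case via \cite[Remarks 2.5(b)]{BunceTimoney3}), but every such representation of $E$ vanishes on $J$ by construction, while the extension restricts to the given nonzero representation on $J$ — a contradiction; so $J$ has no nonzero Hilbert space and no spin factor representation, and Lemma~\ref{lemma:suf4ur}~(\ref{lemma:suf4urb}) applies. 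Next, every Hilbert space and spin factor representation of $E$ factors through $E/J$, and the induced representations separate $E/J$ (as $J$ is their common kernel); by (\ref{thm:charaterisea}) and (\ref{thm:charateriseb}) each is onto a Hilbert space of dimension at most two or a spin factor of dimension at most four, hence (splitting any reducible two-dimensional spin factor $\IC \oplus_\infty \IC$ into scalar representations) onto a member of $\{\IC, \ell_2^2, V_2, V_3\}$. Lemma~\ref{lemma:suf4ur}~(\ref{lemma:suf4ura}) then gives that $E/J$ is universally reversible.

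It remains to deduce the universal reversibility of $E$ from that of the ideal $J$ and the quotient $E/J$, and this combination step is the crux of the argument. I would work inside $\TROu(E)$ via the fixed-point criterion, using $\TROu(J) = \TRO(\utro E(J))$, an ideal of $\TROu(E)$, and $\TROu(E/J) = \TROu(E)/\TROu(J)$. The canonical involution $\Phi$ of $\TROu(E)$ leaves $\TROu(J)$ invariant (the latter is generated as a TRO by the $\Phi$-fixed set $\utro E(J)$) and induces the canonical involutions of $\TROu(J)$ and of $\TROu(E/J)$. Given $y \in \TROu(E)$ with $\Phi(y) = y$, universal reversibility of $E/J$ yields $x \in E$ with $y - \utro E(x) \in \TROu(J)$; this difference is $\Phi$-fixed, so universal reversibility of $J$ places it in $\utro E(J)$, whence $y \in \utro E(E)$. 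Thus $\utro E(E)$ equals the $\Phi$-fixed part of $\TROu(E)$ and $E$ is universally reversible. The main obstacle I anticipate is exactly this last step: verifying that $\Phi$ restricts and descends compatibly along $0 \to J \to E \to E/J \to 0$, so that the fixed-point criterion can be chased simultaneously through the ideal and the quotient.
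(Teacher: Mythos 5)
Your proof is correct and follows essentially the same route as the paper: strip off the Hilbert space and spin factor representations by passing to the ideal given by the intersection of their kernels, apply Lemma~\ref{lemma:suf4ur} (a) to the quotient and Lemma~\ref{lemma:suf4ur} (b) to the ideal, and glue. The only differences are organisational: the paper uses a two-step filtration ($J$ for Hilbert space representations of $E$, then $K \subseteq J$ for spin factor representations of $J$) where you use a single combined ideal, and it cites \cite[Corollary 3.4]{BunceTimoney3} for the step ``ideal and quotient universally reversible implies $E$ universally reversible'', which you instead reprove correctly via the fixed-point criterion for the canonical involution.
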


\begin{proof}
Let $E$ satisfy 
(\ref{thm:charaterisea}) and (\ref{thm:charateriseb}).
Let $J$ be as in
Proposition~\ref{prop:NonTypeI1Ideal}.
Note that the hypotheses pass to ideals (since Cartan
factor representations of ideals extend) and we rely on the fact that
universal reversibility of both an ideal $J$ of a \JCstar-triple $E$
and the quotient $E/J$ imply universal reversibility of $E$
\cite[Corollary 3.4]{BunceTimoney3}.

Now either $J = E$ (in which case $E/J$ is trivially reversible) or
$E/J$ has a separating family of representations onto Hilbert spaces
of dimensions one or two and thus is universally reversible by
Lemma~\ref{lemma:suf4ur} (\ref{lemma:suf4ura}). We show now
that $J$ is universally reversible.
Suppose that $J \neq \{0\}$ and let $K$ be the intersection of the
kernels of spin factor representations of $J$. Then $J/K$ is zero or
has a separating family of representations onto spin factors of
dimensions 3 or 4 so that $J/K$ is universally reversible, again by
Lemma~\ref{lemma:suf4ur} (\ref{lemma:suf4ura}).
The ideal $K$ is universally reversible because it satisfies the
condition of Lemma~\ref{lemma:suf4ur} (\ref{lemma:suf4urb}).
Hence $J$ is universally reversible, as therefore is $E$.

The converse is clear.
\end{proof}

\begin{remark}
\label{remark:simpleUR}
It is immediate from Theorem~\ref{thm:charaterise} that the only simple \JCstar-triples 
failing to be universally reversible are the Hilbert spaces of dimension
$\geq 3$ and the spin factors of dimension $\geq 5$.

Since realisable as a corner of a C*-algebra (see
\cite[p.~493]{EffrosOzawaRuan}, for example), up
to complete isometry every TRO is the range of a completely
contractive projection on a \Cstar-algebra. If $E$ is a \JCstar-triple then the
set of points in $\TROu(E)$ fixed by its canonical involution $\Phi$
is the range
of the bicontractive projection $\frac 12 ( \id  +  \Phi)$ on $\TROu(E)$.
It follows that
every universally reversible \JCstar-triple linearly isometric to the range of 
a contractive projection on a \Cstar-algebra. Since Hilbert spaces and spin
factors also possess this property, by
\cite{EffrosStormerMathScand1979}
in the spin case, we note that every simple \JCstar-triple
has the same property.
\end{remark}

We recall from \cite[Theorem 3.5]{BunceTimoney3} that every
\JCstar-triple has a largest universally reversible
ideal. We can reprove its existence and characterise it as
follows.

\begin{corollary}
Let $E$ be a \JCstar-triple and let $I$ denote the intersections of
the kernels of
representations onto Hilbert space of dimension $\geq 3$ and spin factors of dimension
$\geq 5$
(understood as $E$ if there are no such
representations). Then $I$ is the largest universally reversible
ideal of $E$.
\end{corollary}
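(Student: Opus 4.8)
The plan is to prove the two inclusions that together say $I$ is the largest universally reversible ideal, thereby reproving its existence as well. Concretely, I would show \textbf{(A)} that $I$ is itself universally reversible, and \textbf{(B)} that every universally reversible ideal $L$ of $E$ satisfies $L \subseteq I$.

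For (A), by Theorem~\ref{thm:charaterise} it suffices to verify that the \JCstar-triple $I$ admits no representation onto a Hilbert space of dimension $\geq 3$ and none onto a spin factor of dimension $\geq 5$. Suppose $\sigma \colon I \to C$ were such a Cartan factor representation, $C$ being one of these factors. Since Cartan factor representations of ideals extend (the fact underlying ``the hypotheses pass to ideals'' in the proof of Theorem~\ref{thm:charaterise}, and \cite[Remarks 2.5 (b)]{BunceTimoney3} in the spin case), $\sigma$ extends to a Cartan factor representation $\pi \colon E \to C$, still with weak*-dense range since $\sigma(I) \subseteq \pi(E)$. Then $C$ is one of the target factors defining $I$, whence $I \subseteq \ker \pi$ and so $\sigma = \pi|_I = 0$, contradicting that $\sigma$ is onto the nonzero factor $C$. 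Hence $I$ has no such representations and is universally reversible.

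For (B), suppose for a contradiction that some universally reversible ideal $L$ has $L \not\subseteq I$. Then there is a Cartan factor representation $\pi \colon E \to C$, with $C$ a Hilbert space of dimension $\geq 3$ or a spin factor of dimension $\geq 5$, such that $\pi(L) \neq \{0\}$. The crux is to show that $\pi(L)$ is weak*-dense in $C$, for then $\pi|_L \colon L \to C$ is a Cartan factor representation of $L$ onto $C$, contradicting universal reversibility of $L$ via Theorem~\ref{thm:charaterise}. To this end let $N$ be the weak*-closure of $\pi(L)$ in $C$. From the ideal relations $\tp{E}{E}{L} \subseteq L$ and $\tp{E}{L}{E} \subseteq L$ we obtain $\tp{\pi(E)}{\pi(E)}{\pi(L)} \subseteq \pi(L)$ and $\tp{\pi(E)}{\pi(L)}{\pi(E)} \subseteq \pi(L)$; propagating these to weak*-limits, using the separate weak*-continuity of the triple product on the \JWstar-triple $C$ together with the weak*-density of $\pi(E)$ in $C$, yields $\tp{C}{C}{N} \subseteq N$ and $\tp{C}{N}{C} \subseteq N$. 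Thus $N$ is a weak*-closed ideal of $C$; as $C$ is a factor and $N \neq \{0\}$, necessarily $N = C$, giving the required density.

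The appeals to Theorem~\ref{thm:charaterise} and to the extension property of Cartan factor representations are routine. The one step needing care is the weak*-density argument in (B): one must carry \emph{both} defining ideal relations over to $N$ by separate weak*-continuity (not merely $\tp{C}{C}{N} \subseteq N$ but also $\tp{C}{N}{C} \subseteq N$), and then invoke factoriality of $C$ to force $N = C$. This is the same species of weak*-continuity-plus-induction argument already used in the proof of Proposition~\ref{URiffWURbidual}, so I anticipate no genuinely new difficulty.
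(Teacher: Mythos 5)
Your proposal is correct and follows essentially the same route as the paper: both halves rest on Theorem~\ref{thm:charaterise} together with the extension property of Cartan factor representations from ideals. The only cosmetic difference is in the maximality half, where the paper notes that the offending Cartan factors have finite rank, so that $\pi(E)=C$ exactly and $\pi(L)$ is an ideal of the simple factor $C$ (hence $\{0\}$ or $C$), whereas you reach the same conclusion by a slightly longer weak*-density-plus-factoriality argument.
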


\begin{proof}
If $\pi \colon E \to C$ is a Cartan factor representation such that
$C$ fails to be universally reversible,
then $C$ has finite rank and so $\pi(E) = C$. Hence if $J$ is an
ideal in $E$ we have $\pi(J)$ an ideal in $C$, giving $\pi(J)$ equal
to $\{0\}$ or $C$. If $J$ is universally reversible we must have $J
\subseteq \ker \pi$ for all such $\pi$ and so $J \subseteq I$.

On the other hand,
by the extension property of
Cartan factor representations from ideals
and the construction of $I$,
Theorem
\ref{thm:charaterise} implies that
$I$ is universally
reversible.
\end{proof}

\begin{corollary}
\label{cor:M2uno1D}
Let $E$ be a \JCstar-triple with a complete tripotent $u$. If $E_2(u)$
is universally reversible with no one dimensional representations,
then $E$ is universally reversible.
\end{corollary}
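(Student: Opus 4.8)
The plan is to invoke Theorem~\ref{thm:charaterise}: it suffices to show that $E$ admits no representation onto a Hilbert space of dimension $\geq 3$ and none onto a spin factor of dimension $\geq 5$. Suppose, for contradiction, that $\pi \colon E \to C$ is a Cartan factor representation with $C$ one of these two types. Both Hilbert spaces and spin factors have finite rank (one and two respectively), so $\pi(E) = C$; in particular $\pi$ is nonzero with weak*-dense range and $C$ is a \JWstar-triple. By Lemma~\ref{lem:tripots}, $\pi(u)$ is then a complete tripotent of $C$, and $\pi(u) \neq 0$.

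The first key step is to show that $\pi$ restricts to a surjective Jordan *-homomorphism of $E_2(u)$ onto $C_2(\pi(u))$. Since the Peirce projections $P_k(u)$ are built from the triple product and $\pi$ is a triple homomorphism, $\pi$ intertwines them: $\pi(P_k(u)x) = P_k(\pi(u))\pi(x)$ for all $x \in E$. Applying this with $k = 2$ gives $\pi(E_2(u)) = P_2(\pi(u))\pi(E) = C_2(\pi(u))$. Moreover, because the Jordan product and involution on the Peirce-$2$ algebras are given by $x \circ y = \tp x u y$ and $x^{\#} = \tp u x u$, the restriction $\pi|_{E_2(u)}$ carries these to the corresponding operations for $\pi(u)$, so it is a surjective Jordan *-homomorphism (equivalently, a triple homomorphism) onto the \JCstar-algebra $C_2(\pi(u))$.

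It remains to identify $C_2(\pi(u))$ in each case and derive a contradiction. If $C$ is a Hilbert space, a direct computation of the Peirce decomposition with respect to the unit vector $\pi(u)$ shows $C_2(\pi(u)) = \IC\,\pi(u)$ is one-dimensional; thus $\pi|_{E_2(u)}$ is a one-dimensional representation of $E_2(u)$, contradicting the hypothesis that $E_2(u)$ has none. If $C$ is a spin factor of dimension $\geq 5$ (hence of rank $2$), then $\pi(u)$ cannot be minimal, since a minimal tripotent in such a factor has nonzero Peirce-$0$ space and so is not complete; by Lemma~\ref{lem:spinTripotents}(\ref{lem:spinTripotentsi}), $\pi(u)$ is therefore unitary, whence $C_2(\pi(u)) = C$. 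Consequently $\pi|_{E_2(u)}$ is a representation of $E_2(u)$ onto a spin factor of dimension $\geq 5$. But $E_2(u)$ is universally reversible, so by Theorem~\ref{thm:charaterise} it admits no such representation --- a contradiction.

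The routine verifications (the Peirce intertwining and the two Peirce-$2$ computations) are standard; the main point, and where the argument is least mechanical, is the reduction via Theorem~\ref{thm:charaterise}, which converts the global statement into the two local obstructions, each of which is precisely excluded by one of the two hypotheses on $E_2(u)$: the absence of one-dimensional representations rules out the Hilbert case, and universal reversibility of $E_2(u)$ rules out the spin case.
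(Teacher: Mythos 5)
Your proposal is correct and follows essentially the same route as the paper: pass to the induced surjection $E_2(u) \to C_2(\pi(u))$ for any Hilbert space or spin factor representation $\pi$, rule out the Hilbert case because $C_2(\pi(u)) = \IC\,\pi(u)$ would be a one dimensional representation of $E_2(u)$, rule out the spin case because $C_2(\pi(u)) = C$ would be a universally reversible spin factor and hence of dimension at most four, and conclude via Theorem~\ref{thm:charaterise}. The extra details you supply (the Peirce intertwining and the observation that a complete tripotent in a spin factor is unitary) are exactly the routine verifications the paper leaves implicit.
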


\begin{proof}
Let $\pi \colon E \to C$ be a Hilbert space or spin factor
representation and put $v
= \pi(u)$.
Then $v$ is a complete tripotent of $C$ and the induced map $\pi_2
\colon M_2(u) \to C_2(v)$ is surjective (since $\pi(E) = C$).
If $C$ is a spin factor, then $C = C_2(v)$ is universally reversible
and so has dimension 3 or 4. If $C$ is a (nonzero) Hilbert space
then $C_2(v) =\IC v$, a contradiction. Hence, $E$ is universally
reversible by Theorem~\ref{thm:charaterise}.
\end{proof}

By passing to a quotient, if a spin factor is a triple homomorphic
image of a TRO, then it is linearly isometric to a TRO and thus, via
Proposition~\ref{prop:TROimage},
Jordan *-isomorphic to a type I$_2$ von Neumann algebra, and hence to
$M_2(\IC)$.

\begin{theorem}
\label{thm:TROurcharact}
Let $T$ be a TRO. Then the following are equivalent.
\begin{enumerate}[(a)]
\item
\label{thm:TROurcharacta}
$T$ is universally reversible (as a \JCstar-triple).
\item
\label{thm:TROurcharactb}
$T$ has no triple homomorphisms onto a Hilbert space of dimension
$\geq 3$.
\item
\label{thm:TROurcharactc}
$T$ has no TRO homomorphisms onto a Hilbert space of dimension
$\geq 3$.
\end{enumerate}
\end{theorem}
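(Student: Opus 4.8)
The plan is to read the theorem off the characterisation in Theorem~\ref{thm:charaterise}, after first disposing of the spin-factor obstruction using the observation recorded immediately before the statement. By Theorem~\ref{thm:charaterise}, $T$ is universally reversible precisely when it admits neither a representation onto a Hilbert space of dimension $\geq 3$ nor a representation onto a spin factor of dimension $\geq 5$. A spin factor has rank $2$, hence finite rank, so any spin factor representation of $T$ is surjective and exhibits its target as a triple-homomorphic image of $T$; by the remark preceding the theorem such an image is linearly isometric to $M_2(\IC) \cong V_3$, of dimension $4$. Consequently $T$ has no representation onto a spin factor of dimension $\geq 5$, the spin condition of Theorem~\ref{thm:charaterise} is vacuous for $T$, and universal reversibility collapses to the single requirement (\ref{thm:TROurcharactb}). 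Since a Cartan factor representation onto a (finite-rank) Hilbert space is exactly a surjective triple homomorphism, this gives (\ref{thm:TROurcharacta}) $\Leftrightarrow$ (\ref{thm:TROurcharactb}).

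The implication (\ref{thm:TROurcharactb}) $\Rightarrow$ (\ref{thm:TROurcharactc}) is immediate: every TRO homomorphism preserves the ternary product $\trop a b c$ and hence the symmetrised product $\tp a b c$, so is a triple homomorphism. Thus a TRO homomorphism onto a Hilbert space of dimension $\geq 3$ would be a triple homomorphism of the kind excluded by (\ref{thm:TROurcharactb}).

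For (\ref{thm:TROurcharactc}) $\Rightarrow$ (\ref{thm:TROurcharactb}) I would argue contrapositively. Suppose $\pi \colon T \to H$ is a triple homomorphism onto a Hilbert space with $\dim H \geq 3$ (automatically surjective, as $H$ has finite rank). Its kernel is a triple ideal of $T$, which by the coincidence of ideal notions on a TRO (\cite[Proposition 5.8]{Harris81}) is a TRO ideal; hence the quotient map $q \colon T \to T/\ker\pi$ is a surjective TRO homomorphism and $T/\ker\pi$ is a TRO. The induced map $T/\ker\pi \to H$ is a triple isomorphism, so $T/\ker\pi$ is a TRO linearly isometric to $H$ and therefore, as a \JCstar-triple, a Hilbert space of dimension $\geq 3$. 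Thus $q$ is a TRO homomorphism onto a Hilbert space of dimension $\geq 3$ and (\ref{thm:TROurcharactc}) fails.

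The genuinely routine ingredients here are that triple-homomorphic kernels in a TRO are TRO ideals and that a quotient of a TRO by a closed TRO ideal is again a TRO; these make (\ref{thm:TROurcharactb}) $\Leftrightarrow$ (\ref{thm:TROurcharactc}) almost formal. I expect the only real content to lie in the reduction of the spin case, which depends on Proposition~\ref{prop:TROimage} (through the remark preceding the theorem) to force every spin-factor image of a TRO down to $M_2(\IC)$. That identification is the crux, since it is exactly what renders the spin-factor clause of Theorem~\ref{thm:charaterise} inapplicable to TROs and so reduces all three conditions to the single Hilbert-space criterion.
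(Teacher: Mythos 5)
Your proposal is correct and follows essentially the same route as the paper: the equivalence of (a) and (b) is obtained from Theorem~\ref{thm:charaterise} by using Proposition~\ref{prop:TROimage} (via the remark preceding the theorem) to force every spin-factor image of a TRO to be $M_2(\IC) \cong V_3$, and the equivalence of (b) and (c) is the formal passage to quotients using the coincidence of triple ideals and TRO ideals. Nothing essential differs from the paper's argument.
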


\begin{proof}
The equivalence of 
(\ref{thm:TROurcharacta})
and
(\ref{thm:TROurcharactb})
follows from Theorem~\ref{thm:charaterise}
and the above remark.
That
(\ref{thm:TROurcharactc})
$\Rightarrow$
(\ref{thm:TROurcharactb})
is seen on passing to quotients (recalling that triple ideals in a TRO
are TRO ideals (\cite[Proposition 5.8]{Harris81}), and
(\ref{thm:TROurcharactb})
$\Rightarrow$
(\ref{thm:TROurcharactc})
is clear.
\end{proof}

\begin{lemma}
\label{lem:typeI2ur}
Let $M$ be an $\ell_\infty$-sum $\bigoplus_{i \in I} B_i \BARotimes
\mathcal{B}(H_i)e_i$
where $B_i$ is an abelian von Neumann algebra and $e_i$ is a rank 2
projection in $\mathcal{B}(H_i)$, for each $i \in I$.
Then $M$ is universally reversible.
\end{lemma}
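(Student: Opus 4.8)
The plan is to reduce everything to the structure theory already established and to use the bidual criterion of Proposition~\ref{URiffWURbidual}. Since universal reversibility is checked on the bidual, and since $M$ as given is already a \JWstar-triple, I first observe that $M^{**}$ need not be $M$; but the class of weakly universally reversible \JWstar-triples is stable under $\ell_\infty$ sums (Remarks~\ref{rems:ur}), so it suffices to treat a single summand $B \BARotimes \mathcal{B}(H)e$ with $B$ abelian and $e$ a rank $2$ projection in $\mathcal{B}(H)$. The key algebraic fact is that $\mathcal{B}(H)e$ is a TRO whose associated Cartan factor is of the form $\BH e$ with $e$ of rank $2$, so it is a rectangular (type $\mathrm I$) Cartan factor of ``column height'' two. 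I would record that $\mathcal{B}(H)e$ is universally reversible: by Theorem~\ref{thm:TROurcharact} it suffices that this TRO has no TRO homomorphism onto a Hilbert space of dimension $\geq 3$, and since $e$ has rank $2$ every triple-homomorphic image sits inside a corner of height two, so the only Hilbert-space quotients have dimension $\leq 2$.

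Next I would feed this into the tensor-product principle. By Remarks~\ref{rems:ur} (the statement that $B \BARotimes C$ is universally reversible exactly when $C$ is, for $C$ a Cartan factor and $B$ abelian), applied with $C = \mathcal{B}(H)e$ of rank $2$, each summand $B_i \BARotimes \mathcal{B}(H_i)e_i$ is universally reversible. The cited result in Remarks~\ref{rems:ur} is phrased for $C$ a Cartan factor in canonical form; since $\mathcal{B}(H)e$ with $e$ of rank $2$ is precisely a type (i) Cartan factor $\BH e$, this applies directly, and the relevant case is the one handled there via Proposition~\ref{PropPIG} (the case $C = \BH e$). So each summand is universally reversible, hence in particular weakly universally reversible as a \JWstar-triple.

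Finally I assemble the pieces: the $\ell_\infty$ sum of weakly universally reversible \JWstar-triples is weakly universally reversible (Remarks~\ref{rems:ur}), so $M$ is weakly universally reversible. To upgrade from \emph{weak} to full universal reversibility, I invoke Proposition~\ref{prop:wurcharact} together with the programme announced after it, or more directly Theorem~\ref{thm:charaterise}: $M$ has no representation onto a Hilbert space of dimension $\geq 3$ (each Cartan factor representation factors through some $\mathcal{B}(H_i)e_i$, whose rank-$2$ structure caps the Hilbert dimension at $2$) and no representation onto a spin factor of dimension $\geq 5$ (a spin factor of dimension $\geq 5$ cannot be a triple-homomorphic image of a height-two type $\mathrm I$ Cartan factor, as can be read from Lemma~\ref{subtripleOfFRRep}~(\ref{subtripleOfFRRepa}) since its rank-two Cartan subfactors are of type (i), (ii), or (iii), not genuine large spin factors). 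Both conditions of Theorem~\ref{thm:charaterise} are then met, so $M$ is universally reversible.

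The main obstacle I anticipate is the clean verification of the spin-factor condition, condition~(\ref{thm:charateriseb}) of Theorem~\ref{thm:charaterise}: one must rule out spin-factor quotients of dimension $\geq 5$. The efficient route is to note that every Cartan factor representation of $M$ has image a subfactor of some $\mathcal{B}(H_i)e_i$ by Lemma~\ref{subtripleOfFRRep}~(\ref{subtripleOfFRRepa}), and the Cartan subfactors of a height-two rectangular factor $\BH e$ have rank at most $2$, so any spin-factor image has rank $2$ and hence dimension $\leq 4$. Making precise the rank constraint ``subfactors of $\BH e$ with $e$ of rank $2$ have rank $\leq 2$'' is the one step requiring care, but it follows because orthogonal minimal tripotents in such a TRO correspond to orthogonal rank-one pieces in the two-dimensional range of $e$.
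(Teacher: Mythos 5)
Your overall target is right---it suffices to verify the two conditions of Theorem~\ref{thm:charaterise} for $M$---but the verification has genuine gaps at exactly the points where the work lies. First, the reduction to a single summand does not go through as stated: stability under $\ell_\infty$-sums is only asserted (in Remarks~\ref{rems:ur}) for \emph{weak} universal reversibility, and at this stage of the paper weak universal reversibility of a \JWstar-triple is not known to imply universal reversibility (that is Theorem~\ref{thm:jwstarurchar}, proved later and by means of this circle of ideas). By Proposition~\ref{URiffWURbidual} you would need $M^{**}$ to be weakly universally reversible, which again amounts to controlling \emph{all} Hilbert-space and spin-factor representations of the whole sum $M$; when $I$ is infinite these need not factor through any single summand (they can vanish on every summand, as characters of $\ell_\infty$ vanishing on $c_0$ already show), so the claim ``each Cartan factor representation factors through some $\mathcal{B}(H_i)e_i$'' is false. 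Second, Lemma~\ref{subtripleOfFRRep}~(\ref{subtripleOfFRRepa}) concerns Cartan \emph{sub}factors contained in $E$, not Cartan-factor \emph{images} of $E$, so it cannot be used to cap the dimension of Hilbert-space or spin-factor quotients; this is precisely the ``main obstacle'' you flag, and your proposed resolution does not resolve it. Third, your appeal to Remarks~\ref{rems:ur} for $C=\mathcal{B}(H)e$ with $e$ of rank $2$ rests on citations that do not cover this case when $\dim H=\infty$: Proposition~\ref{PropPIG} requires $1\otimes e$ properly infinite or the ambient algebra continuous (neither holds here), and \cite[Corollary 4.10]{BunceTimoney3} requires $\dim C<\infty$. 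The single-summand assertion you are importing is essentially the content of the lemma itself, so the argument is close to circular.

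The paper's proof supplies the one idea you are missing: a \emph{global} complete tripotent. With $W=\bigoplus_i B_i\BARotimes\mathcal{B}(H_i)$ and $e=\bigoplus_i 1\otimes e_i$, the element $e$ is a complete tripotent of $M=We$ whose Peirce $2$-space $M_2(e)=eWe$ is a type I$_2$ von Neumann algebra, hence universally reversible and without one-dimensional representations; Corollary~\ref{cor:M2uno1D} then yields the result in one stroke. The point of that corollary is exactly to convert arbitrary Hilbert-space or spin-factor representations $\pi$ of $M$ (including those ``living at infinity'') into representations of the unital algebra $M_2(e)$ via the complete tripotent $\pi(e)$, which is where the dimension bounds you want actually come from. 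If you want to keep your structure, replace both the summand-by-summand reduction and the misapplication of Lemma~\ref{subtripleOfFRRep} with this Peirce-decomposition step.
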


\begin{proof}
With $W = \bigoplus_{i \in I} B_i \BARotimes
\mathcal{B}(H_i)$ and $e = \bigoplus_{i
\in I} 1 \otimes e_i$, we have $e$ a complete tripotent in $M = We$ and $M_2(e) =
eWe$ is a type I$_2$ von Neumann algebra.
Thus $M_2(e)$ is universally reversible and
has no
*-homomorphism onto $\IC$.
Hence $M$ is universally reversible by Corollary~\ref{cor:M2uno1D}.
\end{proof}

If $e$ and $f$ are projections in a \JWstar-algebra $M$ we use $e
\simone f$ to denote that $e$ and $f$ are exchanged by a symmetry
and we recall \cite[Lemma 5.2.9]{HOS}
that if $(e_i)_{i \in I}$ and
$(f_i)_{i \in I}$ are two families of orthogonal projections in $M$
with $e_i \simone f_i$ for all $i \in I$ and $\sum_{i \in I} e_i \perp
\sum_{i \in I} f_i$, then
$\sum_{i \in I} e_i \simone \sum_{i \in I} f_i$.

\begin{lemma}
\label{lem:jwstarnoI1noSp}
Let $M$ be a type I finite \JWstar-algebra with no nonzero summands of
type I$_1$ nor of type I$_{\mathrm{spin}}$. Then $M$ is a universally
reversible \JCstar-triple with no one dimensional representations.
\end{lemma}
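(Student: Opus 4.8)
The plan is to reduce the whole statement to a single structural fact about the bidual, namely that $M^{**}$ has vanishing type I$_1$ and type I$_{\mathrm{spin}}$ parts, and then to read off both conclusions from the characterisation already established.

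First I would unwind the hypothesis via the structure theory of type I finite \JWstar-algebras. Using the coordinatisation theorem \cite[2.8.9]{HOS} together with Horn's type I classification (\ref{eqn:TypeI}) (and the remark preceding Proposition~\ref{prop:revjcstaralg} that the two notions of type I coincide for \JWstar-algebras), one has, up to linear isometry, $M \cong \bigoplus_j B_j \BARotimes C_j$ where each $B_j$ is an abelian von Neumann algebra and each $C_j$ is a Cartan factor of finite \rank $n_j$. The exclusion of type I$_1$ and type I$_{\mathrm{spin}}$ summands forces $n_j \geq 3$ for every $j$: a type I$_1$ component contributes a Hilbert space ($\rank 1$) and a type I$_{\mathrm{spin}}$ component a spin factor ($\rank 2$). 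Hence every $C_j$ has $\rank \geq 3$; in particular each $C_j$ is reflexive (by the equivalence of finite rank and reflexivity for Cartan factors) and is neither a Hilbert space nor a spin factor.

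The crux is then to show that $M^{**}$ has zero type I$_1$ part and zero type I$_{\mathrm{spin}}$ part. By Proposition~\ref{prop:NonTypeI1Ideal}(\ref{prop:NonTypeI1Ideala}),(\ref{prop:NonTypeI1Idealc}) this is equivalent to proving that $M$ admits no nonzero Hilbert space representation and no spin factor representation, and I would establish this by controlling \emph{every} Cartan factor representation $\pi \colon M \to C$. Since triple homomorphisms preserve orthogonality, the closures $\overline{\pi(B_j \BARotimes C_j)}$ are mutually orthogonal weak*-closed ideals of the factor $C$, so all but one vanish and $\pi$ is carried by a single homogeneous summand $B_j \BARotimes C_j$. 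Restricting to $1 \otimes C_j \cong C_j$ and exploiting the reflexivity (finite rank) of $C_j$, so that the induced map extends to a weak*-continuous, hence ideal-respecting, surjection of the simple triple $C_j$, one concludes that $C$ again has $\rank \geq 3$. Consequently $C$ can be neither a Hilbert space ($\rank 1$) nor a spin factor ($\rank 2$), which is exactly the assertion that $M$ has neither kind of representation.

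With $M^{**}$ having vanishing type I$_1$ and type I$_{\mathrm{spin}}$ parts, both conclusions are quick. Universal reversibility follows from Theorem~\ref{thm:charaterise}: the absence of Hilbert space representations rules out the obstruction in (\ref{thm:charaterisea}) and the absence of spin factor representations rules out that in (\ref{thm:charateriseb}); alternatively one may observe via the final clause of Proposition~\ref{prop:wurcharact} that $M^{**}$ is weakly universally reversible and then apply Proposition~\ref{URiffWURbidual}. Finally, a one-dimensional representation is precisely a representation onto the one-dimensional Hilbert space $\IC$, so the fact that $M$ has \emph{no} nonzero Hilbert space representation whatsoever yields that $M$ has no one-dimensional representations. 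The step I expect to be the main obstacle is the middle one: upgrading the factor-by-factor fact that each $C_j$ has $\rank \geq 3$ to the global statement that no Cartan factor representation can drop the rank to $1$ or $2$. The delicacy is that such a representation need not be weak*-continuous on $M$; it is the orthogonality/localisation argument combined with the reflexivity of the $C_j$ that makes the rank rigid, and I would be careful that the localisation to a single summand is justified even when $\{j\}$ is infinite and the $C_j$ are infinite-dimensional.
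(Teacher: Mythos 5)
Your overall strategy --- reduce everything to the vanishing of the type I$_1$ and type I$_{\mathrm{spin}}$ parts of $M^{**}$ and then invoke Propositions~\ref{prop:wurcharact} and \ref{URiffWURbidual} (or Theorem~\ref{thm:charaterise}) --- is consistent with the paper's, and your final deductions are fine. The gap is exactly where you yourself flagged it: the localisation step. Writing $M$ as an $\ell_\infty$-sum $\bigoplus_j M_j$ with $M_j = B_j \BARotimes C_j$, your orthogonality argument shows only that \emph{at most one} of the mutually orthogonal weak*-closed ideals $\overline{\pi(M_j)}$ of the factor $C$ is nonzero; it does not show that at least one is. When the index set is infinite, a Cartan factor representation $\pi$ may annihilate every single $M_j$, hence the whole $c_0$-sum $N = \overline{\sum_j M_j}$ (a proper norm-closed ideal of $M$), and factor through $M/N$ --- compare the characters of $\ell_\infty$ killing $c_0$. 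For such $\pi$ the argument yields no information about the rank of $C$, and controlling $M/N$ is essentially the whole difficulty, so this is not a removable technicality. (A secondary unaddressed point: even on one summand you must rule out $\pi$ vanishing on $1 \otimes C_j$ yet not on $B_j \BARotimes C_j$; this is harmless only because the $C_j$ arising from a type I$_n$ \JWstar-algebra with $n$ finite are finite-dimensional, which you do not record.)

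The paper sidesteps representations entirely at this stage. From the coordinatisation of each type I$_{n_i}$ summand ($3 \leq n_i < \infty$) it manufactures, globally in $M$, orthogonal projections $f_1, f_2, f_3$ pairwise exchanged by symmetries and a remainder $g$ with $f_1 + f_2 + f_3 + g = 1$ and $g \simone p \leq f_1 + f_2$. This purely algebraic configuration persists in $M^{**}$ and under cutting by any central projection or passing to any quotient; it forces every central summand of $M^{**}$ to contain three nonzero orthogonal exchangeable projections (so the type I$_1$ and type I$_{\mathrm{spin}}$ parts vanish) and every quotient of $M$ to have dimension at least three (so there are no one-dimensional representations). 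To repair your proof you would need to produce an invariant of this kind that survives passage to $M/N$ and to $M^{**}$, or else work in $M^{**}$ from the outset.
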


\begin{proof}
We may suppose that there is a sequence $(z_i)_i$ of orthogonal
central projections in $M$ with (weak) sum 1 such that each $Mz_i$ is
a type I$_{n_i}$ \JWstar-algebra with $n_i$ strictly increasing and $3
\leq n_i < \infty$, for all $i$.

Fixing $i$, we have
\begin{equation}
\label{eq:zi}
z_i = e_{i,1} + \cdots e_{i, n_i}
\end{equation}
where the $e_{i,j}$ ($1 \leq j \leq n_i$) are orthogonal and exchanged
by symmetries. Taking equivalence classes modulo 3, write $n_i = 3k+r$
where $r =0, 1 \mbox{ or } 3$.
Letting $f_{i, j} = \sum_{s=1}^{k} e_{i,3(s-1)+j}$ ($1 \leq j \leq 3$)
we then have
\[
z_i = f_{i,1} + f_{i,2}+ f_{i,3} + g_i
\]
where
$f_{i,1}$,
$f_{i,2}$ and
$f_{i,3}$ are exchanged by symmetries and
$g_i \simone p_i \leq f_{i,1} + f_{i,2}$
for some projection $p_i$ in $M$.
Repeating this process for each $i$ and putting
$f_j = \sum_i f_{i,j}$ for $1 \leq j \leq 3$, $g = \sum_i g_i$ and $p
= \sum_i p_i$ we have
\begin{equation}
\label{eq:fgp}
f_1+f_2 + f_3 + g =1, 
\,
f_j \simone f_m
\, (1 \leq j, m \leq 3)
\mbox{ and }
g \sim p \leq f_1 + f_2.
\end{equation}
The condition (\ref{eq:fgp}) continues to hold in $M^{**}$, showing
that $M^{**}$ has zero type I$_{\mathrm{spin}}$ part since,
for any nonzero central projection $z$ in $M^{**}$, $zf_1$, $zf_2$ and
$zf_3$ are nonzero orthogonal projections.
Therefore, by Proposition~\ref{prop:wurcharact},
$M^{**}$ is weakly universally reversible.
Hence $M$ is a universally reversible \JCstar-triple
by Proposition~\ref{URiffWURbidual}.

By a similar argument, any quotient of $M$ by an ideal must have
dimension at least 3.
\end{proof}

\begin{theorem}
\label{thm:jwstarurchar}
The following are equivalent for a \JWstar-triple $M$.
\begin{enumerate}[(a)]
\item
\label{thm:jwstarurchara}
$M$ is universally reversible.
\item
\label{thm:jwstarurcharb}
$M$ is weakly universally reversible.
\item
\label{thm:jwstarurcharc}
The type I$_1 \oplus_\infty \mathrm{I}_{\mathrm{spin}}$ part of $M$
is linearly isometric to
\[
B_0 \oplus_\infty B_1 \otimes \ell_2^2
\oplus_\infty B_2 \otimes V_2
\oplus_\infty B_3 \otimes V_3,
\]
where the $B_i$ are abelian von Neumann algebras (or zero).
\end{enumerate}
\end{theorem}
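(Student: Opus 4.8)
The plan is to run the implications $(a)\Rightarrow(b)\Leftrightarrow(c)\Rightarrow(a)$. The implication $(a)\Rightarrow(b)$ is immediate, since every weak*-continuous triple homomorphism into $\BH$ is in particular a triple homomorphism, so universal reversibility forces weak universal reversibility. The equivalence $(b)\Leftrightarrow(c)$ is a repackaging of Proposition~\ref{prop:wurcharact}: the type $\mathrm{I}_1$ part of a \JWstar-triple consists only of Hilbert factors and its type $\mathrm{I}_{\mathrm{spin}}$ part only of spin factors, so the single $\ell_\infty$-decomposition in $(c)$ splits canonically into the type $\mathrm{I}_1$ summand $B_0 \oplus_\infty B_1 \otimes \ell_2^2$ and the type $\mathrm{I}_{\mathrm{spin}}$ summand $B_2 \otimes V_2 \oplus_\infty B_3 \otimes V_3$, which is exactly condition (b) of Proposition~\ref{prop:wurcharact}.

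The substance is therefore $(c)\Rightarrow(a)$, and here I would argue through the bidual. Write $M$ by the Horn--Neher theory as $M \cong \bigl(\bigoplus_j B_j \BARotimes C_j\bigr) \oplus_\infty M_c$, with each $B_j$ abelian, each $C_j$ a Cartan factor in canonical form, and $M_c$ the continuous part. The first step is to observe that, under $(c)$, every $C_j$ is itself universally reversible: the Hilbert-space $C_j$ have dimension at most $2$ and the spin $C_j$ are $V_2$ or $V_3$ (this is precisely what $(c)$ records about the type $\mathrm{I}_1$ and type $\mathrm{I}_{\mathrm{spin}}$ parts), while every remaining $C_j$ is neither a Hilbert space of dimension $\geq 3$ nor a spin factor of dimension $\geq 5$ and so is universally reversible by Remarks~\ref{rems:ur}. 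Hence each $B_j \BARotimes C_j$ is universally reversible, again by Remarks~\ref{rems:ur}, and $M_c$ is universally reversible by Theorem~\ref{thm:URifNoTypIfin}. The temptation is now to conclude directly that the $\ell_\infty$-sum $M$ is universally reversible, but universal reversibility is not evidently stable under infinite $\ell_\infty$-sums, and this is the main obstacle. To circumvent it I would pass to the bidual: by Proposition~\ref{URiffWURbidual} each bidual $(B_j \BARotimes C_j)^{**}$ and $M_c^{**}$ is \emph{weakly} universally reversible, and weak universal reversibility \emph{is} stable under arbitrary $\ell_\infty$-sums (Remarks~\ref{rems:ur}). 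Since $M^{**} \cong \bigl(\bigoplus_j (B_j \BARotimes C_j)^{**}\bigr) \oplus_\infty M_c^{**}$ as \JWstar-triples (the summands persisting as orthogonal weak*-closed ideals with weak*-dense span), it follows that $M^{**}$ is weakly universally reversible, whence $M$ is universally reversible by the converse half of Proposition~\ref{URiffWURbidual}, closing the cycle.

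Two points will need care beyond the infinite-sum detour. The first is the canonical-form bookkeeping in the opening step: one must ensure that the low-dimensional coincidences (for instance a type $(iii)$ factor on a $4$-dimensional space being a spin factor) are already absorbed into the spin summand, so that the genuinely ``other'' $C_j$ really are non-Hilbert, non-spin and hence universally reversible. The second is the identification of $M^{**}$ with the $\ell_\infty$-sum of the biduals of the summands, which rests on separate weak*-continuity of the triple product together with the preservation of orthogonality of ideals under bidual. Both are routine, so the genuine difficulty is concentrated in the recognition that weak universal reversibility — rather than universal reversibility itself — is the property that behaves well under infinite direct sums and biduals.
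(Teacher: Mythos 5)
Your treatment of (a) $\Rightarrow$ (b), of (b) $\Leftrightarrow$ (c) via Proposition~\ref{prop:wurcharact}, and the opening reduction in (c) $\Rightarrow$ (a) (each $C_j$ universally reversible, hence each $B_j \BARotimes C_j$ universally reversible by Remarks~\ref{rems:ur}, and $M_c$ universally reversible by Theorem~\ref{thm:URifNoTypIfin}) all agree with the paper. The gap is precisely the step you dismiss as routine. For an infinite index set the canonical copies of $(B_j \BARotimes C_j)^{**}$ inside $M^{**}$ are indeed mutually orthogonal weak*-closed ideals, but their $\ell_\infty$-sum is a \emph{proper} weak*-closed ideal of $M^{**}$, not all of it. Already for $M = \ell_\infty = \bigoplus_j \IC$ one has $\bigoplus_j \IC^{**} = \ell_\infty$, whereas $M^{**} = C(\beta\IN)^{**}$ is vastly larger; in general $M^{**}$ splits as $\bigl(\bigoplus_j (B_j\BARotimes C_j)^{**}\bigr) \oplus_\infty Z$ with a nonzero ``corona'' ideal $Z$ whose type cannot be read off summand by summand. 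Equivalently, a Cartan factor representation of $\bigoplus_j N_j$ need not factor through any single $N_j$ (think of ultrafilter limits), so knowing that each $N_j^{**}$ has admissible type I$_1$ and type I$_{\mathrm{spin}}$ parts does not control the corresponding parts of $M^{**}$. Hence weak universal reversibility of $M^{**}$ does not follow from that of the summands' biduals, and your cycle does not close.

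The paper closes exactly this hole by a different device. After discarding the type I$_1 \oplus_\infty \mathrm{I}_{\mathrm{spin}}$ part (a finite sum of universally reversible pieces) and the part covered by Theorem~\ref{thm:URifNoTypIfin}, it is left with $M = \bigoplus_i B_i \BARotimes C_i$ with $3 \le \rank(C_i) < \infty$; it chooses a complete tripotent $u_i$ in each summand, sets $u = \bigoplus_i u_i$, and observes that $M_2(u)$ is a type I finite \JWstar-algebra with no type I$_1$ or type I$_{\mathrm{spin}}$ summands. Lemma~\ref{lem:jwstarnoI1noSp} then produces a \emph{uniform finitary certificate} --- three orthogonal projections pairwise exchanged by symmetries whose sum is $1$ modulo a remainder subequivalent to the rest, condition (\ref{eq:fgp}) --- which persists in $M_2(u)^{**}$ and in quotients, giving that $M_2(u)$ is universally reversible with no one-dimensional representations; Corollary~\ref{cor:M2uno1D} (which rests on Theorem~\ref{thm:charaterise}) then transfers this to $M$. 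If you want to salvage your bidual route you would need an argument of this uniform kind to control the corona ideal $Z$; the componentwise information alone is not enough.
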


\begin{proof}
(\ref{thm:jwstarurcharc}) $\Rightarrow$ (\ref{thm:jwstarurchara}):
Assume (\ref{thm:jwstarurcharc}).
Let $J$
denote the
type I$_1 \oplus_\infty \mathrm{I}_{\mathrm{spin}}$ part of $M$.
By 
Remarks~\ref{rems:ur}, $J$ is universally reversible and, passing to
the orthogonal complement of $J$ in $M$, we may suppose $J = \{0\}$.
By Theorem~\ref{thm:URifNoTypIfin}, we may assume further that
$M = \bigoplus_i B_i \BARotimes C_i$,
where the $B_i$ are abelian von Neumann algebras and the $C_i$ are
Cartan factors with $3 \leq \rank (C_i) < \infty$ for all $i$.
For each $i$, choose a complete tripotent $u_i \in B_i \BARotimes C_i$
and put $u = \bigoplus_i u_i$.
Then $u$ is a complete tripotent of $M$ with $M_2(u)$ a
\JWstar-algebra of the form considered in
Lemma~\ref{lem:jwstarnoI1noSp}, and hence $M_2(u)$ is universally
reversible with no one dimensional representations.
By Corollary~\ref{cor:M2uno1D}, $M$ is universally reversible.

The implication
(\ref{thm:jwstarurchara}) $\Rightarrow$ (\ref{thm:jwstarurcharb}) is
clear, and
(\ref{thm:jwstarurcharb}) $\Rightarrow$ (\ref{thm:jwstarurcharc})
is given by Proposition~\ref{prop:wurcharact}.
\end{proof}

\begin{corollary}
\label{corol:wstartroUR}
A \Wstar-TRO is universally reversible (as a \JCstar-triple) if and
only if its type I$_1$ part is linearly isometric to $B_0
\oplus_\infty B_1 \BARotimes \ell_2^2$, where $B_0$ and $B_1$ are
abelian von Neumann algebras.
\end{corollary}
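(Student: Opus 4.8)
The plan is to read this off the \JWstar-triple characterisation Theorem~\ref{thm:jwstarurchar}, exploiting the fact that a \Wstar-TRO is in particular a \JWstar-triple. Applying that theorem, $M$ is universally reversible precisely when its type $\mathrm I_1 \oplus_\infty \mathrm I_{\mathrm{spin}}$ part is linearly isometric to $B_0 \oplus_\infty B_1 \otimes \ell_2^2 \oplus_\infty B_2 \otimes V_2 \oplus_\infty B_3 \otimes V_3$ for abelian von Neumann algebras $B_i$. The whole task is then to show that, for a TRO, the spin summands never contribute a genuine obstruction, so that the only effective constraint is the one on the type $\mathrm I_1$ part recorded in the statement.

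First I would pin down the type $\mathrm I_{\mathrm{spin}}$ part of $M$. Writing it in Horn form $\bigoplus_j B_j \BARotimes C_j$ with the $C_j$ mutually distinct spin factors, each $C_j$ is a \emph{surjective} triple-homomorphic image of $M$: it arises as $\rho \otimes \id_{C_j}$ for a pure state $\rho$ of $B_j$, and surjectivity holds because a spin factor has finite rank. Since $M$ is a TRO, the remark preceding Theorem~\ref{thm:TROurcharact} forces $C_j \cong M_2(\IC) = V_3$. Consequently no $V_2$ term (and no $V_n$ with $n \geq 4$, nor any infinite-dimensional spin factor) can occur, and by distinctness the type $\mathrm I_{\mathrm{spin}}$ part of $M$ is automatically linearly isometric to a single term $B_3 \BARotimes V_3$ — which is exactly the shape allowed by Theorem~\ref{thm:jwstarurchar}(c) with $B_2 = 0$ (and is in any case a type $\mathrm I_2$ von Neumann algebra, hence universally reversible).

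With the spin part shown to be harmless, condition (c) of Theorem~\ref{thm:jwstarurchar} holds for $M$ exactly when the type $\mathrm I_1$ part is linearly isometric to $B_0 \oplus_\infty B_1 \BARotimes \ell_2^2$, which is the assertion. The remaining step is routine bookkeeping: the type $\mathrm I_1$ part is an $\ell_\infty$-sum of pieces $B_j \BARotimes H_j$ with $H_j$ Hilbert spaces, and collecting the one-dimensional $H_j$ into a single abelian summand $B_0$ and the two-dimensional ones into $B_1 \BARotimes \ell_2^2$ shows the displayed form is equivalent to the absence of any $H_j$ of dimension $\geq 3$. The main obstacle is precisely this spin-part reduction: one must be sure that the coincidence $V_3 \cong M_2(\IC)$ places every spin summand of a TRO inside the permitted $B_3 \BARotimes V_3$ term, so that universal reversibility of a \Wstar-TRO is decided solely by its type $\mathrm I_1$ part; everything else is collection of summands.
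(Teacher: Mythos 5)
Your route is the intended one: the paper states this as an immediate consequence of Theorem~\ref{thm:jwstarurchar} together with the remark preceding Theorem~\ref{thm:TROurcharact} (a spin factor that is a triple homomorphic image of a TRO is Jordan *-isomorphic to $M_2(\IC)\cong V_3$), and that is exactly the reduction you carry out. One technical point in your spin-part analysis needs repair, though. You produce the surjection of $M$ onto each spin summand $C_j$ as $\rho\otimes\id_{C_j}$ for a pure state $\rho$ of $B_j$; the paper only asserts this construction for \emph{finite-dimensional} Cartan factors, and for an infinite-dimensional spin factor $C_j$ the map $\rho\otimes\id$ is defined on the algebraic tensor product $B_j\otimes C_j$ but (since $\rho$ need not be normal) does not obviously extend to the weak* closure $B_j\BARotimes C_j$. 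So, as written, your argument does not rule out infinite-dimensional spin summands. The fix is the one the paper itself uses in Proposition~\ref{prop:NonTypeI1Ideal}(c): by Stacey's lemma the type I$_{\mathrm{spin}}$ part $N$ (a weak*-closed triple ideal of $M$, hence itself a \Wstar-TRO) has a \emph{separating} family of spin factor representations, each necessarily surjective by finiteness of rank; the remark forces every such image to be $V_3$, and Lemma~\ref{subtripleOfFRRep}(\ref{subtripleOfFRRepb}) then shows $N$ is $\{D_i\}$-homogeneous with each $D_i$ a spin subfactor of $V_3$, i.e.\ $N\cong B_2\BARotimes V_2\oplus_\infty B_3\BARotimes V_3$, which is the shape permitted by Theorem~\ref{thm:jwstarurchar}(c). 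With that substitution the rest of your argument (the bookkeeping on the type I$_1$ part) is fine.
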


Given a \JCstar-triple $E$ and a Cartan factor $C$, we have that $C$
is linearly isometric to a weak*-closed ideal of $E^{**}$ if and only
if there is a Cartan factor representation $\pi \colon E \to C$.
Recall (see \cite[Proposition 2 and Corollary 4]{FRgn}) that the
weak*-closed linear space of the minimal tripotents in $E^{**}$ is
called the \emph{atomic part $E^{**}_{\mathrm{at}}$}
of $E^{**}$ and $E^{**}_{\mathrm{at}}$ is the $\ell_\infty$ sum of the
distinct Cartan factor ideals of $E^{**}$.
Combining this with
Theorems~\ref{thm:charaterise} and \ref{thm:jwstarurchar}
we have the following.

\begin{corollary}
The following are equivalent for a \JCstar-triple $E$.
\begin{enumerate}[(a)]
\item $E$ is universally reversible.
\item $E^{**}_{\mathrm{at}}$ is universally reversible.
\item $E^{**}$ is universally reversible.
\end{enumerate}
\end{corollary}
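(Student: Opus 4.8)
The plan is to establish the two nontrivial equivalences (a) $\Leftrightarrow$ (c) and (a) $\Leftrightarrow$ (b) separately, the first being essentially formal and the second carrying the real content. For (a) $\Leftrightarrow$ (c) I would simply concatenate two earlier results: Proposition~\ref{URiffWURbidual} already records that $E$ is universally reversible precisely when $E^{**}$ is weakly universally reversible, and Theorem~\ref{thm:jwstarurchar}, applied to the \JWstar-triple $E^{**}$, identifies weak universal reversibility with universal reversibility for a \JWstar-triple. Chaining these two statements yields (a) $\Leftrightarrow$ (c) with no further work.

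For (a) $\Leftrightarrow$ (b) the idea is to reduce both sides, via Theorem~\ref{thm:charaterise}, to the same condition on the Cartan factors occurring in $E^{**}_{\mathrm{at}}$. Call a Cartan factor \emph{bad} if it is a Hilbert space of dimension $\geq 3$ or a spin factor of dimension $\geq 5$; these are exactly the Cartan factors that fail to be universally reversible, and each has finite rank. By Theorem~\ref{thm:charaterise}, $E$ is universally reversible if and only if $E$ admits no representation onto a bad Cartan factor. Using the correspondence recalled just before the statement---Cartan factor representations $\pi \colon E \to C$ are in bijection with weak*-closed ideals of $E^{**}$ linearly isometric to $C$, and $E^{**}_{\mathrm{at}}$ is the $\ell_\infty$-sum of the distinct Cartan factor ideals of $E^{**}$---a bad representation of $E$ exists if and only if $E^{**}_{\mathrm{at}}$ has a bad summand. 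It then remains to see that $E^{**}_{\mathrm{at}}$ is itself universally reversible if and only if it has no bad summand. Applying Theorem~\ref{thm:charaterise} to the \JWstar-triple $E^{**}_{\mathrm{at}} = \bigoplus_\lambda C_\lambda$, the point to check is that a representation of this $\ell_\infty$-sum onto a bad (hence finite-rank) Cartan factor $C$ factors through projection onto a single summand isometric to $C$: since $C$ has finite rank the representation is surjective, and because triple homomorphisms preserve orthogonality, the images of the orthogonal ideals $C_\lambda$ are orthogonal ideals of the factor $C$, each therefore $\{0\}$ or $C$, so exactly one $C_\lambda$ survives and, being a simple finite-rank Cartan factor, maps isometrically onto $C$. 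Thus bad representations of $E^{**}_{\mathrm{at}}$ coincide with bad summands, and (a) $\Leftrightarrow$ (b) follows by comparing the two ``no bad summand'' criteria.

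The step I expect to need the most care is the matching in the previous paragraph. A priori $E^{**}$ may carry type I parts of continuous multiplicity $B \BARotimes C$ with $B$ nonatomic, and such parts contribute no minimal tripotents and hence nothing to $E^{**}_{\mathrm{at}}$; one must be sure this does not allow a bad Cartan factor to hide outside the atomic part. This is exactly what the quoted correspondence prevents: every Cartan factor representation of $E$---in particular every bad one arising from a character of such a $B$---resurfaces as a genuine weak*-closed Cartan factor ideal of $E^{**}$, and therefore as a summand of $E^{**}_{\mathrm{at}}$. The internal consistency of (a) $\Leftrightarrow$ (b) with the already-established (a) $\Leftrightarrow$ (c) serves as a useful check that no such factor escapes detection.
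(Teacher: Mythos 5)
Your derivation of (a) $\iff$ (c) is exactly the paper's: Proposition~\ref{URiffWURbidual} composed with the equivalence of universal and weak universal reversibility for \JWstar-triples given by Theorem~\ref{thm:jwstarurchar}. Likewise, the reduction of (a) to the absence of bad summands of $E^{**}_{\mathrm{at}}$ via Theorem~\ref{thm:charaterise} and the representation--ideal correspondence is sound, including your closing observation that nonatomic pieces $B \BARotimes C$ cannot conceal a bad Cartan factor.

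The gap is in the step asserting that a representation of $E^{**}_{\mathrm{at}} = \bigoplus_\lambda C_\lambda$ onto a finite-rank Cartan factor $C$ factors through a single summand. Orthogonality of the images gives only that \emph{at most} one $C_\lambda$ has nonzero image; it does not give that some $C_\lambda$ survives. When the index set is infinite a triple homomorphism can annihilate every summand and factor through the quotient of the $\ell_\infty$-sum by the closure of $\sum_\lambda C_\lambda$: already for $C_\lambda = \IC$ the characters of $\ell_\infty$ vanishing on $c_0$ are Cartan factor representations onto $\IC$ killing every summand. So from ``all summands good'' you cannot conclude ``no bad representations'' by this route; the representations living on the corona are simply not addressed, and ruling them out directly would essentially require the universal reversibility you are trying to establish. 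The repair is the one the paper intends by citing Theorem~\ref{thm:jwstarurchar} alongside Theorem~\ref{thm:charaterise}: do not apply Theorem~\ref{thm:charaterise} to $E^{**}_{\mathrm{at}}$ at all, but instead read off its type I$_1$ and type I$_{\mathrm{spin}}$ parts directly from the summands (they are $\bigoplus\{C_\lambda : C_\lambda \mbox{ a Hilbert space}\}$ and $\bigoplus\{C_\lambda : C_\lambda \mbox{ a spin factor}\}$) and invoke condition (\ref{thm:jwstarurcharc}) of Theorem~\ref{thm:jwstarurchar}: the absence of bad summands is precisely the statement that these parts have the form $B_0 \oplus_\infty B_1 \otimes \ell_2^2 \oplus_\infty B_2 \otimes V_2 \oplus_\infty B_3 \otimes V_3$ with the $B_i$ (atomic) abelian von Neumann algebras. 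With that substitution your argument closes.
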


We shall conclude this section by showing that the number of 
terms in the definition of universally reversible \JCstar-triples can
be reduced to the smallest possible.
We recall that the
universal TRO
$(\TROu(A), \utro A)$ coincides with the
universal \Cstar-algebra 
$ (C^*_{\mathrm{J}}(A), \beta_A)$
when $A$ is a \JCstar-algebra \cite[Proposition 3.7]{BunceFeelyTimoney}.

\begin{theorem}
\label{thm:5term}
The following are equivalent for a \JCstar-triple $E$.
\begin{enumerate}[(a)]
\item
\label{thm:5terma}
$E$ is universally reversible.
\item
\label{thm:5termb}
$\trop{x_1}{\cdots}{x_5} +
\trop{x_5}{\cdots}{x_1}  \in \utro E(E)$ whenever
$x_1, \ldots, x_5 \in \utro E(E)$.

\item
\label{thm:5termc}
$\trop{y_1}{\cdots}{y_5} +
\trop{y_5}{\cdots}{y_1}  \in \pi (E)$ whenever
$y_1, \ldots, y_5 \in \pi(E)$ and $\pi \colon E \to \BH$ is a triple
homomorphism.
\end{enumerate}
\end{theorem}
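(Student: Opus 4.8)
The plan is to establish the cycle (\ref{thm:5terma}) $\Rightarrow$ (\ref{thm:5termb}) $\Rightarrow$ (\ref{thm:5termc}) $\Rightarrow$ (\ref{thm:5terma}), in which the first two implications are formal consequences of the universal property of $\TROu(E)$ and only the last carries genuine content. For (\ref{thm:5terma}) $\Rightarrow$ (\ref{thm:5termb}): since $\utro E \colon E \to \TROu(E) \subseteq \BH$ is a triple homomorphism, universal reversibility of $E$ applied to this particular homomorphism says exactly that every reversible element built from $\utro E(E)$ lies in $\utro E(E)$, and the five-term products are the instance $n=2$.

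For (\ref{thm:5termb}) $\Rightarrow$ (\ref{thm:5termc}): given a triple homomorphism $\pi \colon E \to \BH$, the universal property furnishes a TRO homomorphism $\tilde\pi \colon \TROu(E) \to \BH$ with $\tilde\pi \circ \utro E = \pi$. Because a TRO homomorphism preserves the ternary product, it preserves iterated products and hence the symmetrised five-term expression; thus for $x_1,\dots,x_5 \in E$ I would compute $\trop{\pi(x_1)}{\cdots}{\pi(x_5)} + \trop{\pi(x_5)}{\cdots}{\pi(x_1)} = \tilde\pi\big(\trop{\utro E(x_1)}{\cdots}{\utro E(x_5)} + \trop{\utro E(x_5)}{\cdots}{\utro E(x_1)}\big)$, which by (\ref{thm:5termb}) is the image under $\tilde\pi$ of an element of $\utro E(E)$ and so lies in $\tilde\pi(\utro E(E)) = \pi(E)$.

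The substance is (\ref{thm:5termc}) $\Rightarrow$ (\ref{thm:5terma}), which I would prove by contraposition using the structural characterisation of Theorem~\ref{thm:charaterise}. If $E$ is not universally reversible, that theorem produces a triple homomorphism $\pi \colon E \to C$ onto a Cartan factor $C$ that is either a Hilbert space of dimension $\geq 3$ or a spin factor of dimension $\geq 5$; since such $C$ has finite rank the map is surjective, $\pi(E) = C$. Composing with $\utro C$ gives a triple homomorphism $\utro C \circ \pi \colon E \to \TROu(C) \subseteq \BH$, and feeding it into (\ref{thm:5termc}) shows that $\utro C(C)$ is closed under the five-term symmetrised product inside $\TROu(C)$; that is, $C$ would be \emph{five-term reversible} in its own universal TRO. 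The proof is then completed by the following key point, which I would isolate as a lemma: a Hilbert space of dimension $\geq 3$ and a spin factor of dimension $\geq 5$ each fail five-term reversibility, i.e.\ admit five elements whose symmetrised five-term product escapes the space.

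I expect this last lemma to be the main obstacle, since Remarks~\ref{rems:ur} only records the failure of universal reversibility of these Cartan factors at some (a priori unspecified) length, whereas here the failure must be exhibited already at length five. For the Hilbert space this is verified by an explicit computation in the fermionic (antisymmetric) Fock representation $\Fock$, where $\utro H$ sends an orthonormal basis $(e_k)$ to the creation operators $(c_k)$ obeying $c_i c_j + c_j c_i = 0$ and $c_i c_j^* + c_j^* c_i = \delta_{ij}$: taking $(x_1,\dots,x_5) = (e_1,e_1,e_2,e_3,e_3)$ one finds that $s := c_1 c_1^* c_2 c_3^* c_3 + c_3 c_3^* c_2 c_1^* c_1$ annihilates the vacuum yet sends $e_1$ to $-e_1 \wedge e_2$, so that $s$ cannot be a linear combination $\sum_k \lambda_k c_k$ (such a combination is determined by its action on the vacuum) and hence $s \notin \utro H(H)$. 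The spin factor $V_4$ is treated by the analogous computation in the Clifford/CAR model of its universal TRO, and the general cases $\dim \geq 3$, $\dim \geq 5$ follow since the chosen witnesses involve only a three- (respectively five-) dimensional subtriple. Assembling these explicit obstructions---and checking that they are inherited correctly by the larger factors---is the crux; everything else is formal.
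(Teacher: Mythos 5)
Your overall architecture coincides with the paper's: the implications into and out of (\ref{thm:5termb}) are handled exactly as in the paper via the universal property of $\TROu(E)$, and the substance of (\ref{thm:5termc}) $\Rightarrow$ (\ref{thm:5terma}) is, in both treatments, the reduction via Theorem~\ref{thm:charaterise} to exhibiting five-term witnesses for Hilbert spaces of dimension $\geq 3$ and for spin factors of dimension $\geq 5$. Your Fock-space witness for the Hilbert space case is correct, and is in fact more self-contained than the paper, which cites \cite[Theorem 5.1 and proof]{BunceFeelyTimoney} at this point: with $s = c_1c_1^*c_2c_3^*c_3 + c_3c_3^*c_2c_1^*c_1$ one has $s\Omega = 0$ while $se_1 = -e_1\wedge e_2 \neq 0$, and since the closed span of the creation operators is $\{c_x : x \in H\}$ with $c_x\Omega = x$, indeed $s \notin \pi(H)$; this argument is insensitive to the dimension of $H$ once it is $\geq 3$.

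The one genuinely incomplete step is the spin factor case, which you dispatch as ``the analogous computation in the Clifford/CAR model''. It is not analogous in any mechanical sense: there is no vacuum-vector argument available, and the natural obstruction in the Clifford algebra is a product of \emph{four} anticommuting symmetries, which is not on its face a five-term ternary word. The paper's device is to invoke Cohn's theorem \cite{Cohn1954} (non-reversibility of a special Jordan algebra is already witnessed by four-letter words) to produce $y_1,\dots,y_4 \in \utro F(F)_{sa}$ with $y_1y_2y_3y_4 + y_4y_3y_2y_1 \notin \utro F(F)_{sa}$, and then to set $y_5 = 1$ (available because $(\TROu(F),\utro F) = (C^*_{\mathrm{J}}(F), \beta_F)$ is unital with $1 \in \utro F(F)$), so that $\trop{y_1}{\cdots}{y_5} + \trop{y_5}{\cdots}{y_1} = y_1y_2y_3y_4 + y_4y_3y_2y_1$ since the $y_i$ are self-adjoint. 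Concretely one may take four anticommuting symmetries $e_1,\dots,e_4$ from the spin system, for which the witness is $e_1e_2e_3e_4 + e_4e_3e_2e_1 = 2e_1e_2e_3e_4$, a degree-four Clifford monomial lying outside the spin factor precisely when its dimension is $\geq 5$. You should either reproduce this unit-in-the-fifth-slot trick or write the degree-four monomial down explicitly; as it stands the spin-factor half of your key lemma is asserted rather than proved, and it is exactly the half where the reduction from five ternary letters to four associative ones requires an idea.
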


\begin{proof}
(\ref{thm:5terma}) $\Rightarrow$ (\ref{thm:5termb})
and
(\ref{thm:5termc}) $\Rightarrow$ (\ref{thm:5termb})
are clear.

(\ref{thm:5termb}) $\Rightarrow$ (\ref{thm:5termc}):
Assume (\ref{thm:5termc}).
Let $\pi \colon E  \to \BH$ be a triple homomorphism, $y_i \in \pi(E)$
and $x_i \in E$ with $\pi(x_i) = y_i$ for $1 \leq i \leq 5$.
Let $\tilde{\pi} \colon \TROu(E) \to \BH$ be the TRO homomorphism with
$\pi = \tilde{\pi} \circ \utro E$. Then with $z_i = \utro E(x_i)$,
applying $\tilde{\pi}$ to
$\trop{z_1}{\cdots}{z_5} +
\trop{z_5}{\cdots}{z_1}  \in \utro E(E)$ gives
$\trop{y_1}{\cdots}{y_5} +
\trop{y_5}{\cdots}{y_1}  \in \tilde{\pi}(\utro E (E)) = \pi (E)$.

(\ref{thm:5termc}) $\Rightarrow$ (\ref{thm:5terma}):
Assume (\ref{thm:5termc}).
Let $\psi \colon E \to F \subset \BH$ be a surjective triple
homomorphism and consider
$\pi \colon E \to \utro F(F) \subset \TROu(F)$ where $\pi$ denotes
$\utro F \circ \psi$.
By 
\cite[Theorem 5.1 and proof]{BunceFeelyTimoney},
if $F$ is a Hilbert space of dimension $\geq 3$ then $\utro F \colon F
\to \TROu(F)$ fails condition (\ref{thm:5termc}), as therefore does
$\pi$.
If $F$ is a spin factor of dimension $\geq 5$ then (using
$(\TROu(F), \utro F)
= (C^*_{\mathrm{J}}(F), \beta_F)$)
$\utro F(F_{sa}) = \utro F (F)_{sa}$ is not reversible in $\TROu(F)$
so that by \cite{Cohn1954}
(or see \cite[p. 142]{HOS})
there exist $y_1, y_2, y_3, y_4 \in 
\utro F (F)_{sa}$ with
$y_1y_2y_3y_4
+y_4y_3y_2y_1 \notin \utro F (F)_{sa}$.
Thus, in this case $\pi$ fails condition
(\ref{thm:5termc}) for $y_1, y_2, y_3, y_4, y_5$ with $y_5 = 1$.

Therefore $E$ is universally reversible by
Theorem~\ref{thm:charaterise}.
\end{proof}

\section{\JC-operator spaces}
\label{sec:JCOpspaces}

We recall from \S\ref{sec:preliminaries} that the \JC-operator space
structures of a \JCstar-triple $E$, the operator space structures
determined by triple embeddings $\pi \colon E \to \BH$, are the
$E_{\mathcal{I}}$ induced by the maps $E \to \TROu(E)/\mathcal{I}$
($x \mapsto \utro E(x) + \mathcal{I}$)
as
$\mathcal{I}$ ranges over the ideals of $\TROu(E)$ having vanishing
intersection with $\utro E(E)$ (the operator space ideals of
$\TROu(E)$).

Let $T$ be a subTRO of $\BH$. We use $T^\OP$ to denote the identical
image of $T$ in the opposite \Cstar-algebra $\BH^\OP$. Given a
transposition $x \mapsto x^t$ on $\BH$ we write $T^t = \{ x^t : x \in
\BH\}$. Thus $\id \colon T \to T^\OP$ and the map
$T^\OP \to T^t$ ($x \mapsto x^t$) are a TRO anti-isomorphism and a TRO
isomorphism, respectively.

By definition, $T$ is an \emph{abelian TRO} if $\trop x y z = \trop z y
x$ for all $x, y, z \in T$.
Evidently, $T$ is an abelian TRO if and only if $\id \colon T \to
T^\OP$ is a TRO isomorphism (equivalently, a complete isometry).

\begin{lemma}
\label{lem:TTop}
Let $T$ be a TRO. Then
\begin{enumerate}[(a)]
\item 
\label{lem:TTopa}
$\id \colon T \to T^\OP$ is completely contractive if and only if $T$
is abelian;

\item
\label{lem:TTopb}
a TRO antihomomorphism $\pi \colon T \to \BH$ is completely
contractive if and only if $\pi(T)$ is abelian.
\end{enumerate}
\end{lemma}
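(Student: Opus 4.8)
The plan is to treat (a) as the crux and to obtain (b) from it by a factorisation argument; in both parts the ``if'' direction is essentially a restatement of facts already in hand, so the real work lies in the ``only if'' directions.

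For (a), the ``if'' direction is immediate: if $T$ is abelian then, by the remark preceding the lemma, $\id\colon T\to T^\OP$ is even a complete isometry, hence completely contractive. For the converse I would exploit the self-duality of complete contractivity under passage to the opposite operator space. The point is that the concrete $T^\OP$ (the image of $T$ in $\BH^\OP$) is completely isometric to the abstract opposite operator space of $T$: transposing matrix indices gives $M_n(\BH^\OP)\cong M_n(\BH)^\OP$ isometrically, so that $\|[a_{ij}]\|_{M_n(T^\OP)}=\|[a_{ji}]\|_{M_n(T)}$. From this formula a one-line computation shows that a linear map is completely contractive if and only if the same underlying map between the opposite operator spaces is completely contractive. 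Applying this to $\id\colon T\to T^\OP$ (and using $(T^\OP)^\OP=T$), the hypothesis that $\id\colon T\to T^\OP$ is completely contractive forces $\id\colon T^\OP\to T$ to be completely contractive as well. As these two maps are mutually inverse, $\id\colon T\to T^\OP$ is then a surjective complete isometry, whence $T$ is abelian by the remark.

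For (b) I would reduce to (a). Writing $S=\pi(T)$, which is a sub-TRO of $\BH$ because $\pi$ is a TRO antihomomorphism, the inclusion $\iota\colon S\hookrightarrow\BH$ is completely isometric, so it suffices to analyse the surjective antihomomorphism $\pi_0\colon T\to S$. Composing with the anti-isomorphism $\id\colon S\to S^\OP$ turns $\pi_0$ into a surjective \emph{TRO homomorphism} $\theta\colon T\to S^\OP$, which is automatically completely contractive and, being surjective, a complete quotient map. Since $\pi_0=\psi\circ\theta$ with $\psi=\id\colon S^\OP\to S$, the maps $\theta_n$ lift elements of the open unit ball of $M_n(S^\OP)$ into the open unit ball of $M_n(T)$; hence $\pi_0$ (equivalently $\pi$) is completely contractive if and only if $\psi=\id\colon S^\OP\to S$ is. By part (a) applied to the TRO $S^\OP$ (whose opposite is $S$), the latter holds if and only if $S^\OP$, equivalently $S=\pi(T)$, is abelian. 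Conversely, if $S$ is abelian then $\psi$ is a complete isometry and $\pi=\iota\circ\psi\circ\theta$ is a composite of completely contractive maps, giving the ``if'' direction.

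The step I expect to require the most care is the identification of the concrete $T^\OP$ with the opposite operator space together with the resulting self-duality of complete contractivity in (a); everything else there is bookkeeping once that is in place. In (b) the only genuine input beyond (a) is the standard fact that a surjective TRO homomorphism is a complete quotient map, which legitimises the lifting used to pass complete contractivity from $\pi$ down to $\psi$. I note finally that (a) is precisely the special case $\pi=\id\colon T\to T^\OP$ of (b), so the two statements are consistent; it is only to avoid circularity that (a) is proved first by the direct self-duality argument and then fed into (b).
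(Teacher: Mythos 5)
Your argument is correct and follows essentially the paper's own route: part (a) rests on the self-duality of complete contractivity under passage to opposite operator spaces (which you verify directly via the transpose identification $M_n(\BH^\OP)\cong M_n(\BH)^\OP$, where the paper simply cites \cite[1.1.25]{BlecherleMerdy}), and part (b) reduces to (a) by turning the antihomomorphism into a TRO homomorphism onto the opposite of its image. The only cosmetic difference in (b) is that the paper passes to $T/\ker\pi$ and composes the induced completely contractive bijection with its inverse (a TRO isomorphism onto $S^\OP$), whereas you descend through the complete quotient map afforded by the surjective TRO homomorphism $\theta$; both steps are standard and the proofs are otherwise the same.
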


\begin{proof}
\begin{enumerate}[(a)]
\item 
If $\id \colon T \to T^\OP$ is completely contractive, then so is 
$\id \colon T^\OP \to (T^\OP)^\OP = T$ by 
\cite[1.1.25]{BlecherleMerdy},
implying that it is a complete isometry, and hence that $T$ is
abelian. The converse is clear.

\item
Let $\pi \colon T \to \BH$ be a completely contractive TRO
antihomomorphism. Let $S$ denote $T/\ker \pi$ and let $\tilde\pi
\colon S \to \pi(T)$ denote the induced TRO antihomomorphism, also
completely contractive (by
\cite[1.1.15]{BlecherleMerdy}).
Since $\tilde \pi^{-1} \colon \pi(T) \to S^\OP$ is a TRO isomorphism,
$\tilde{\pi}^{-1} \circ \pi \colon S \to S^\OP$, which is the identity
map, is completely contractive, so that $S$ is abelian by
(\ref{lem:TTopa}).
Conversely, a TRO antihomomorphism into an abelian TRO is a TRO
homomorphism and so is completely contractive.
\end{enumerate}
\end{proof}

A \JCstar-triple $E$ is said to be \emph{abelian} if
\[
\tp x y {\tp a b c} = \tp {\tp x y a} bc
\]
for all $x,y,a,b,c \in E$.
An abelian TRO is an abelian \JCstar-triple. On the other hand, given
an abelian \JCstar-subtriple $E$ of $\BH$, the abelian \JWstar-triple
$\bar{E}$
(the weak*-closure of $E$)
is linearly isometric to an abelian von Neumann algebra by
\cite[(3.11)]{HornPredual}
and we may choose a unitary tripotent $u \in \bar{E}$.
Then $\bar{E}_2(u)$  is an abelian von Neumann subalgebra of
$\BH_2(u)$, the binary product and involution of the latter being
given by
$x \bullet y = \trop x u y$ and $x^\# = \trop u x u$.
In particular, with $a, b, c \in E$ (since $E \subseteq \bar{E} =
\bar{E}_2(u)$) we have
$\trop a b c = ab^*c = a \bullet b^\# \bullet c = c \bullet b^\#
\bullet a = cb^*a$. Thus $E$ is a TRO and the inclusion $E
\hookrightarrow \bar{E}_2(u)$ is a TRO isomorphism onto a subTRO of
$\bar{E}_2(u)$.

\begin{proposition}
\label{prop:abeliancase}
The following are equivalent for a \JCstar-subtriple $E$ of $\BH$.
\begin{enumerate}[(a)]
\item
\label{prop:abeliancasea}
$E$ is an abelian \JCstar-triple.
\item
\label{prop:abeliancaseb}
$E$ is an abelian TRO.
\item
\label{prop:abeliancasec}
$E$ is completely isometric to a subTRO of an abelian \Cstar-algebra.
\item
\label{prop:abeliancased}
$E$ has a separating family of representations onto $\IC$.
\item
\label{prop:abeliancasee}
$\TROu(E) = E$
\end{enumerate}
\end{proposition}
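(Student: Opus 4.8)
The plan is to prove the cycle (a)~$\Rightarrow$~(b)~$\Rightarrow$~(c)~$\Rightarrow$~(d)~$\Rightarrow$~(a), which settles the equivalence of the first four conditions, and then to attach (e) by proving (a)~$\Rightarrow$~(e) and (e)~$\Rightarrow$~(a). Most of the work for (a)~$\Leftrightarrow$~(b) and for (b)~$\Rightarrow$~(c) is already contained in the discussion immediately preceding the statement. There it is noted that an abelian TRO is an abelian \JCstar-triple, which gives (b)~$\Rightarrow$~(a); and, conversely, for an abelian \JCstar-subtriple $E$ of $\BH$ it is shown that $\trop{a}{b}{c} = cb^*a = \trop{c}{b}{a}$ and that the inclusion $E \hookrightarrow \bar{E}_2(u)$ is a TRO isomorphism onto a subTRO of the abelian von Neumann algebra $\bar{E}_2(u)$. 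The first identity is exactly the statement that $E$ is an abelian TRO, giving (a)~$\Rightarrow$~(b); and since a TRO isomorphism is a complete isometry and an abelian von Neumann algebra is an abelian \Cstar-algebra, the second assertion gives (b)~$\Rightarrow$~(c). So for these implications I would simply invoke the preceding paragraph.

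For (c)~$\Rightarrow$~(d) I would pass through characters. Suppose $E$ is completely isometric to a subTRO $E'$ of an abelian \Cstar-algebra $A$; the complete isometry is then a TRO isomorphism, so it suffices to exhibit a separating family of representations of $E'$ onto $\IC$. Each character $\chi$ of $A$ is a $*$-homomorphism, so $\chi(\trop{x}{y}{z}) = \chi(x)\overline{\chi(y)}\chi(z)$, and hence the restriction $\chi|_{E'}$ is a triple homomorphism into $\IC$ with range $\{0\}$ or $\IC$. Since the characters of $A$ separate the points of $A \supseteq E'$, the nonzero restrictions constitute a separating family of representations of $E'$ onto $\IC$. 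For (d)~$\Rightarrow$~(a), observe first that $\IC$ is itself an abelian \JCstar-triple, the identity $\tp{x}{y}{\tp{a}{b}{c}} = \tp{\tp{x}{y}{a}}{b}{c}$ holding trivially there. Given a separating family $\{ \chi_\lambda \colon E \to \IC \}$ of triple homomorphisms and arbitrary $x,y,a,b,c \in E$, applying each $\chi_\lambda$ to the difference $\tp{x}{y}{\tp{a}{b}{c}} - \tp{\tp{x}{y}{a}}{b}{c}$ returns $0$, because $\chi_\lambda$ preserves the triple product and $\IC$ is abelian; separation then forces the difference itself to vanish, so $E$ is abelian.

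To bring in (e) I would use the canonical involution $\Phi$ of $\TROu(E)$. For (a)~$\Rightarrow$~(e): the image $\utro E(E)$ is an abelian \JCstar-subtriple of the TRO $\TROu(E)$, hence an abelian \JCstar-subtriple of some $\mathcal{B}(\tilde H)$ into which $\TROu(E)$ embeds as a subTRO. Applying to $\utro E(E)$ the very fact used for (a)~$\Rightarrow$~(b) --- that an abelian \JCstar-subtriple of $\mathcal{B}(\tilde H)$ is closed under the ternary product $\trop{x}{y}{z} = xy^*z$ --- shows that $\utro E(E)$ is already a subTRO of $\TROu(E)$. As $\TROu(E)$ is by definition the TRO generated by $\utro E(E)$, this forces $\TROu(E) = \utro E(E)$, identified with $E$, which is (e). For (e)~$\Rightarrow$~(a): if $\utro E(E) = \TROu(E)$, then $\Phi$, being the antiautomorphism fixing every element of $\utro E(E)$, fixes all of $\TROu(E)$ and so equals the identity; since $\Phi$ is an antiautomorphism we obtain $\trop{x}{y}{z} = \Phi(\trop{x}{y}{z}) = \trop{z}{y}{x}$ for all $x,y,z \in \TROu(E)$, so $\TROu(E)$ is an abelian TRO and hence an abelian \JCstar-triple, whence so is $E \cong \utro E(E)$.

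The genuinely routine parts are the character computation and the triple identity in $\IC$. The one step demanding care, and the conceptual heart of the argument, is (a)~$\Rightarrow$~(e): the crucial structural input --- that an abelian \JCstar-subtriple of $\BH$ is automatically a subTRO --- must be applied not to $E$ itself but to its canonical image $\utro E(E)$ sitting inside the abstract TRO $\TROu(E)$, and then combined with the generation property of the universal TRO. Together with the clean collapse $\Phi = \id$ in (e)~$\Rightarrow$~(a), this is where the universal-TRO condition is tied to abelianness.
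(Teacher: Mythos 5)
Your proof is correct, and it agrees with the paper's on the structural core while differing in two places worth recording. For (a) $\iff$ (b) $\iff$ (c) both arguments just invoke the preamble. The genuine divergence is in how (d) is attached: the paper simply cites Kaup (Proposition 6.2 of the fibre-bundle paper) for (a) $\iff$ (d), whereas you prove (c) $\Rightarrow$ (d) by restricting characters of the ambient abelian \Cstar-algebra and (d) $\Rightarrow$ (a) by applying a separating family of triple homomorphisms into the trivially abelian triple $\IC$ to the associator $\tp xy{\tp abc} - \tp{\tp xya}bc$. This makes the proposition citation-free, at the cost of routing the hard direction (a) $\Rightarrow$ (d) through the chain (a) $\Rightarrow$ (b) $\Rightarrow$ (c) $\Rightarrow$ (d); that is legitimate because the preamble already supplies the concrete abelian von Neumann algebra $\bar E_2(u)$. (One cosmetic point: in (c) $\Rightarrow$ (d) you call the complete isometry $E \to E'$ a TRO isomorphism before knowing $E$ is a TRO; all you need, and all you use, is that a surjective linear isometry of \JBstar-triples is a triple isomorphism.) For (e), the paper proves (b) $\Rightarrow$ (e) by noting that every triple homomorphism from an abelian TRO into a \Cstar-algebra is a TRO homomorphism, so that $\utro E$ itself is one and its image generates, and proves (e) $\Rightarrow$ (b) by using the universal property to make $\id \colon E \to E^{\OP}$ a TRO isomorphism; you instead apply the preamble's ``abelian \JCstar-subtriple of $\BH$ is a subTRO'' fact directly to $\utro E(E)$ inside a faithful representation of $\TROu(E)$, and for the converse observe that the canonical involution $\Phi$ is forced to be the identity, whence $\trop xyz = \trop zyx$. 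These are the same ideas in slightly different clothing --- $\Phi$ is precisely the antiautomorphism induced by $\id \colon E \to E^{\OP}$ --- so the net effect of your version is a modestly longer but self-contained proof.
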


\begin{proof}
The equivalence of
(\ref{prop:abeliancasea}),
(\ref{prop:abeliancaseb}) and
(\ref{prop:abeliancasec}) is verified by the preamble, and
(\ref{prop:abeliancasea}) $\iff$
(\ref{prop:abeliancased})
is shown in
\cite[Proposition 6.2]{KaupFibreBund}.
If 
(\ref{prop:abeliancaseb}) holds
then every triple homomorphism from $E$ into a \Cstar-algebra is a TRO
homomorphism, giving
(\ref{prop:abeliancasee}).
Conversely
(\ref{prop:abeliancasee})
implies that $E$ is a TRO and the universal property
of $\TROu(E)$ implies that $\id \colon E \to E^\OP$ is a TRO
isomorphism, giving 
(\ref{prop:abeliancaseb}).
\end{proof}

\begin{corollary}
Every abelian \JCstar-triple has a unique \JC-operator space
structure.
\end{corollary}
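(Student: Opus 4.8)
The plan is to read the result off directly from Proposition~\ref{prop:abeliancase}. Recall from the opening of \S\ref{sec:JCOpspaces} that the distinct \JC-operator space structures on a \JCstar-triple $E$ are exactly the $E_{\mathcal{I}}$ indexed by the operator space ideals $\mathcal{I}$ of $\TROu(E)$, that is, the (norm closed) ideals $\mathcal{I}$ of $\TROu(E)$ satisfying $\utro E(E) \cap \mathcal{I} = \{0\}$. So the whole task reduces to counting these ideals when $E$ is abelian.

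Suppose then that $E$ is an abelian \JCstar-triple. First I would invoke the equivalence (\ref{prop:abeliancasea}) $\iff$ (\ref{prop:abeliancasee}) of Proposition~\ref{prop:abeliancase}, which gives $\TROu(E) = \utro E(E)$; in other words the canonical embedding $\utro E$ is surjective, so all of $\TROu(E)$ is exhausted by the image of $E$. Consequently every ideal $\mathcal{I}$ of $\TROu(E)$ is automatically contained in $\utro E(E)$, whence $\utro E(E) \cap \mathcal{I} = \mathcal{I}$. The defining condition $\utro E(E) \cap \mathcal{I} = \{0\}$ for an operator space ideal therefore forces $\mathcal{I} = \{0\}$.

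Thus $\{0\}$ is the only operator space ideal of $\TROu(E)$, and the unique \JC-operator space structure on $E$ is $E_{\{0\}}$, the structure arising from the triple embedding $E \cong \TROu(E) \hookrightarrow \BH$. There is essentially no obstacle to overcome here: the entire content is carried by the identity $\TROu(E) = E$ already established in Proposition~\ref{prop:abeliancase}, and the only point requiring any care is the harmless identification of $E$ with its image $\utro E(E)$ inside $\TROu(E)$.
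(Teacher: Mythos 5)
Your argument is correct and is exactly the intended one: the paper states this corollary without proof immediately after Proposition~\ref{prop:abeliancase}, relying on the equivalence (a) $\iff$ (e) there, so that $\TROu(E)=\utro E(E)$ forces every operator space ideal to be zero and hence leaves only the structure $E_{\{0\}}$. Note that you only need surjectivity of the correspondence $\mathcal{I}\mapsto E_{\mathcal{I}}$ (which is all that is available at this point in the paper, injectivity being Theorem~\ref{thm:201206510} for universally reversible triples), and your proof uses precisely that.
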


\begin{theorem}
\label{thm:TROuofT}
Let $T \subset \BH$ be a TRO with no nonzero representations onto a Hilbert space
of any dimension other (possibly)
than two. Suppose $x \mapsto x^t$ is a
transposition of $\BH$. Then
\begin{enumerate}[(a)]
\item
\label{thm:TROuofTa}
$\TROu(T) = T \oplus T^t$ with $\utro T (x) = x \oplus x^t$ and
canonical involution $\Phi(x \oplus y^t) = y \oplus x^t$ (for $x, y
\in T$);
\item
\label{thm:TROuofTb}
$T$ has at least three distinct \JC-operator space structures.
\end{enumerate}
\end{theorem}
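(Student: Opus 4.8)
The plan is to prove both parts by exploiting the universal property of $\TROu(T)$ together with the fact that $T$ is universally reversible. First, the hypothesis that $T$ has no representations onto Hilbert spaces of dimension $\neq 2$ means in particular there are no representations onto Hilbert spaces of dimension $\geq 3$, so by Theorem~\ref{thm:TROurcharact} the TRO $T$ is universally reversible. By the characterisation recalled in the preliminaries, this gives $\utro T(T) = \{ x \in \TROu(T) : \Phi(x) = x\}$, the fixed-point set of the canonical involution.

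\medskip\noindent\textbf{Part (a).} To identify $\TROu(T)$ concretely, I would exhibit the pair $(T \oplus T^t, x \mapsto x \oplus x^t)$ and verify it has the universal property. The map $\iota \colon x \mapsto x \oplus x^t$ is an injective triple homomorphism into the TRO $T \oplus T^t$, and since $\id \colon T \to T$ and $x \mapsto x^t$ are a TRO homomorphism and TRO isomorphism respectively, the image $\iota(T)$ generates $T \oplus T^t$ as a TRO --- here one checks that the subTRO generated by the diagonal-type elements $x \oplus x^t$ is all of $T \oplus T^t$, which uses that products $\trop{x_1}{\cdots}{x_{2k+1}}$ and their reverses $\trop{x_{2k+1}}{\cdots}{x_1}$ appear in the two summands and, because $T$ is universally reversible so the two need not coincide, together they fill out both coordinates. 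The universal property then follows: given any triple homomorphism $\pi \colon T \to \BH$, decompose it via the structure of triple homomorphisms on a universally reversible TRO into a TRO homomorphism part and a TRO antihomomorphism part; the antihomomorphism factors through $T^t$ (equivalently $T^{\OP}$) as a homomorphism, yielding the required TRO homomorphism $\tilde\pi \colon T \oplus T^t \to \BH$. Uniqueness of the universal TRO then forces $\TROu(T) = T \oplus T^t$. The involution $\Phi$ must fix $\utro T(T)$, i.e.\ fix each $x \oplus x^t$, and be an antiautomorphism of order $2$; the swap $x \oplus y^t \mapsto y \oplus x^t$ does exactly this, and is the unique such map. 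The main obstacle here is verifying that $\iota(T)$ genuinely generates both summands rather than a proper subTRO (such as the diagonal), and it is precisely universal reversibility --- the failure of $T$ to be abelian in general --- that guarantees the two coordinates are independent.

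\medskip\noindent\textbf{Part (b).} Once (a) is established, I would produce three distinct operator space ideals of $\TROu(T) = T \oplus T^t$, each giving a distinct \JC-operator space structure by the correspondence recalled in the preliminaries. The natural candidates are $\mathcal{I}_1 = \{0\} \oplus T^t$, $\mathcal{I}_2 = T \oplus \{0\}$, and $\mathcal{I}_3 = \{0\}$. Each $\mathcal{I}_k$ is an ideal of $T \oplus T^t$ meeting $\utro T(T) = \{ x \oplus x^t : x \in T\}$ only in $0$ (since $x \oplus x^t \in \{0\} \oplus T^t$ forces $x = 0$, and similarly for the others), so all three are operator space ideals. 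The induced structures are $E_{\mathcal{I}_1} \cong T$, $E_{\mathcal{I}_2} \cong T^t \cong T^{\OP}$, and $E_{\mathcal{I}_3} = T \oplus T^t$ itself. To see these are genuinely distinct as operator spaces, I would argue that $T$ and $T^{\OP}$ are not completely isometric unless $T$ is abelian --- by Lemma~\ref{lem:TTop}(\ref{lem:TTopa}), $\id \colon T \to T^{\OP}$ is completely contractive only when $T$ is abelian --- and the hypotheses (no representations onto $\IC$, as an abelian TRO would have by Proposition~\ref{prop:abeliancase}) rule out the abelian case, while the full sum $T \oplus T^t$ is visibly a third structure of strictly larger ``size.'' Thus $T$ admits at least three distinct \JC-operator space structures. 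The delicate point in part (b) is confirming non-abelianness from the stated hypotheses so that $E_{\mathcal{I}_1}$ and $E_{\mathcal{I}_2}$ are inequivalent, which follows once one knows $T$ is not abelian (an abelian TRO would admit representations onto $\IC$, contrary to the finite-rank structure forced by the hypothesis).
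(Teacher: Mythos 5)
Your part (b) is essentially the paper's argument: the same three ideals $\{0\}$, $T\oplus\{0\}$, $\{0\}\oplus T^t$, with the first two quotient structures distinguished via Lemma~\ref{lem:TTop}(\ref{lem:TTopa}) and the non-abelianness of $T$ (which, as you note, follows from the absence of one-dimensional representations together with Proposition~\ref{prop:abeliancase}). However, your claim that $T\oplus T^t$ is ``visibly a third structure of strictly larger size'' is not an argument: distinctness of two \JC-operator space structures on $T$ means the identity map between them is not a complete isometry, and the paper proves this by observing that if $\utro T\colon T\to T\oplus T^t$ were completely contractive then composing with the completely contractive coordinate projection onto $T^t$ would make $x\mapsto x^t$ completely contractive, contradicting Lemma~\ref{lem:TTop}(\ref{lem:TTopa}). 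That step must be supplied.

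The real problem is part (a). The paper deduces it in one line from \cite[Corollary 4.5]{BunceFeelyTimoney}, which applies because Theorem~\ref{thm:TROurcharact} makes $T$ universally reversible \emph{and} the hypothesis excludes ideals of codimension one. You instead attempt to verify the universal property from scratch, and two steps fail. First, the generation claim: the subTRO generated by $\{x\oplus x^t : x\in T\}$ consists of norm limits of sums of elements $\trop{x_1}{\cdots}{x_{2k+1}}\oplus(\trop{x_{2k+1}}{\cdots}{x_1})^t$, and universal reversibility alone does not force these to fill out both coordinates --- $T=\IC$ is universally reversible, yet the diagonal of $\IC\oplus\IC$ is a proper subTRO and $\TROu(\IC)=\IC$. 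The hypothesis doing the work here is the absence of one-dimensional representations, which you invoke only in part (b); your heuristic ``the two products need not coincide'' is not a proof. Second, and more seriously, your verification of the universal property decomposes an arbitrary triple homomorphism $\pi\colon T\to\BH$ into a TRO homomorphism plus a TRO antihomomorphism. In this paper that decomposition is Proposition~\ref{prop:triplehomdecomp}, which is \emph{deduced from} Theorem~\ref{thm:TROuofT}; indeed the decomposition is essentially equivalent to the universal property of $T\oplus T^t$ that you are trying to establish, so as written the argument is circular. Unless you prove that decomposition independently (which is the substantive content of \cite[Corollary 4.5]{BunceFeelyTimoney}), part (a) is not established.
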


\begin{proof}
\begin{enumerate}[(a)]
\item
By the assumptions on $T$ and Theorem~\ref{thm:TROurcharact}, $T$ is
universally reversible with no ideals of codimension one. Moreover $x
\mapsto x^t$ is a TRO antiautomorphism of $\BH$.
Therefore
(\ref{thm:TROuofTa}) follows from \cite[Corollary
4.5]{BunceFeelyTimoney}.

\item
Regarding $T$ and $T^t$ as operator subspaces of $\BH$ and $T \oplus
T^t$ as an operator subspace of $\BH \oplus \BH$, neither the map $T
\to T^t$ ($x \mapsto x^t$) nor its inverse can be completely
contractive, by 
Lemma~\ref{lem:TTop} (\ref{lem:TTopa}), since $T$ is not abelian.
Further, since the natural TRO projection $T \oplus T^t \to T^t$ is
completely contractive, $\utro T \colon T \to T \oplus T^t$
cannot be completely contractive. Thus if
$\mathcal{I}_0$ denotes the zero ideal and $\mathcal{I}_1 = T \oplus
\{0\}$, $\mathcal{I}_2 = \{0\} \oplus T^t$ (which are nonzero operator
space ideals of $\TROu(T)$), then the \JC-operator spaces
$T_{\mathcal{I}_j}$ ($j =0, 1, 2$) are mutually distinct.
\end{enumerate}
\end{proof}

Given an ideal $I$ of a \JCstar-triple $E$ we recall
\cite[Theorem 3.3]{BunceTimoney3}
that
the canonical involution $\Phi$ of $\TROu(E)$ restricts to
that of $\TROu(I)$ and
the canonical involution of $\TROu(E/I)= \TROu(E)/\TROu(I)$ is given by
$x + \TROu(I) \mapsto \Phi(x) + \TROu(I)$ ($x \in \TROu(E)$).
In each of  the following three technical results $\Phi$ denotes the
canonical
involution of $\TROu(E)$ for the \JCstar-triple $E$ in question.

\begin{lemma}
\label{lem:idsTROuE}
Let $E$ be a universally reversible \JCstar-triple and
let $\mathcal{I}$
be an ideal of $\TROu(E)$.
Then
\begin{enumerate}[(a)]
\item
\label{lem:idsTROuEa}
if $\mathcal{I}$ is an operator space ideal with $\Phi(\mathcal{I}) =
\mathcal{I}$ then $\mathcal{I} = \{0\}$;
\item
\label{lem:idsTROuEb}
$\mathcal{I} \cap \Phi( \mathcal{I} ) = \TROu(J)$ where $J$ is the
ideal of $E$ given by $\utro E (J) = \utro E(E) \cap \mathcal{I}$;

\item
\label{lem:idsTROuEc}
$( \mathcal{I} + \Phi(\mathcal{I})) \cap \utro E (E) = \{ x + \Phi(x):
x \in \mathcal{I} \}$.

\end{enumerate}
\end{lemma}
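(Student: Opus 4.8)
The plan is to handle the three parts in order, with (a) the crux and (b), (c) reducing to it or to direct algebra. I identify $E$ with $\utro E(E)$, so universal reversibility reads $\utro E(E) = \{a \in \TROu(E) : \Phi(a) = a\}$, and I use that $\Phi$ is a TRO antiautomorphism, $\Phi(\trop a b c) = \trop{\Phi(c)}{\Phi(b)}{\Phi(a)}$. For (a), I would first show $\Phi$ acts as $-\id$ on $\mathcal{I}$: for $x \in \mathcal{I}$ the element $x + \Phi(x)$ lies in $\mathcal{I}$ (as $\Phi(\mathcal{I}) = \mathcal{I}$) and is $\Phi$-fixed, hence lies in $\utro E(E) \cap \mathcal{I} = \{0\}$, so $\Phi(x) = -x$. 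Next, for $w \in \utro E(E)$ the element $\trop x w x \in \mathcal{I}$ (ideal property) is $\Phi$-fixed, since $\Phi(\trop x w x) = \trop{\Phi(x)}{w}{\Phi(x)} = \trop x w x$; thus $\trop x w x \in \utro E(E) \cap \mathcal{I} = \{0\}$. Polarising in $x$ yields $\trop x w y + \trop y w x = 0$ for all $x, y \in \mathcal{I}$ and $w \in \utro E(E)$.

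The main step is then to upgrade $\trop x w x = 0$ from $w \in \utro E(E)$ to $\trop x a x = 0$ for every $a \in \TROu(E)$. Since $\{a : \trop x a x = 0\}$ is a closed subspace and $\TROu(E)$ is the closed linear span of products $\trop{w_1}{\cdots}{w_{2k+1}}$ with $w_i \in \utro E(E)$, it suffices to treat such a product $a$. Grouping $a^*$ from the right and using the ideal property repeatedly, $y := w_{2k} w_{2k-1}^* \cdots w_2 w_1^* x \in \mathcal{I}$, so that $\trop x a x = x w_{2k+1}^* y$; the polarised identity followed by $\trop x {w_{2k+1}} x = 0$ gives $x w_{2k+1}^* y = -y w_{2k+1}^* x = -w_{2k} \cdots w_1^*\,\trop x {w_{2k+1}} x = 0$. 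Taking $a = x$ then forces $\trop x x x = x = 0$, whence $\mathcal{I} = \{0\}$. I expect this reduction to the generating products to be the one point requiring genuine care.

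For (b), set $\mathcal{K} = \mathcal{I} \cap \Phi(\mathcal{I})$, a $\Phi$-invariant ideal, and let $J$ be the ideal of $E$ with $\utro E(J) = \utro E(E) \cap \mathcal{I}$ (an ideal of $\utro E(E)$, being the intersection of a subtriple with an ideal). Since $\utro E(J)$ is $\Phi$-fixed and lies in $\mathcal{I}$, it lies in $\mathcal{K}$, so $\TROu(J) = \TRO(\utro E(J)) \subseteq \mathcal{K}$. For the reverse inclusion I would pass to $\TROu(E/J) = \TROu(E)/\TROu(J)$, whose canonical involution $\bar\Phi$ is induced by $\Phi$ and for which $E/J$ is again universally reversible (a triple homomorphism of $E/J$ pulls back along the quotient map to one of $E$ with the same range). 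One checks that $\mathcal{K}/\TROu(J)$ is a $\bar\Phi$-invariant operator space ideal of $\TROu(E/J)$: its intersection with $\utro{E/J}(E/J)$ is trivial because a common element lifts to some $w \in \utro E(E) \cap \mathcal{K} = \utro E(J) \subseteq \TROu(J)$. Part (a) then gives $\mathcal{K}/\TROu(J) = \{0\}$, i.e. $\mathcal{K} = \TROu(J)$.

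Part (c) is a short manipulation. The inclusion $\supseteq$ holds since $x + \Phi(x)$ is $\Phi$-fixed, hence in $\utro E(E)$, and visibly lies in $\mathcal{I} + \Phi(\mathcal{I})$. For $\subseteq$, write $z \in (\mathcal{I} + \Phi(\mathcal{I})) \cap \utro E(E)$ as $z = a + b$ with $a \in \mathcal{I}$ and $b \in \Phi(\mathcal{I})$; combining this with $z = \Phi(z) = \Phi(a) + \Phi(b)$ and averaging gives $z = x + \Phi(x)$, where $x = \tfrac12(a + \Phi(b)) \in \mathcal{I}$ and $\Phi(x) = \tfrac12(\Phi(a) + b)$.
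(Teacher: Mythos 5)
Your proposal is correct. Parts (\ref{lem:idsTROuEb}) and (\ref{lem:idsTROuEc}) follow the paper's proof essentially verbatim: for (b) the paper likewise passes to $\TROu(E/J)=\TROu(E)/\TROu(J)$, observes that $\bigl(\mathcal{I}\cap\Phi(\mathcal{I})\bigr)/\TROu(J)$ is a $\tilde\Phi$-invariant operator space ideal, and invokes (a) for the universally reversible quotient $E/J$; for (c) the paper writes an element of the left-hand side as $x+\Phi(y)$ with $x,y\in\mathcal{I}$ and exhibits it as $a+\Phi(a)$ with $a=\tfrac12(x+y)$, which is your averaging computation in different notation (both uses of $\Phi$-fixedness of the element are needed and present in each). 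The one genuine divergence is part (\ref{lem:idsTROuEa}): the paper simply cites \cite[Lemma 4.3]{BunceFeelyTimoney}, whereas you prove it from scratch. Your argument is sound: $\Phi=-\id$ on $\mathcal{I}$ because $x+\Phi(x)$ is a $\Phi$-fixed element of $\mathcal{I}$; hence $\trop xwx$ is $\Phi$-fixed and lies in $\utro E(E)\cap\mathcal{I}=\{0\}$ for $w\in\utro E(E)$; and the upgrade to $\trop xax=0$ for all $a\in\TROu(E)$ works because the zero set of $a\mapsto xa^*x$ is a closed (conjugate-linear, hence complex) subspace containing the odd words $\trop{w_1}{\cdots}{w_{2k+1}}$ that densely span $\TROu(E)$ --- your regrouping $xa^*x=\trop{x}{w_{2k+1}}{y}$ with $y=w_{2k}\cdots w_1^*x\in\mathcal{I}$, followed by polarisation and associativity, is valid. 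Taking $a=x$ and using $\|xx^*x\|=\|x\|^3$ then kills $\mathcal{I}$. What your route buys is self-containedness (no appeal to the earlier paper) at the cost of about a paragraph; it also makes transparent exactly which structural facts ($\Phi$ an involutive antiautomorphism fixing $\utro E(E)$ pointwise, the TRO ideal property, and density of odd words) drive the result.
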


\begin{proof}
\begin{enumerate}[(a)]
\item
See
\cite[Lemma 4.3]{BunceFeelyTimoney}.

\item
Letting $\tilde{\Phi}$ be the canonical involution of 
$\TROu(E/J) = \TROu(E)/\TROu(J)$ and $\mathcal{K} = \mathcal{I} \cap \Phi(
\mathcal{I} )$,
$\mathcal{K}/ \TROu(J)$
is a $\tilde{\Phi}$-invariant operator space ideal
of $\TROu(E/J)$. Since $E/J$ is universally reversible, $\mathcal{K} = \TROu(J)$
by 
(\ref{lem:idsTROuEa}).

\item
Given $x, y \in \mathcal{I}$ with $x + \Phi(y) \in \utro E (E)$, we
have $x + \Phi(y) = a + \Phi(a)$ where $a =(1/2)(x+y)$, so that the
left hand side is contained in the right.
On the other hand, since $E$ is universally reversible, the right hand
side is contained in $\utro E(E)$, establishing the assertion.

\end{enumerate}
\end{proof}

\begin{proposition}
\label{prop:idsTROuE}
Let $E$ be a universally reversible \JCstar-triple.
Then the following are equivalent for an ideal
$\mathcal{I}$ of $\TROu(E)$
\begin{enumerate}[(a)]
\item
\label{prop:idsTROuEa}
$\mathcal{I}$ is an operator space ideal of $\TROu(E)$;
\item
\label{prop:idsTROuEb}
$\mathcal{I} \cap \Phi( \mathcal{I} ) = \{0\}$;
\item
\label{prop:idsTROuEc}
the map
$\mathcal{I} \to ( \mathcal{I} + \Phi(\mathcal{I}) ) \cap \utro E (E)$
($x \mapsto x + \Phi(x)$) is a triple isomorphism (onto an ideal of
$\utro E(E)$).
\end{enumerate}
\end{proposition}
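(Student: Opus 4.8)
The plan is to prove the cycle by first settling (\ref{prop:idsTROuEa}) $\Leftrightarrow$ (\ref{prop:idsTROuEb}) from the preceding lemma, then establishing (\ref{prop:idsTROuEb}) $\Rightarrow$ (\ref{prop:idsTROuEc}) by a direct triple-product computation, and finally (\ref{prop:idsTROuEc}) $\Rightarrow$ (\ref{prop:idsTROuEa}) by contraposition. Throughout I would lean on two facts: that $\utro E(E)$ is precisely the fixed-point set $\{x : \Phi(x) = x\}$ (universal reversibility), and that $\Phi$ is an antiautomorphism, so $\Phi(\trop abc) = \trop{\Phi(c)}{\Phi(b)}{\Phi(a)}$ and hence $\Phi(\tp xyz) = \tp{\Phi(x)}{\Phi(y)}{\Phi(z)}$ for the symmetrised product.

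For (\ref{prop:idsTROuEa}) $\Leftrightarrow$ (\ref{prop:idsTROuEb}) I would simply quote Lemma~\ref{lem:idsTROuE}(\ref{lem:idsTROuEb}): since $\mathcal{I} \cap \Phi(\mathcal{I}) = \TROu(J)$ with $\utro E(J) = \utro E(E) \cap \mathcal{I}$, and $\utro E$ is injective with $\TROu(J) = \TRO(\utro E(J))$, each of the two vanishing conditions $\utro E(E) \cap \mathcal{I} = \{0\}$ and $\mathcal{I} \cap \Phi(\mathcal{I}) = \{0\}$ is equivalent to $J = \{0\}$.

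For (\ref{prop:idsTROuEb}) $\Rightarrow$ (\ref{prop:idsTROuEc}), assume $\mathcal{I} \cap \Phi(\mathcal{I}) = \{0\}$, so that $\mathcal{I}$ and $\Phi(\mathcal{I})$ are orthogonal ideals and $\mathcal{I} + \Phi(\mathcal{I}) = \mathcal{I} \oplus_\infty \Phi(\mathcal{I})$ is a $\Phi$-invariant ideal. Writing $\sigma(x) = x + \Phi(x)$, injectivity is immediate ($\sigma(x) = 0$ forces $x = -\Phi(x) \in \mathcal{I}\cap\Phi(\mathcal{I}) = \{0\}$), and by Lemma~\ref{lem:idsTROuE}(\ref{lem:idsTROuEc}) the image of $\sigma$ equals $(\mathcal{I} + \Phi(\mathcal{I})) \cap \utro E(E)$, an ideal of $\utro E(E)$ since it is the intersection of an ideal of $\TROu(E)$ with the subtriple $\utro E(E)$. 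The heart of the matter is that $\sigma$ preserves the Jordan triple product: I would expand $\tp{\sigma(x)}{\sigma(y)}{\sigma(z)}$ into the eight TRO monomials $\trop{a(x)}{b(y)}{c(z)}$ with $a,b,c \in \{\id, \Phi\}$ and note that orthogonality annihilates every mixed monomial. Indeed, a monomial whose outer slots lie in different ideals lands in $\mathcal{I}\cap\Phi(\mathcal{I}) = \{0\}$, while one whose first and middle slots lie in different ideals vanishes because $x\,\Phi(y)^* = \Phi(x)\,y^* = 0$. Only the two pure monomials survive, yielding $\tp{\sigma(x)}{\sigma(y)}{\sigma(z)} = \tp xyz + \tp{\Phi(x)}{\Phi(y)}{\Phi(z)}$, which equals $\sigma(\tp xyz)$ by the antiautomorphism law above. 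Hence $\sigma$ is a triple isomorphism onto an ideal of $\utro E(E)$.

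For (\ref{prop:idsTROuEc}) $\Rightarrow$ (\ref{prop:idsTROuEa}) I argue contrapositively. If (\ref{prop:idsTROuEa}) fails, pick $0 \neq w \in \utro E(E) \cap \mathcal{I}$; then $\Phi(w) = w$, so $\sigma(w) = 2w$ and $\sigma(\tp www) = w w^* w + \Phi(w w^* w) = 2\, w w^* w$, whereas $\tp{\sigma(w)}{\sigma(w)}{\sigma(w)} = \tp{2w}{2w}{2w} = 8\, w w^* w$. Since $w w^* w \neq 0$ these differ, so $\sigma$ fails to be a triple homomorphism and (\ref{prop:idsTROuEc}) cannot hold. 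I expect the main obstacle to be the multiplicativity check in (\ref{prop:idsTROuEb}) $\Rightarrow$ (\ref{prop:idsTROuEc}): one must keep straight that ``triple isomorphism'' refers to the symmetrised \JCstar-triple products on both $\mathcal{I}$ and $\utro E(E)$ rather than the TRO product, and the identity closes only because orthogonality removes exactly the cross terms that would otherwise break it.
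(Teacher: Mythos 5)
Your proof is correct, but it travels a noticeably different road from the paper's. The paper gets (\ref{prop:idsTROuEa}) $\Rightarrow$ (\ref{prop:idsTROuEb}) by applying Lemma~\ref{lem:idsTROuE}~(\ref{lem:idsTROuEa}) to the $\Phi$-invariant operator space ideal $\mathcal{I}\cap\Phi(\mathcal{I})$, and then closes the cycle metrically: for (\ref{prop:idsTROuEb}) $\Rightarrow$ (\ref{prop:idsTROuEc}) it observes that orthogonality makes $x\mapsto x+\Phi(x)$ a linear isometry, invokes surjectivity from Lemma~\ref{lem:idsTROuE}~(\ref{lem:idsTROuEc}), and then uses the \JBstar-triple principle that surjective linear isometries are triple isomorphisms; for (\ref{prop:idsTROuEc}) $\Rightarrow$ (\ref{prop:idsTROuEa}) it uses the resulting isometry to force $\|x\|=\|2x\|$ for $x\in\mathcal{I}\cap\utro E(E)$. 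You instead (i) extract the full equivalence (\ref{prop:idsTROuEa}) $\Leftrightarrow$ (\ref{prop:idsTROuEb}) at once from Lemma~\ref{lem:idsTROuE}~(\ref{lem:idsTROuEb}), since both vanishing conditions amount to $J=\{0\}$; (ii) verify multiplicativity of $\sigma(x)=x+\Phi(x)$ by hand, expanding the eight TRO monomials and using $u^*v=uv^*=0$ for $u\in\mathcal{I}$, $v\in\Phi(\mathcal{I})$ to kill the six mixed terms (your two cases -- outer slots in different ideals, hence in $\mathcal{I}\cap\Phi(\mathcal{I})=\{0\}$, and adjacent slots in different ideals -- do cover all of them), together with $\Phi(\tp xyz)=\tp{\Phi(x)}{\Phi(y)}{\Phi(z)}$; and (iii) refute (\ref{prop:idsTROuEc}) when (\ref{prop:idsTROuEa}) fails by the explicit mismatch $8\,ww^*w\neq 2\,ww^*w$. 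The paper's argument is shorter because it leans on the uniqueness of the triple product under isometry; yours is more self-contained algebraically and never needs that metric fact, at the cost of the monomial bookkeeping. Both are sound.
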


\begin{proof}
(\ref{prop:idsTROuEa})
$\Rightarrow$
(\ref{prop:idsTROuEb}) is immediate from 
Lemma~\ref{lem:idsTROuE} (\ref{lem:idsTROuEa})
since $\mathcal{I} \cap \Phi( \mathcal{I} )$ is a $\Phi$-invariant
ideal of $\TROu(E)$.

(\ref{prop:idsTROuEb})
$\Rightarrow$
(\ref{prop:idsTROuEc}).
Given (\ref{prop:idsTROuEb})
we have $\mathcal{I} + \Phi(\mathcal{I}) =
\mathcal{I} \oplus_\infty \Phi(\mathcal{I}) $
so that the stated map is a linear isometry, surjective by
Lemma~\ref{lem:idsTROuE} (\ref{lem:idsTROuEc}), and thus a triple
isomorphism.

(\ref{prop:idsTROuEc})
$\Rightarrow$
(\ref{prop:idsTROuEa}).
If (\ref{prop:idsTROuEc}) holds and $x \in \mathcal{I} \cap \utro E
(E)$, then $\Phi(x) = x$ and $\|x\| = \|2x\|$, giving $x = 0$.
\end{proof}

\begin{lemma}
\label{lem:20120657}
Let $\mathcal{I}$, $\mathcal{J}$ be operator space ideals of
$\TROu(E)$ with $\mathcal{J} \subseteq \mathcal{I}$,
where
$E$ is a universally reversible \JCstar-triple.
Put $\mathcal{K} = \mathcal{J} + \Phi(\mathcal{J})$ and let $L$ be the
ideal of $E$ such that $\utro E (L) = \utro E(E) \cap \mathcal{K}$.
Then
\begin{enumerate}[(a)]
\item
\label{lem:20120657a}
$\mathcal{I} \cap (\utro E(E) + \mathcal{K}) = \mathcal{J} 
= \mathcal{I} \cap \mathcal{K}$;

\item 
\label{lem:20120657b}
$( \mathcal{I} + \mathcal{K} ) \cap (\utro E(E) + \mathcal{K})
= \mathcal{K}$;

\item
\label{lem:20120657c}
$\mathcal{I}/ \mathcal{J}$ is TRO isomorphic 
to an operator space ideal of $\TROu(E/L) = \TROu(E)/\mathcal{K}$.

\end{enumerate}
\end{lemma}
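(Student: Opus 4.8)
The plan is to prove the three identities in order, reducing (b) and the quotient step of (c) to (a), and to isolate the claim $\mathcal{K} = \TROu(L)$ (implicit in the assertion $\TROu(E/L) = \TROu(E)/\mathcal{K}$) as the one substantial new point. Throughout I would use that, as $\mathcal{J}$ is an operator space ideal, Proposition~\ref{prop:idsTROuE} gives $\mathcal{J} \cap \Phi(\mathcal{J}) = \{0\}$, so $\mathcal{K} = \mathcal{J} \oplus_\infty \Phi(\mathcal{J})$ is a $\Phi$-invariant ideal (recall $\Phi^2 = \id$); that $\Phi$ fixes $\utro E(E)$ pointwise while, by universal reversibility, $\utro E(E)$ is exactly the $\Phi$-fixed set of $\TROu(E)$; and that $\mathcal{I} \cap \utro E(E) = \{0\}$ since $\mathcal{I}$ is an operator space ideal.

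For (a), the containments $\mathcal{J} \subseteq \mathcal{I} \cap \mathcal{K} \subseteq \mathcal{I} \cap (\utro E(E) + \mathcal{K})$ are clear, so everything reduces to the single inclusion $\mathcal{I} \cap (\utro E(E) + \mathcal{K}) \subseteq \mathcal{J}$. Here I would use an antisymmetrisation trick, which is the heart of the elementary part. Writing $z = s + k \in \mathcal{I}$ with $s \in \utro E(E)$ and $k = j + \Phi(j') \in \mathcal{K}$ (where $j, j' \in \mathcal{J}$), one computes $z - \Phi(z) = k - \Phi(k) = m - \Phi(m)$, where $m = j - j' \in \mathcal{J}$; hence $z - m$ is $\Phi$-fixed and so lies in $\utro E(E)$, while also $z - m \in \mathcal{I}$, forcing $z - m = 0$ and $z = m \in \mathcal{J}$.

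Part (b) would then follow from (a): the inclusion $\supseteq$ is trivial, and if $z = a + k_1 = s + k_2$ with $a \in \mathcal{I}$, $s \in \utro E(E)$, $k_1, k_2 \in \mathcal{K}$, then $a = s + (k_2 - k_1) \in \mathcal{I} \cap (\utro E(E) + \mathcal{K}) = \mathcal{J}$ by (a), so $z = a + k_1 \in \mathcal{K}$.

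The main obstacle is (c), specifically the identity $\mathcal{K} = \TROu(L)$. Since $\utro E(L) = \utro E(E) \cap \mathcal{K}$ lies in the closed TRO ideal $\mathcal{K}$, I get $\TROu(L) = \TRO(\utro E(L)) \subseteq \mathcal{K}$, and $\TROu(L)$ is a $\Phi$-invariant ideal of $\mathcal{K}$, so $\mathcal{K}/\TROu(L)$ is a $\tilde\Phi$-invariant ideal of $\TROu(E)/\TROu(L) = \TROu(E/L)$. I would check it is an operator space ideal: if $z \in \mathcal{K}$ with $z + \TROu(L) = \utro{E/L}(x + L) = \utro E(x) + \TROu(L)$, then $\utro E(x) \in \mathcal{K} \cap \utro E(E) = \utro E(L) \subseteq \TROu(L)$, so the class vanishes. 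Noting that $E/L$ is again universally reversible (its representations pull back to $E$, so Theorem~\ref{thm:charaterise} applies), I would invoke Lemma~\ref{lem:idsTROuE}(a) to conclude $\mathcal{K}/\TROu(L) = \{0\}$, giving $\TROu(E/L) = \TROu(E)/\mathcal{K}$. Finally, letting $\pi$ be the quotient map, its restriction to $\mathcal{I}$ has kernel $\mathcal{I} \cap \mathcal{K} = \mathcal{J}$ by (a), yielding a TRO isomorphism of $\mathcal{I}/\mathcal{J}$ onto the ideal $(\mathcal{I} + \mathcal{K})/\mathcal{K}$; that this image is an operator space ideal I would deduce from (b), since $\ker\pi = \mathcal{K}$ lies inside both $\mathcal{I} + \mathcal{K}$ and $\utro E(E) + \mathcal{K}$, so their images meet in $\pi\big((\mathcal{I} + \mathcal{K}) \cap (\utro E(E) + \mathcal{K})\big) = \pi(\mathcal{K}) = \{0\}$, with $\pi(\utro E(E) + \mathcal{K}) = \utro{E/L}(E/L)$.
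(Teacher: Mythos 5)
Your proposal is correct and takes essentially the same route as the paper: the antisymmetrisation $z\mapsto z-m$ in (a) is precisely the paper's ``$x-c$'' argument, (b) is argued identically, and (c) has the same structure. The only difference is that where the paper obtains $\mathcal{K}=\TROu(L)$ by citing Lemma~\ref{lem:idsTROuE}~(\ref{lem:idsTROuEb}), you reprove that fact inline (by the same mechanism used in the paper's proof of that lemma, namely that $\mathcal{K}/\TROu(L)$ is a $\tilde\Phi$-invariant operator space ideal of $\TROu(E/L)$, hence zero).
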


\begin{proof}
\begin{enumerate}[(a)]
\item
That $\mathcal{J}$ is contained in the two stated sets is clear.
Conversely given $x \in \mathcal{I} \cap ( \utro E(E) + \mathcal{K})$
we have $x = y + a + \Phi(b)$ for some $y$ ($=\Phi(y)$) in $\utro
E(E)$ and $a, b \in \mathcal{J}$. Putting $c = a -b $ we have
$\Phi(x -c) = x - c$ and so, since $E$ is universally reversible,
$x - c \in \utro E(E) \cap \mathcal{I} = \{ 0\}$ giving $x = c \in
\mathcal{I}$.

\item
Given $z$ in the left hand set we have $x \in \mathcal{I}$,
$y \in \utro e(E)$ and $a,b \in \mathcal{K}$ such that $z=
x +a = y + b$. Then $x = y + b - a \in \mathcal{I} \cap (\utro e(E) +
\mathcal{K}) =
\mathcal{J}$, by (\ref{lem:20120657a}),
so that $z \in \mathcal{K}$.

\item
The quotients 
$\mathcal{I}/\mathcal{J}$
and
$(\mathcal{I} + \mathcal{K})/\mathcal{K}$
are TRO isomorphic by 
(\ref{lem:20120657a}).
By Lemma~\ref{lem:idsTROuE}
(\ref{lem:idsTROuEb}),
$\TROu(L) = \mathcal{K}$, so that
$\TROu(E/L) = \TROu(E)/\mathcal{K}$.
Since $(\mathcal{I} + \mathcal{K})/\mathcal{K}$ has vanishing
intersection with
$(\utro E(E) + \mathcal{K})/\mathcal{K}$,
by (\ref{lem:20120657b}),
it is an operator space ideal of $\TROu(E/L)$.
\end{enumerate}
\end{proof}

\begin{lemma}
\label{lem:20120658}
Let $E$ be a \JCstar-triple and let $E_1$ denote the intersection of
the kernels of all the one dimensional representations of $E$ (with
$E_1 = E$ if $E$ has no representations onto $\IC$).
Then
\begin{enumerate}[(a)]
\item
\label{lem:20120658a}
$E_1$ has no (nonzero) one dimensional representations and $E/E_1$ is
abelian;

\item
\label{lem:20120658b}
$\TROu(E_1)$ has no  one dimensional representations
and $\TROu(E/E_1) = \TROu(E)/\TROu(E_1)$ is abelian;

\item
\label{lem:20120658c}
an ideal $\mathcal{I}$ of $\TROu(E)$ is abelian if and only if
$\mathcal{I} \cap \TROu(E_1) = \{0\}$.

\end{enumerate}
\end{lemma}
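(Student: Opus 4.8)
The plan is to establish the three parts in order, using each as input to the next, with Proposition~\ref{prop:abeliancase} as the pivot linking the three notions ``abelian'', ``separating family of $\IC$-representations'' and ``$\TROu(\cdot)$ coincides with the triple itself'', together with the repeatedly-used fact that a Cartan factor representation of an ideal extends to the ambient triple. For (a), I would first note that $E/E_1$ carries a separating family of one dimensional representations: if $x \notin E_1$, then by definition of $E_1$ as $\bigcap_\rho \ker\rho$ (the intersection over nonzero triple homomorphisms $\rho \colon E \to \IC$) some $\rho$ has $\rho(x) \neq 0$, and every such $\rho$ annihilates $E_1$ and hence factors through $E/E_1$. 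Thus $E/E_1$ is abelian by Proposition~\ref{prop:abeliancase} ((\ref{prop:abeliancased}) $\Rightarrow$ (\ref{prop:abeliancasea})), realising it concretely so the proposition applies. For the first clause, a nonzero one dimensional representation of the ideal $E_1$ would be a Cartan factor representation onto the finite rank factor $\IC$, hence would extend to a nonzero $\rho \colon E \to \IC$; but $E_1 \subseteq \ker\rho$ would then force the original representation to vanish on $E_1$, a contradiction.

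For (b), the identity $\TROu(E/E_1) = \TROu(E)/\TROu(E_1)$ is the recalled formula for quotients of universal TROs. Since $E/E_1$ is abelian by (a), Proposition~\ref{prop:abeliancase} ((\ref{prop:abeliancasea}) $\Rightarrow$ (\ref{prop:abeliancasee}),(\ref{prop:abeliancaseb})) gives $\TROu(E/E_1) = E/E_1$, an abelian TRO. That $\TROu(E_1)$ has no one dimensional representation then follows from (a) through the universal property: a nonzero triple homomorphism $\sigma \colon \TROu(E_1) \to \IC$ cannot vanish on the generating set $\utro{E_1}(E_1)$, so $\sigma \circ \utro{E_1}$ is a nonzero one dimensional representation of $E_1$, contradicting (a).

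For (c), I would treat the two implications separately. For the ``if'' direction, suppose $\mathcal{I} \cap \TROu(E_1) = \{0\}$; then the quotient map $q \colon \TROu(E) \to \TROu(E)/\TROu(E_1) = \TROu(E/E_1)$ restricts to an \emph{injective} TRO homomorphism on $\mathcal{I}$, the codomain being abelian by (b). Since every element of an abelian TRO satisfies $\trop xyz = \trop zyx$, applying $q$ to $\trop xyz$ and $\trop zyx$ for $x,y,z \in \mathcal{I}$ and using injectivity shows $\mathcal{I}$ is abelian. For the ``only if'' direction I would argue that $\TROu(E_1)$ has no nonzero abelian ideal: such an ideal, being a nonzero abelian \JCstar-triple, would carry a nonzero $\IC$-representation (Proposition~\ref{prop:abeliancase} (\ref{prop:abeliancaseb}) $\Rightarrow$ (\ref{prop:abeliancased})) which, as a Cartan factor representation of an ideal, extends to a one dimensional representation of $\TROu(E_1)$, contradicting (b). Now $\mathcal{I} \cap \TROu(E_1)$ is an ideal of $\TROu(E_1)$ and, when $\mathcal{I}$ is abelian, a subTRO of an abelian TRO and hence abelian; therefore it must be zero.

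I expect the main obstacle to lie in the careful bookkeeping of (c): one must verify that $q|_{\mathcal{I}}$ is genuinely an injective TRO homomorphism, so that abelianness --- an identity in the ternary product --- transfers in the required direction, and one must be sure the $\IC$-representation produced in the contrapositive really does extend from the ideal $\mathcal{I} \cap \TROu(E_1)$ up to $\TROu(E_1)$ via the extension property. The remaining work is a matter of correctly threading the universal property of $\TROu$ and the equivalences of Proposition~\ref{prop:abeliancase}, together with the quotient identity for universal TROs.
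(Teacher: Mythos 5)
Your proposal is correct and follows essentially the same route as the paper: parts (a) and (b) are exactly the "immediate from Proposition~\ref{prop:abeliancase} and the properties of the universal TRO" steps spelled out (separating family of $\IC$-representations on $E/E_1$, extension of one-dimensional representations from the ideal $E_1$, and the quotient formula for $\TROu$), and for (c) the paper likewise realises $\mathcal{I}$ as a subtriple of the abelian quotient $\TROu(E)/\TROu(E_1)$ for one direction and uses that $\TROu(E_1)$ has no nonzero abelian ideal for the other. Your version merely fills in the details the authors leave implicit.
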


\begin{proof}
(\ref{lem:20120658a})
and
(\ref{lem:20120658b})
are immediate by Proposition~\ref{prop:abeliancase}
and the properties of the universal TRO.
To see
(\ref{lem:20120658c}),
if $\mathcal{I}$ has trivial intersection with $\TROu(E_1)$ then it
may be realised as  a subtriple of $\TROu(E)/\TROu(E_1)$ and so is
abelian by the second part of 
(\ref{lem:20120658b}). On the other hand the first part of
(\ref{lem:20120658b}) implies that $\TROu(E_1)$ has no nonzero abelian
ideal.
\end{proof}

\begin{proposition}
\label{prop:20120659}
Let $E$ be a universally reversible \JCstar-triple and let $E_1$ be as
in Lemma~\ref{lem:20120658}.
Then
\begin{enumerate}[(a)]
\item
\label{prop:20120659a}
$\TROu(E)$ has no nonzero abelian operator space ideals;

\item
\label{prop:20120659b}
all operator space ideals of $\TROu(E)$ are contained in $\TROu(E_1)$.

\end{enumerate}
\end{proposition}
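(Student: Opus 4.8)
The plan is to obtain (\ref{prop:20120659a}) as an immediate consequence of (\ref{prop:20120659b}): if $\mathcal{I}$ is an abelian operator space ideal then $\mathcal{I} \subseteq \TROu(E_1)$ by (\ref{prop:20120659b}), while Lemma~\ref{lem:20120658}(\ref{lem:20120658c}) gives $\mathcal{I} \cap \TROu(E_1) = \{0\}$, forcing $\mathcal{I} = \{0\}$. So the substance is (\ref{prop:20120659b}), which I would prove by passing to the quotient $q \colon \TROu(E) \to \TROu(E)/\TROu(E_1) = \TROu(E/E_1)$ and showing $q(\mathcal{I}) = \{0\}$ for every operator space ideal $\mathcal{I}$. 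The guiding observation is that $E/E_1$ is abelian (Lemma~\ref{lem:20120658}(\ref{lem:20120658a})), so by Proposition~\ref{prop:abeliancase} the canonical involution $\tilde\Phi$ of $\TROu(E/E_1) = \utro{E/E_1}(E/E_1)$ is the identity; hence $q \circ \Phi = \tilde\Phi \circ q = q$.

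For the heart of (\ref{prop:20120659b}) I would argue by contradiction. Suppose $q(\mathcal{I}) \neq \{0\}$ and pick $x \in \mathcal{I}$ with $q(x) \neq 0$. Universal reversibility gives $x + \Phi(x) \in \utro E(E)$, say $x + \Phi(x) = \utro E(a)$, and from $q \circ \Phi = q$ we get $q(x + \Phi(x)) = 2q(x) \neq 0$, so $\utro E(a) \notin \TROu(E_1)$ and therefore $a \notin E_1$ (using $\TROu(E_1) \cap \utro E(E) = \utro E(E_1)$). By the definition of $E_1$ there is a one-dimensional representation $\rho \colon E \to \IC$ with $\rho(a) \neq 0$; let $\hat\rho \colon \TROu(E) \to \IC$ be its canonical TRO extension. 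Since $\IC$ is abelian, $\hat\rho \circ \Phi$ is again a TRO homomorphism extending $\rho$, so by the uniqueness in the universal property $\hat\rho \circ \Phi = \hat\rho$; consequently $2\hat\rho(x) = \hat\rho(\utro E(a)) = \rho(a) \neq 0$, whence $\hat\rho$ is nonzero both on $\mathcal{I}$ (at $x$) and on $\Phi(\mathcal{I})$ (at $\Phi(x)$).

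This is where the main obstacle is resolved: by Proposition~\ref{prop:idsTROuE} the ideals $\mathcal{I}$ and $\Phi(\mathcal{I})$ have zero intersection, hence are orthogonal, so $\hat\rho(\mathcal{I})$ and $\hat\rho(\Phi(\mathcal{I}))$ are orthogonal ideals of $\IC$; since each is nonzero they must both equal $\IC$, which is impossible because $\IC$ admits no two nonzero orthogonal ideals. This contradiction yields $q(\mathcal{I}) = \{0\}$, that is $\mathcal{I} \subseteq \TROu(E_1)$, proving (\ref{prop:20120659b}) and hence (\ref{prop:20120659a}). The delicate points to get right are the identity $\hat\rho \circ \Phi = \hat\rho$ (which rests on antihomomorphisms into the abelian TRO $\IC$ being homomorphisms, together with uniqueness of the extension) and the translation of $q(x) \neq 0$ into a one-dimensional representation not annihilating $a$; once these are in place the orthogonality clash inside $\IC$ does the rest.
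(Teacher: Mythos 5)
Your argument is correct, but it runs in the opposite logical direction to the paper's and uses different machinery. The paper proves (\ref{prop:20120659a}) first and directly: for an abelian operator space ideal $\mathcal{I}$, the ideal $J$ of $E$ with $\utro E (J) = \utro E(E) \cap (\mathcal{I} + \Phi(\mathcal{I}))$ is abelian, whence $\utro E(J) = \TROu(J) = \mathcal{I} + \Phi(\mathcal{I})$ by Proposition~\ref{prop:abeliancase}~(\ref{prop:abeliancasee}) and Lemma~\ref{lem:idsTROuE}~(\ref{lem:idsTROuEb}), forcing $\mathcal{I} \subseteq \utro E(E)$ and so $\mathcal{I} = \{0\}$; it then deduces (\ref{prop:20120659b}) from (\ref{prop:20120659a}) by using Lemma~\ref{lem:20120657}~(\ref{lem:20120657c}) to realise $\mathcal{I}/(\mathcal{I} \cap \TROu(E_1))$ as an abelian operator space ideal of $\TROu(F)$ for a universally reversible quotient $F$ of $E$. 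You instead prove (\ref{prop:20120659b}) directly --- producing a one-dimensional representation $\rho$ with $\rho(a) \neq 0$ where $\utro E(a) = x + \Phi(x)$, establishing $\hat\rho \circ \Phi = \hat\rho$ from uniqueness in the universal property (via the fact that a TRO antihomomorphism into $\IC$ is a homomorphism), and clashing this with the orthogonality of $\mathcal{I}$ and $\Phi(\mathcal{I})$ supplied by Proposition~\ref{prop:idsTROuE} --- and then read off (\ref{prop:20120659a}) from (\ref{prop:20120659b}) together with Lemma~\ref{lem:20120658}~(\ref{lem:20120658c}). Your route bypasses Lemma~\ref{lem:20120657} and the computation $\TROu(J) = \mathcal{I} + \Phi(\mathcal{I})$ altogether, at the price of the two verifications you flag; both check out, since $q \circ \Phi = q$ follows from the quotient description of the canonical involution and the abelianness of $E/E_1$, the step $a \notin E_1$ follows from $\utro E(E) \cap \TROu(E_1) = \utro E(E_1)$, and two nonzero orthogonal elements cannot coexist in $\IC$. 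The paper's proof of (\ref{prop:20120659a}) has the merit of not involving $E_1$ at all; yours has the merit of being more concrete, resting on scalar representations rather than on the quotient lemma.
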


\begin{proof}
\begin{enumerate}[(a)]
\item
Let $\mathcal{I}$ be an abelian operator space ideal of $\TROu(E)$ and
let $J$ be the ideal of $E$ with $\utro E(J) = \utro E(E) \cap
(\mathcal{I} + \Phi(\mathcal{I}))$.
Since $\mathcal{I}$ and $\Phi(\mathcal{I})$ are orthogonal,
by Proposition~\ref{prop:idsTROuE}
(\ref{prop:idsTROuEb}),
$\mathcal{I} + \Phi(\mathcal{I})$ is abelian, as therefore is $J$.
Hence, using
Proposition~\ref{prop:abeliancase}
(\ref{prop:abeliancasee}) in the first equality and 
Lemma~\ref{lem:idsTROuE}
(\ref{lem:idsTROuEb}) in the second,
we have
$\utro E(J) = \TROu(J) = \mathcal{I} + \Phi(\mathcal{I})$, so that
$\mathcal{I}$ is contained in  $\utro E(E)$. Hence $\mathcal{I} =
\{0\}$.

\item
Let $\mathcal{I}$
be a nonzero operator space ideal of $\TROu(E)$. Put
$\mathcal{J}  =   \mathcal{I} \cap ∩ \TROu(E_1)$,
$\mathcal{K}  =  \mathcal{J} + φ\Phi(\mathcal{J})$. 
[Note that $\mathcal{J}$ is nonzero by
(\ref{prop:20120659a})
together
with Lemma~\ref{lem:20120658}  (\ref{lem:20120658c}).]
By construction $\mathcal{I}/\mathcal{J}$
is TRO isomorphic to
$(\TROu( E_1) + \mathcal{J})/\TROu(E_1)$
and so is abelian by
Lemma~\ref{lem:20120658}  (\ref{lem:20120658b}).
In addition, by
Lemma~\ref{lem:20120657} (\ref{lem:20120657c}),
there is a \JCstar-triple quotient $F$ of $E$ such that
$\mathcal{I}/\mathcal{J}$
is TRO isomorphic to an operator space ideal of $\TROu(F)$
and therefore
vanishes, by (a), since $F$ is universally reversible.
Hence,   $\mathcal{I}  =  \mathcal{J}$.
\end{enumerate}
\end{proof}

Given a \JCstar-triple $E$ and an operator space ideal $\mathcal{I}$
of $\TROu(E)$, in the proof below
we denote the triple homomorphic image
$\{ \utro E(x) + \mathcal{I} : x \in E\}$ of $E$ in
$\TROu(E)/\mathcal{I}$ by $\tilde{E}_{\mathcal{I}}$, noting that the
latter is a completely isometric copy of $E_{\mathcal{I}}$.

\begin{theorem}
\label{thm:201206510}
Let $E$ be a universally reversible \JCstar-triple. Then 
$\mathcal{I} \mapsto E_{\mathcal{I}}$
is a bijective correspondence between the operator space ideals
of $\TROu(E)$
and the \JC -operator space structures of $E$.
\end{theorem}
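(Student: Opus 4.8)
The plan is to prove that the map $\mathcal{I} \mapsto E_{\mathcal{I}}$ is both well-defined and bijective onto the set of \JC-operator space structures of $E$. By definition, every operator space ideal $\mathcal{I}$ of $\TROu(E)$ gives rise to a \JC-operator space structure $E_{\mathcal{I}}$, so the correspondence is surjective essentially by construction. The substance of the theorem is \emph{injectivity}: I must show that if $\mathcal{I}$ and $\mathcal{J}$ are operator space ideals of $\TROu(E)$ with $E_{\mathcal{I}}$ completely isometric to $E_{\mathcal{J}}$ (as \JC-operator spaces, via a map fixing $E$), then $\mathcal{I} = \mathcal{J}$.

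\textbf{First} I would reduce to showing that the relation between operator space ideals and operator space structures is order-preserving and reflects containment: specifically, that $E_{\mathcal{I}}$ and $E_{\mathcal{J}}$ determine the same \JC-operator space structure if and only if $\mathcal{I} = \mathcal{J}$. The key technical leverage is Proposition~\ref{prop:20120659} (\ref{prop:20120659b}), which tells us that every operator space ideal is contained in $\TROu(E_1)$; this lets me quotient out the abelian part and work where no one-dimensional representations interfere. The natural strategy is to identify $\mathcal{I}$ intrinsically from the completely isometric copy $\tilde{E}_{\mathcal{I}}$ sitting inside $\TROu(E)/\mathcal{I}$. Since $\TROu(E)$ is the universal TRO, it is generated as a TRO by $\utro E(E)$, so $\TROu(E)/\mathcal{I}$ is generated by $\tilde{E}_{\mathcal{I}}$; thus a complete isometry $\tilde{E}_{\mathcal{I}} \to \tilde{E}_{\mathcal{J}}$ fixing $E$ should extend to a TRO isomorphism $\TROu(E)/\mathcal{I} \to \TROu(E)/\mathcal{J}$ commuting with the quotient maps, which forces $\mathcal{I} = \mathcal{J}$.

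\textbf{The main obstacle} I expect is justifying that the complete isometry between the two operator space structures extends to a TRO isomorphism of the generated TROs in a way compatible with $\utro E$. For this I would invoke the universal property: the composite $E \to \tilde{E}_{\mathcal{I}} \to \tilde{E}_{\mathcal{J}} \hookrightarrow \TROu(E)/\mathcal{J}$ is a triple homomorphism, hence factors through a TRO homomorphism $\TROu(E) \to \TROu(E)/\mathcal{J}$ by universality, and since the two structures induce the same isometry on $E$, this TRO homomorphism must coincide with the quotient map $q_{\mathcal{J}}$; comparing kernels then yields $\mathcal{I} \subseteq \mathcal{J}$, and symmetry gives equality. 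The delicate point is ensuring the \emph{complete} isometry (not merely isometry) of operator space structures is what produces a TRO isomorphism rather than just a triple isomorphism, which is exactly where the operator space structure—as opposed to the bare triple structure—enters. Here one uses that TROs are precisely the operator spaces on which triple homomorphisms inducing complete isometries become TRO isomorphisms, together with the characterisation of $\TROu(E)/\mathcal{I}$ as the subTRO generated by $\tilde{E}_{\mathcal{I}}$. Once the extension is in hand, injectivity follows formally, completing the bijection.
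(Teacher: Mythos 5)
There is a genuine gap at the heart of your argument: the claim that a complete isometry $\tilde{E}_{\mathcal{I}} \to \tilde{E}_{\mathcal{J}}$ fixing $E$ extends to a TRO isomorphism $\TROu(E)/\mathcal{I} \to \TROu(E)/\mathcal{J}$. A complete isometry between generating subspaces of two TROs does not in general extend to a TRO morphism of the generated TROs --- the generated TRO is not an invariant of the operator space structure (only the triple envelope enjoys such an extension property, by Hamana's theorem, and it is the \emph{smallest} TRO generated by a completely isometric copy). Knowing that $\TROu(E)/\mathcal{I}$ \emph{is} the triple envelope of $E_{\mathcal{I}}$ is precisely Corollary~\ref{coroll:ShilovBdry}, which is deduced \emph{from} Theorem~\ref{thm:201206510}; invoking it here would be circular. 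Your fallback appeal to the universal property only reproduces the quotient map $q_{\mathcal{J}}$, whose kernel is $\mathcal{J}$, and yields no comparison with $\mathcal{I}$ unless the extension through $\TROu(E)/\mathcal{I}$ has already been constructed.

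A telling symptom is that your main argument never uses universal reversibility, whereas the theorem fails without it: for a Hilbert space of dimension $\geq 3$ distinct operator space ideals of the universal TRO can induce the same operator space structure, which is exactly why \cite[Theorem 6.5]{BunceFeelyTimoney} speaks of the \emph{largest} ideal inducing a given structure. The paper's proof instead reduces, via that result, to showing $E_{\mathcal{I}} \neq E_{\mathcal{J}}$ whenever $\mathcal{J}$ is strictly contained in $\mathcal{I}$; it localises to the ideal $K = E \cap (\mathcal{I} + \Phi(\mathcal{I}))$ of $E$, identifies $\tilde{K}_{\mathcal{I}}$ with $\Phi(\mathcal{I})$ as a TRO, constructs a completely contractive TRO homomorphism $\tilde{K}_{\mathcal{J}} \to \mathcal{I}/\mathcal{J}$, and observes that the resulting composite $\Phi(\mathcal{I}) \to \mathcal{I}/\mathcal{J}$ is a TRO antihomomorphism onto a nonabelian TRO (nonabelian by Proposition~\ref{prop:20120659}, which is where universal reversibility enters), hence not completely contractive by Lemma~\ref{lem:TTop}. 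Some argument of this kind, exhibiting an explicit failure of complete contractivity when the ideals differ, is what your proposal is missing.
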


\begin{proof}
We know that the stated correspondence is surjective. By
\cite[Theorem 6.5]{BunceFeelyTimoney}
given an operator space ideal $\mathcal{J}$
of $\TROu(E)$ there is a
largest operator space ideal 
$\mathcal{I}$ of $\TROu(E)$ with
$E_{\mathcal{I}}=  E_{\mathcal{J}}$. To  establish
injectivity it is enough to show that this largest 
$\mathcal{I}$ coincides with
$\mathcal{J}$.
Suppose on the contrary that $\mathcal{I}$, $\mathcal{J}$
are operator space ideal of $\TROu(E)$
with $\mathcal{J}$ strictly contained in $\mathcal{I}$ but with
$E_{\mathcal{I}}  =  E_{\mathcal{J}}$.
Simplifying notation we shall regard $E$ as a subtriple of 
$\TROu(E)$. Put $K
=  E \cap (\mathcal{I} + φ\Phi(\mathcal{I}))$.
Then, by
Lemma~\ref{lem:idsTROuE}
and
Proposition~\ref{prop:idsTROuE},
we have $K  =  \{x +
\Phi(x): x ɛ \in \mathcal{I} \}$ and
$\TROu(K)
= \mathcal{I} + φ\Phi(\mathcal{I})
= \mathcal{I} \oplus_\infty φ\Phi(\mathcal{I})$.
In particular,
$\mathcal{I}$ and $\mathcal{J}$ are operator space ideals of 
$\TROu(K)$
and we may regard 
$\tilde{K}_{\mathcal{I}}$
and 
$\tilde{K}_{\mathcal{J}}$
as operator subspaces of $\TROu(K)/\mathcal{I}$
and $\TROu(K)/\mathcal{J}$, respectively. We shall
show that the linear isometry $\pi : \tilde{K}_{\mathcal{I}} \to
\tilde{K}_{\mathcal{J}}$
given by $\pi(x + \mathcal{I})  =
x + \mathcal{J}$, for all $x$ in $K$, is not a complete contraction,
implying $K_{\mathcal{I}} \neq
K_{\mathcal{J}}$
and hence 
$E_{\mathcal{I}} \neq   E_{\mathcal{J}}$.
We have $\tilde{K}_{\mathcal{I}}  =  (K +\mathcal{I})/\mathcal{I}
=  \{φ\Phi(x) + \mathcal{I}: x \inɛ \mathcal{I} \}  =
\TROu(K)/\mathcal{I}   ≅
\cong \Phiφ(\mathcal{I})$
(as TROs).
The map β$\beta \colon \tilde{K}_{\mathcal{I}} \to
\Phi(\mathcal{I})$
 ($\Phi(x) + \mathcal{I} \mapsto ↦ \Phi(x)$)
is a TRO isomorphism and hence a complete isometry.
Define ψ$\psi \colon \TROu(K)/\mathcal{J} \to → \mathcal{I}/\mathcal{J}$
by 
$\psi(a + \Phi(b) + \mathcal{J})  =  a + \mathcal{J}$,
whenever $a, b ɛ\in \mathcal{I}$. Then $\psi$
ψ is a TRO homomorphism restricting  to a surjective complete
contraction ϒ
$\Upsilon \colon \tilde{K}_{\mathcal{J}} \to
 \mathcal{I}/\mathcal{J}$. We note that 
 $\mathcal{I}/\mathcal{J}$
 cannot be abelian, by
Lemma~\ref{lem:20120657} (\ref{lem:20120657c})
together with Proposition~\ref{prop:20120659}
(\ref{prop:20120659a}).
On the other hand the composition $\Upsilon \circ \pi \circ
\beta^{-1}$
\[
\Phi(\mathcal{I}) \to 
\tilde{K}_{\mathcal{I}} 
\to
\tilde{K}_{\mathcal{J}} 
\to
\mathcal{I}/\mathcal{J}
\]
which sends φ$\Phi(x)$ to $x + \mathcal{J}$
for all $x$ in $\mathcal{I}$, is a TRO
antihomomorphism and so, by
Lemma~\ref{lem:TTop} (\ref{lem:TTopb})
cannot be completely
contractive. Hence, $\pi$ is not a complete contraction.
\end{proof}

For the definition and
properties of the triple envelope
$(\mathcal{T}(X), j)$
of an  an operator space $X$
we refer to
\cite{Hamana99}
and
\cite[Chapter 8]{BlecherleMerdy}, and again to the latter for a justification that it be
regarded as the noncommutative Shilov boundary of $X$.
 We should point out a clash of usage: the terms ``triple system''
and ``triple homomorphism''  used in 
\cite{BlecherleMerdy,Hamana99}
should be read as
``TRO''  and  ``TRO homomorphism''. In particular, $\mathcal{T}(X)$
is a TRO.
The following is immediate from
Theorem~\ref{thm:201206510}
and
\cite[Proposition 1.2]{BunceTimoneyII}.

\begin{corollary}
\label{coroll:ShilovBdry}
\begin{enumerate}[(a)]
\item
If $E$ is a universally reversible \JCstar-triple and 
$\mathcal{I}$ is an operator space ideal of 
$\TROu(E)$ then the triple envelope of the operator space
$E_{\mathcal{I}}$
is $(\TROu(E)/\mathcal{I}, j)$,  where 
$j \colon E_{\mathcal{I}} \to → \TROu(E)/\mathcal{I}$
($x↦\mapstoα\utro E (x) + \mathcal{I}$).

\item %(b)
If $E$ is a universally reversible \JCstar-subtriple of a 
\Cstar-algebra $A$ (regarded as an operator subspace of $A$) then
\begin{enumerate}[(i)]
\item
the triple envelope of $E$ is $(\TRO(E), \mathrm{inclusion})$;

\item
every complete isometry from $E$ onto a \JCstar-subtriple $F$ of 
$\BH$
extends to a TRO isomorphism from $\TRO(E)$ onto $\TRO(F)$.
\end{enumerate}
\end{enumerate}
\end{corollary}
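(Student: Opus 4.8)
The plan is to derive all three assertions from Theorem~\ref{thm:201206510} together with the quoted identification of triple envelopes, using only two standard facts about the triple envelope $\mathcal{T}(\cdot)$: that it is the minimal TRO generated by a completely isometric copy of the operator space, and that it is functorial for surjective complete isometries (so such a map lifts uniquely to a TRO isomorphism of triple envelopes).

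For part~(a), I would first note that $j \colon E_{\mathcal{I}} \to \TROu(E)/\mathcal{I}$ is by construction a complete isometry whose image $j(E)$ generates $\TROu(E)/\mathcal{I}$ as a TRO, since $\utro E(E)$ generates $\TROu(E)$ and the quotient map is a surjective TRO homomorphism. Thus $(\TROu(E)/\mathcal{I}, j)$ is a candidate for the triple envelope. By \cite[Proposition 1.2]{BunceTimoneyII} the triple envelope of $E_{\mathcal{I}}$ is $\TROu(E)/\mathcal{I}_0$, where $\mathcal{I}_0$ is the largest operator space ideal of $\TROu(E)$ inducing the same \JC-operator space structure (its existence being \cite[Theorem 6.5]{BunceFeelyTimoney}). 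Since $E$ is universally reversible, Theorem~\ref{thm:201206510} asserts that $\mathcal{I} \mapsto E_{\mathcal{I}}$ is injective, so $E_{\mathcal{I}_0} = E_{\mathcal{I}}$ forces $\mathcal{I}_0 = \mathcal{I}$, and the triple envelope of $E_{\mathcal{I}}$ is exactly $(\TROu(E)/\mathcal{I}, j)$.

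For part~(b)(i) I would realise the ambient operator space structure of $E \subseteq A$ as some $E_{\mathcal{I}}$. By the universal property of $\TROu(E)$ the inclusion $E \hookrightarrow A$ extends to a TRO homomorphism $\theta \colon \TROu(E) \to A$ with $\theta \circ \utro E$ the inclusion and range $\TRO(E)$; its kernel $\mathcal{I}$ meets $\utro E(E)$ trivially, because $\theta$ restricted to $\utro E(E)$ is the injective inclusion, so $\mathcal{I}$ is an operator space ideal. The resulting TRO isomorphism $\TROu(E)/\mathcal{I} \cong \TRO(E)$ carries $j$ to the inclusion $E \hookrightarrow \TRO(E)$ and identifies $E_{\mathcal{I}}$ with $E$ in its given structure, so (b)(i) follows at once from~(a). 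For part~(b)(ii), given a surjective complete isometry $\phi \colon E \to F$ onto a \JCstar-subtriple $F$ of $\BH$, I would observe that $\phi$, being a surjective linear isometry, is automatically a triple isomorphism, and that $F$ inherits universal reversibility from $E$ (this property being stable under linear isometry); hence (b)(i) applies to $F$ as well, giving $\mathcal{T}(F) = \TRO(F)$. Functoriality of the triple envelope then lifts $\phi$ to a unique TRO isomorphism $\mathcal{T}(E) \to \mathcal{T}(F)$, that is, a TRO isomorphism $\TRO(E) \to \TRO(F)$ extending $\phi$.

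I expect no genuine obstacle here: once Theorem~\ref{thm:201206510} is in hand, the whole argument is the passage from the ``largest ideal'' description of the triple envelope to the ideal $\mathcal{I}$ itself, which is precisely what injectivity supplies. The only steps needing (routine) care are checking that $j(E)$ generates the quotient TRO and that the kernel ideal in~(b)(i) is indeed an operator space ideal; both are immediate from the universal property of $\TROu(E)$ and the injectivity of $\utro E$.
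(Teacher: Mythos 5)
Your proposal is correct and follows essentially the same route as the paper, which simply declares the corollary immediate from Theorem~\ref{thm:201206510} and \cite[Proposition 1.2]{BunceTimoneyII}; you have merely written out the details (injectivity of $\mathcal{I}\mapsto E_{\mathcal{I}}$ forcing the largest ideal $\mathcal{I}_0$ to equal $\mathcal{I}$, realisation of the ambient structure in (b)(i) via the kernel of the canonical TRO homomorphism, and rigidity/functoriality of the triple envelope for (b)(ii)). No gaps.
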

                               
\begin{theorem}
\label{thm:20120622512}
The following are equivalent for  a universally
reversible \JCstar-triple $E$.
\begin{enumerate}[(a)]
\item %(a)
\label{thm:20120622512a}
$E$ has a unique \JC-operator space structure.

\item %(b)
\label{thm:20120622512b}
$\TROu(E)$ has no nonzero operator space ideal.

\item %(c)
\label{thm:20120622512c}
$E$ has no ideal linearly isometric to a nonabelian TRO.

\item %(d)
\label{thm:20120622512d}
If  $\pi : E \to \BH$
is an injective triple homomorphism, then
$(\TRO(\pi(E)), \pi)$ is the universal TRO of $E$.

\end{enumerate}
\end{theorem}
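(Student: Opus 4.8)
The plan is to route all four conditions through the bijection of Theorem~\ref{thm:201206510} and the dictionary it provides between operator space ideals of $\TROu(E)$ and ideals of $E$ that are isometric to TROs. The equivalence (\ref{thm:20120622512a}) $\iff$ (\ref{thm:20120622512b}) is immediate from Theorem~\ref{thm:201206510}: since $\mathcal{I} \mapsto E_{\mathcal{I}}$ is a bijection between the operator space ideals of $\TROu(E)$ and the \JC-operator space structures of $E$, and the zero ideal already yields the structure coming from $E \hookrightarrow \TROu(E)$, uniqueness of the structure is the same as the zero ideal being the only operator space ideal. The equivalence (\ref{thm:20120622512b}) $\iff$ (\ref{thm:20120622512d}) is the universal property made explicit: for an injective triple homomorphism $\pi \colon E \to \BH$ the induced TRO homomorphism $\tilde{\pi} \colon \TROu(E) \to \BH$ has image $\TRO(\pi(E))$ and kernel $\mathcal{I}_\pi$ meeting $\utro E(E)$ only in $0$, so $\mathcal{I}_\pi$ is an operator space ideal, and $(\TRO(\pi(E)), \pi)$ is the universal TRO exactly when the comparison map $\tilde{\pi}$ is an isomorphism, i.e.\ when $\mathcal{I}_\pi = \{0\}$. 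Conversely every operator space ideal $\mathcal{I}$ is realised as some $\mathcal{I}_\pi$ via the embedding $x \mapsto \utro E(x) + \mathcal{I}$ into $\TROu(E)/\mathcal{I}$, so (\ref{thm:20120622512d}) forces every operator space ideal to vanish.

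The content lies in (\ref{thm:20120622512b}) $\iff$ (\ref{thm:20120622512c}), which I would prove by contraposition in each direction. For $\neg$(\ref{thm:20120622512b}) $\Rightarrow \neg$(\ref{thm:20120622512c}), take a nonzero operator space ideal $\mathcal{I}$ of $\TROu(E)$. By Proposition~\ref{prop:idsTROuE} (\ref{prop:idsTROuEc}) the map $x \mapsto x + \Phi(x)$ is a triple isomorphism of $\mathcal{I}$ onto an ideal of $\utro E(E) \cong E$, say $\utro E(K)$ for an ideal $K$ of $E$; thus $K$ is linearly isometric to the TRO $\mathcal{I}$. Since $\mathcal{I}$ is a nonzero operator space ideal it is nonabelian by Proposition~\ref{prop:20120659} (\ref{prop:20120659a}), so $K$ is an ideal of $E$ linearly isometric to a nonabelian TRO, giving $\neg$(\ref{thm:20120622512c}).

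For the reverse, $\neg$(\ref{thm:20120622512c}) $\Rightarrow \neg$(\ref{thm:20120622512b}), let $K$ be an ideal of $E$ linearly isometric to a nonabelian TRO $T \subset \BH$, and let $K_1$ be the intersection of the kernels of the one-dimensional representations of $K$; under the isometry $K \cong T$ it corresponds to the analogous ideal $T_1$ of $T$. I would first show $T_1$ is a \emph{nonabelian} TRO with no one-dimensional representations. Indeed $T$ nonabelian forces $T_1 \neq \{0\}$, since otherwise the one-dimensional representations separate $T$ and Proposition~\ref{prop:abeliancase} makes $T$ abelian; and $T_1$ cannot itself be abelian, because any one-dimensional representation of the ideal $T_1$ extends to one of $T$ and hence annihilates $T_1$, so an abelian (thus separated) $T_1$ would be zero. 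Now $K_1 \cong T_1$ is universally reversible as an ideal of $E$, so Theorem~\ref{thm:TROurcharact} rules out representations onto Hilbert spaces of dimension $\geq 3$; together with the absence of one-dimensional representations this places $T_1$ under the hypotheses of Theorem~\ref{thm:TROuofT}. Part (\ref{thm:TROuofTb}) then gives $T_1$, and hence $K_1$, at least three distinct \JC-operator space structures, whence $\TROu(K_1)$ has a nonzero operator space ideal. Two applications of Lemma~\ref{lem:opspidealsofideals} (first along the ideal $K_1$ of $K$, then along the ideal $K$ of $E$) transport this to a nonzero operator space ideal of $\TROu(E)$, establishing $\neg$(\ref{thm:20120622512b}).

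I expect the main obstacle to be precisely this reduction to $T_1$: one must ensure that discarding the one-dimensional representations does not collapse nonabelianness, so that Theorem~\ref{thm:TROuofT} becomes applicable. The delicate point is the extension of one-dimensional representations from the ideal $T_1$ up to $T$, which is exactly what forces $T_1$ to remain nonabelian once it is nonzero; the remaining equivalences are then routine consequences of Theorem~\ref{thm:201206510} and the universal property.
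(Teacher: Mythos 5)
Your proposal is correct and follows essentially the same route as the paper: the easy equivalences via Theorem~\ref{thm:201206510} and the universal property, the implication from a nonzero operator space ideal to a nonabelian TRO ideal via Propositions~\ref{prop:idsTROuE} and \ref{prop:20120659}, and the converse by passing to the ideal killed by one-dimensional representations and invoking Theorems~\ref{thm:TROurcharact} and \ref{thm:TROuofT}. The only cosmetic differences are that you arrange the cycle as (b)$\iff$(c) rather than (a)$\Rightarrow$(c)$\Rightarrow$(b), and you re-derive inline the facts about $T_1$ that the paper simply cites from Lemma~\ref{lem:20120658}.
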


\begin{proof}
The equivalences
(\ref{thm:20120622512a}) $\iff$ (\ref{thm:20120622512b})
and
(\ref{thm:20120622512b}) $\iff$ (\ref{thm:20120622512d})
are immediate
from Theorem~\ref{thm:201206510}
and properties of $\TROu( \cdot )$, respectively.

(\ref{thm:20120622512a}) $\Rightarrow$ (\ref{thm:20120622512c}).
Suppose that $E$ has an ideal $I$ linearly isometric to a
nonabelian TRO. Via Lemma~\ref{lem:20120658}
$I$ has a nonzero ideal $I_1$
linearly isometric
to a universally reversible TRO without one dimensional representations.
By Theorem~\ref{thm:TROurcharact}
the latter TRO satisfies the conditions of Theorem~\ref{thm:TROuofT}
so that $I_1$, and hence $E$, has at least three \JC-operator
structures.

(\ref{thm:20120622512c}) $\Rightarrow$ (\ref{thm:20120622512b}).
Suppose $\mathcal{I}$
is a nonzero operator space ideal of
$\TROu(E)$. Then $\mathcal{I}$
is nonabelian by Proposition~\ref{prop:20120659}
and is linearly
isometric to an ideal of $E$ by Proposition~\ref{prop:idsTROuE},
proving the implication.
\end{proof}
□

\begin{remarks}
Since the nonzero operator space ideals occur in
pairs (Lemma \ref{lem:idsTROuE} (\ref{lem:idsTROuEa})),
it follows from Theorem~\ref{thm:201206510}
that the number of distinct
\JC-operator spaces of a universally reversible JC*-triple must be odd or
infinite.
In
\cite[Remark 6.9]{BunceFeelyTimoney}
we proved that for a projection $e$ of rank $\geq 2$ in $\BH$, the
universally reversible Cartan factor $\BH e$ has three or
$2|\alpha| + 5$
\JC-operator space structures according to whether
the rank of $e$ is finite or an infinite cardinal $\aleph_\alpha$
respectively, with $|\alpha|$ denoting the cardinality of
the ordinal segment $[0, \alpha)$
so that, as $|\alpha|$
varies over finite cardinalities all odd numbers $\geq 3$ arise
(the case $\alpha > 0$ was mis-stated in
\cite[Remark 6.9 (iii)]{BunceFeelyTimoney}).

On the other hand, if $E$ is a universally reversible \JCstar-triple with
no Cartan factor representations 
$\pi \colon E \to C$
such that $C$ has the form
$\BH e$, then $E$ must satisfy
condition 
(\ref{thm:20120622512c})
of
Theorem~\ref{thm:20120622512}
and so must have a unique \JC -operator space structure.

Another corollary of
Theorem~\ref{thm:20120622512}
is that
a simple
universally reversible \JCstar-triple $E$  either has a unique 
\JC-operator space structure
or it has exactly three.
By
Remark~\ref{remark:simpleUR}, this together with
\cite[Proposition 2.4 and Theorem 3.7]{BunceTimoneyII}
and \cite[\S6]{BunceTimoney3}, accounts for the enumeration of the \JC-operator space
structures of all simple \JCstar-triples.
\end{remarks}

Consider a triple homomorphism $\pi \colon E \to \mathcal{B}(K)$ where $E$
is a universally reversible \JCstar-subtriple of $\BH$.
If $E$ has no ideals isometric to a nonabelian TRO, then by
Theorem~\ref{thm:20120622512}
((\ref{thm:20120622512c}) $\Rightarrow$ 
(\ref{thm:20120622512d}))
$\pi$ extends to a TRO homomorphism on $\TRO(E) (= \TROu(E))$ and so
is completely contractive, and further is completely isometric (onto
its range) if $\pi$ is injective.
By contrast, for TROs with few nonzero Hilbert space representations we
have the following consequence of Theorem~\ref{thm:TROuofT} (which
should be compared with the \Cstar-algebra result
\cite[Corollary 4.6]{HancheOlsenJC}).

\begin{proposition}
\label{prop:triplehomdecomp}
Let $T$ be a TRO with no nonzero Hilbert space representations other
(possibly) than of dimension two.
Let $\pi \colon T \to \BH$ be a triple homomorphism.
Then there exist $\pi_1, \pi_2 \colon T \to \BH$ where $\pi_1$ is a
TRO homomorphism and $\pi_2$ is a TRO antihomomorphism such that $\pi
= \pi_1 + \pi_2$ and $\pi_1(T) \perp \pi_2(T)$.
\end{proposition}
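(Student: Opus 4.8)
The plan is to read the decomposition directly off the structure of the universal TRO supplied by Theorem~\ref{thm:TROuofT}. First I would realise $T$ concretely as a subTRO of some $\BH$ and fix a transposition $x \mapsto x^t$ on it; since the hypothesis on $T$ is exactly that of Theorem~\ref{thm:TROuofT}, part~(\ref{thm:TROuofTa}) of that result gives $\TROu(T) = T \oplus T^t$ with $\utro T(x) = x \oplus x^t$. This exhibits $\TROu(T)$ as the orthogonal sum $\mathcal{I}_1 \oplus_\infty \mathcal{I}_2$ of the two ideals $\mathcal{I}_1 = T \oplus \{0\}$ and $\mathcal{I}_2 = \{0\} \oplus T^t$.

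Next I would invoke the universal property of $(\TROu(T), \utro T)$ to obtain the TRO homomorphism $\tilde\pi \colon \TROu(T) \to \BH$ with $\tilde\pi \circ \utro T = \pi$, and then simply set
\[
\pi_1(x) = \tilde\pi(x \oplus 0), \qquad \pi_2(x) = \tilde\pi(0 \oplus x^t) \qquad (x \in T).
\]
Because $\utro T(x) = (x \oplus 0) + (0 \oplus x^t)$ and $\tilde\pi$ is linear, the identity $\pi = \pi_1 + \pi_2$ is immediate, so the content of the proof reduces to checking the three remaining properties.

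That $\pi_1$ is a TRO homomorphism is clear, being the composite of the TRO embedding $x \mapsto x \oplus 0$ of $T$ into $\mathcal{I}_1$ with $\tilde\pi$. For $\pi_2$ I would use the opening remarks of this section: $\id \colon T \to T^\OP$ is a TRO anti-isomorphism and $T^\OP \to T^t$ ($x \mapsto x^t$) a TRO isomorphism, so $x \mapsto 0 \oplus x^t$ is a TRO antihomomorphism of $T$ onto $\mathcal{I}_2$; composing it with the TRO homomorphism $\tilde\pi$ yields a TRO antihomomorphism, namely $\pi_2$.

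Finally, for the orthogonality $\pi_1(T) \perp \pi_2(T)$, I would exploit that $\tilde\pi$, being a TRO and hence a triple homomorphism, preserves orthogonality: for $x, y \in T$ the elements $x \oplus 0 \in \mathcal{I}_1$ and $0 \oplus y^t \in \mathcal{I}_2$ lie in orthogonal ideals, so $\tp{(x\oplus 0)}{(x\oplus 0)}{(0\oplus y^t)} = 0$, whence $\pi_1(x) = \tilde\pi(x \oplus 0) \perp \tilde\pi(0 \oplus y^t) = \pi_2(y)$, giving $\pi_1(T) \perp \pi_2(T)$. I expect this last step to be the only point needing a word of care---confirming that orthogonality in the sense $\tp aab = 0$ (equivalently $a^*b = ab^* = 0$ for concrete operators) is transported by a triple homomorphism---but it is routine, since orthogonality is expressed through the triple product that $\tilde\pi$ respects. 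In short, Theorem~\ref{thm:TROuofT} does the conceptual work, and no genuine obstacle remains beyond assembling these observations.
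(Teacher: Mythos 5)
Your proposal is correct and follows essentially the same route as the paper: apply Theorem~\ref{thm:TROuofT} to identify $\TROu(T)$ with $T \oplus T^t$, lift $\pi$ to the TRO homomorphism $\tilde\pi$ on this sum, and define $\pi_1$, $\pi_2$ by restricting $\tilde\pi$ to the two orthogonal summands. The paper compresses the final verifications into ``the result follows,'' whereas you spell out why $\pi_2$ is an antihomomorphism and why the ranges are orthogonal; both checks are carried out correctly.
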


\begin{proof}
Supposing $T \subseteq \mathcal{B}(K)$ and $x \mapsto x^t$ to be a
transposition of $\mathcal{B}(K)$
Theorem~\ref{thm:TROuofT}
implies that there is a TRO homomorphism $\tilde{\pi} \colon T \oplus
T^t \to \BH$ such that $\pi(x) = \tilde{\pi}(x \oplus x^t)$ for all $x
\in T$. Defining $\pi_1, \pi_2 \colon T \to \BH$ by $\pi_1(x) =
\tilde{\pi}(x \oplus 0)$ and $\pi_2(x) = \tilde{\pi}(0 \oplus x^t)$,
the result follows.
\end{proof}

We remark that by the results
of \cite[\S3 and \S4]{BunceTimoney3}, with the
notation of
Proposition~\ref{prop:triplehomdecomp}, there is a central projection
$z$ in the right von Neumann algebra of $\pi(T)$ such that $\pi_1(x) =
\pi(x) z$ and $\pi_2(z) = \pi(x)(1-z)$ for all $x \in T$.

\begin{corollary}
\label{coroll:TROhomORantiHom}
Let $T$ be as in Proposition~\ref{prop:triplehomdecomp} and let
$\pi \colon T \to \BH$ be a triple
homomorphism such that the weak*-closure $\overline{\TRO(\pi(T))}$ of
$\TRO(\pi(T))$ is a \Wstar-TRO factor. Then $\pi$ is either a TRO
homomorphism or a TRO antihomomorphism.
\end{corollary}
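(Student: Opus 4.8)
The plan is to reduce everything to the decomposition already produced by Proposition~\ref{prop:triplehomdecomp}. Since $T$ has no nonzero Hilbert space representations other than possibly of dimension two, it meets the hypotheses of that proposition, so I would begin by writing $\pi = \pi_1 + \pi_2$ with $\pi_1$ a TRO homomorphism, $\pi_2$ a TRO antihomomorphism and $\pi_1(T) \perp \pi_2(T)$. The remark following Proposition~\ref{prop:triplehomdecomp} supplies the crucial extra data: a central projection $z$ in the right von Neumann algebra $\mathcal{R}$ of $\pi(T)$ (the weak*-closed algebra generated by the products $\pi(a)^*\pi(b)$, $a,b \in T$) with $\pi_1(x) = \pi(x) z$ and $\pi_2(x) = \pi(x)(1-z)$ for every $x \in T$. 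The whole corollary then comes down to showing that the factor hypothesis forces $z \in \{0,1\}$.

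Next I would identify $\mathcal{R}$ with the right von Neumann algebra of $M := \overline{\TRO(\pi(T))}$. Since $M$ is the weak*-closure of the TRO generated by $\pi(T)$, the von Neumann algebra generated by the products $\pi(a)^*\pi(b)$ coincides with the one generated by the $m^*n$ for $m,n \in M$, so $\mathcal{R} = \mathcal{R}(M)$. The key structural input, drawn from the $W^*$-TRO theory of \cite[\S3 and \S4]{BunceTimoney3}, is the correspondence between weak*-closed ideals of $M$ and central projections of $\mathcal{R}(M)$: a central projection $p \in \mathcal{R}(M)$ yields the weak*-closed ideal $Mp$, and every weak*-closed ideal arises this way. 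Consequently $M$ being a $W^*$-TRO factor is equivalent to $Z(\mathcal{R}(M)) = \IC 1$, that is, to $\mathcal{R}(M)$ being a von Neumann factor.

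Granting this, $z$ is a central projection of $\mathcal{R} = \mathcal{R}(M)$, and the latter is a factor, so $z = 0$ or $z = 1$. If $z = 1$ then $\pi_2(x) = \pi(x)(1-z) = 0$ for all $x$, whence $\pi = \pi_1$ is a TRO homomorphism; if $z = 0$ then $\pi_1(x) = \pi(x) z = 0$, whence $\pi = \pi_2$ is a TRO antihomomorphism. This yields the required dichotomy.

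I expect the main obstacle to be the identification carried out in the middle paragraph: pinning down that $z$ lies in the centre of $\mathcal{R}(M)$ and that the $W^*$-TRO factor property of $M$ translates exactly into triviality of that centre. This requires care that the right von Neumann algebra of the (possibly non-weak*-closed) image $\pi(T)$ agrees with that of its weak*-closure $M$, and that the ideal/central-projection correspondence for $W^*$-TROs from \cite{BunceTimoney3} is invoked in the correct form. Once that correspondence is in hand, the remaining steps are immediate.
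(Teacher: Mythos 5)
Your argument is correct, and it reaches the same dichotomy by the same overall strategy (apply Proposition~\ref{prop:triplehomdecomp}, then use factoriality to kill one of the two summands), but you implement the factoriality step differently from the paper. The paper argues directly with ideals: since $\pi_1(T)\perp\pi_2(T)$, the TROs $\TRO(\pi_1(T))$ and $\TRO(\pi_2(T))$ are orthogonal ideals of $\TRO(\pi(T))$, so their weak*-closures are orthogonal weak*-closed ideals of the factor $\overline{\TRO(\pi(T))}$, forcing one of them (hence one of $\pi_1,\pi_2$) to vanish. You instead route the argument through the central projection $z$ of the right von Neumann algebra $\mathcal{R}$ supplied by the remark following Proposition~\ref{prop:triplehomdecomp}, identify $\mathcal{R}$ with $\mathcal{R}(M)$ for $M=\overline{\TRO(\pi(T))}$, and invoke the bijection between weak*-closed ideals of a \Wstar-TRO and central projections of its right von Neumann algebra to conclude $z\in\{0,1\}$. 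Both are sound; the two formulations are really the same fact in different clothing, since $z$ is exactly the projection implementing the orthogonal ideal splitting. What the paper's version buys is economy and self-containment: it needs only the orthogonality statement already proved in Proposition~\ref{prop:triplehomdecomp} plus the elementary observation that $\TRO(I)$ is an ideal of $\TRO(E)$ for an ideal $I$ of $E$. Your version leans on the remark (which the paper itself only sketches, citing \cite[\S3 and \S4]{BunceTimoney3}) and on the ideal/central-projection correspondence for \Wstar-TROs, and so carries the extra burden you correctly flag of checking that $\mathcal{R}(\pi(T))=\mathcal{R}(\TRO(\pi(T)))=\mathcal{R}(M)$ and that the correspondence is a genuine bijection; these checks do go through, so there is no gap, but they make the proof longer than necessary.
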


\begin{proof}
The factor condition implies that $\TRO(\pi(T))$ cannot contain a
nontrivial pair of orthogonal ideals. Thus, in the notation of
Proposition~\ref{prop:triplehomdecomp}, either $\pi_1$ is trivial or
$\pi_2$ is, whence the assertion.
\end{proof}

\begin{corollary}
\label{coroll:compcontractive}
Let $\pi \colon T \to \BH$ be a completely contractive triple
homomorphism where $T$ is as in
Proposition~\ref{prop:triplehomdecomp}.
Then $\pi$ is a TRO homomorphism.
\end{corollary}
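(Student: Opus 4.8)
The plan is to reduce the statement to the decomposition already obtained in Proposition~\ref{prop:triplehomdecomp} and then to show that its antihomomorphic part must vanish. First I would invoke Proposition~\ref{prop:triplehomdecomp} to write $\pi = \pi_1 + \pi_2$, where $\pi_1 \colon T \to \BH$ is a TRO homomorphism, $\pi_2 \colon T \to \BH$ is a TRO antihomomorphism, and $\pi_1(T) \perp \pi_2(T)$. The whole task then becomes to prove that $\pi_2 = 0$, for then $\pi = \pi_1$ is a TRO homomorphism.

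The key step is to transfer the complete contractivity of $\pi$ to $\pi_2$, and for this I would promote the orthogonality of the ranges to the matrix level. Since $\pi_1(x)^*\pi_2(y) = 0 = \pi_1(x)\pi_2(y)^*$ for all $x,y \in T$, a direct entrywise computation shows that $M_n(\pi_1(T)) \perp M_n(\pi_2(T))$ inside $M_n(\BH)$: for $X = [\pi_1(x_{ij})]$ and $Y = [\pi_2(x_{ij})]$ one checks $X^*Y = 0 = XY^*$. Because orthogonal elements satisfy $\|a + b\| = \max(\|a\|,\|b\|)$ (recorded in \S\ref{sec:preliminaries}), this gives $\|[\pi(x_{ij})]\| = \max\bigl(\|[\pi_1(x_{ij})]\|, \|[\pi_2(x_{ij})]\|\bigr)$ for every $[x_{ij}] \in M_n(T)$. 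In particular $\|[\pi_2(x_{ij})]\| \le \|[\pi(x_{ij})]\| \le \|[x_{ij}]\|$, since $\pi$ is completely contractive, so $\pi_2$ is completely contractive as well.

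Next, being a completely contractive TRO antihomomorphism, $\pi_2$ has abelian range $\pi_2(T)$ by Lemma~\ref{lem:TTop} (\ref{lem:TTopb}). Finally I would argue that a nonzero $\pi_2$ contradicts the standing hypothesis on $T$: if $\pi_2 \neq 0$ then $\pi_2(T)$ is a nonzero abelian TRO, which by Proposition~\ref{prop:abeliancase} admits a nonzero representation $\rho$ onto $\IC$. As $\IC$ is abelian, the antihomomorphism $\pi_2$ composed with $\rho$ is a triple homomorphism (an antihomomorphism into an abelian target being a homomorphism, as in the proof of Lemma~\ref{lem:TTop}), so $\rho \circ \pi_2$ is a nonzero triple homomorphism $T \to \IC$, i.e.\ a Hilbert space representation of dimension one. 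This is excluded by the hypothesis that $T$ has no nonzero Hilbert space representations except possibly of dimension two. Hence $\pi_2 = 0$ and $\pi = \pi_1$ is a TRO homomorphism.

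I expect the only point needing genuine care to be the passage to matrix level and the resulting maximum-norm identity; once complete contractivity of $\pi_2$ is in hand, the conclusion is a direct chain through Lemma~\ref{lem:TTop} (\ref{lem:TTopb}) and Proposition~\ref{prop:abeliancase} against the dimension hypothesis.
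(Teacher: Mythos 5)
Your proposal is correct and follows essentially the same route as the paper: decompose $\pi=\pi_1+\pi_2$ via Proposition~\ref{prop:triplehomdecomp}, show $\pi_2$ is completely contractive, conclude from Lemma~\ref{lem:TTop}~(\ref{lem:TTopb}) that $\pi_2(T)$ is abelian, and rule out $\pi_2\neq 0$ because a nonzero abelian image would yield a one-dimensional Hilbert space representation of $T$. The only (harmless) difference is how complete contractivity of $\pi_2$ is obtained: you verify $M_n(\pi_1(T))\perp M_n(\pi_2(T))$ directly and use the max-norm formula, whereas the paper notes that $\pi_1(T)$ and $\pi_2(T)$ are ideals of $\pi(T)$, so $\TRO(\pi(T))=\pi_1(T)\oplus_\infty\pi_2(T)$ and the coordinate projection onto $\pi_2(T)$ is completely contractive --- the same fact in slightly different clothing.
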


\begin{proof}
By Proposition~\ref{prop:triplehomdecomp} and its proof, 
$\pi_1(T)$ and $\pi_2(T)$ are triple ideals of $\pi(T)$ so that
$\TRO(\pi(T)) = \TRO(\pi_1(T)) \oplus_\infty \TRO(\pi_2(T)) = \pi_1(T)
\oplus_\infty \pi_2(T)$,
and so $\pi_2$ must be completely contractive. Hence $\pi_2(T)$ is
abelian by
Proposition~\ref{lem:TTop} (\ref{lem:TTopb}), Hence $\pi_2(T) = \{0\}$
as otherwise $T$ would have a one dimensional (Hilbert space)
representation, proving the result.
\end{proof}

\begin{lemma}
\label{lem:WTROLRDECOMP}
If $T$ is a 
a \Wstar-TRO, 
there exist centrally orthogonal projections $e$ and $f$ in a von
Neumann algebra $W$ such that
$T$ is TRO isomorphic to the direct sum $eW + W f$.
If $T$ is a factor,
then $T$ is TRO isomorphic to a weak*-closed one-sided ideal in
a \Wstar-algebra factor.
\end{lemma}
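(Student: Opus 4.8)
The plan is to realise $T$ as a corner of a von Neumann algebra and then split it using the comparison theory already exploited in \S\ref{sec:Reversibility}. Since $T$ is a \Wstar-TRO, the weak*-closures $L$ of $TT^*$ and $R$ of $T^*T$ are von Neumann algebras, and the linking algebra
\[
V = \begin{pmatrix} L & T \\ T^* & R \end{pmatrix}
\]
is again a von Neumann algebra in which, writing $p = 1_L \oplus 0$ and $q = 0 \oplus 1_R$ for the diagonal projections (so that $p + q = 1_V$ and $p$, $q$ are the left and right supports of $T$), one has the TRO isomorphism $T \cong pVq$. This corner description is standard (compare \cite[p.~493]{EffrosOzawaRuan}, the predual of $T$ ensuring that $V$ is a von Neumann algebra), and it reduces the whole statement to an assertion about the single corner $pVq$.

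For the first assertion I would invoke the comparability theorem \cite[p.~293]{TakesakiVolI} to produce a central projection $z \in V$ with $pz \precsim qz$ and $q(1-z) \precsim p(1-z)$. On the summand $z$, a partial isometry $v$ with $v^*v = pz$ and $vv^* = p' \le qz$ makes $x \mapsto vx$ a TRO isomorphism of $Tz = pzVqz$ onto the weak*-closed right ideal $p'(qzVqz)$ of $qzVqz$ (that $x \mapsto vx$ preserves $\trop{\cdot}{\cdot}{\cdot}$ is a routine calculation using $b^* = qz\,b^*\,pz$ for $b \in Tz$). Symmetrically, on $1-z$ a partial isometry $w$ with $w^*w = q(1-z)$ and $ww^* = q' \le p(1-z)$ makes $x \mapsto xw^*$ a TRO isomorphism of $T(1-z)$ onto the weak*-closed left ideal $(p(1-z)Vp(1-z))q'$. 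Taking $W = qzVqz \oplus p(1-z)Vp(1-z)$, $e = p' \oplus 0$ and $f = 0 \oplus q'$, the projections $e$ and $f$ are centrally orthogonal and $T \cong Tz \oplus_\infty T(1-z) \cong eW + Wf$.

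For the factor case the crux is that $V$ is itself a \Wstar-algebra factor. I would verify this directly: every central projection of $V$ is diagonal, $z = z_1 \oplus z_2$ with $z_1 \in Z(L)$, $z_2 \in Z(R)$ and $z_1 T = T z_2$, and $z_1 T$ is then a weak*-closed TRO ideal of $T$. Since $T$ is a factor, $z_1 T \in \{0, T\}$; because $p = 1_L$ and $q = 1_R$ are the full supports of $T$, each alternative forces $z \in \{0, 1_V\}$, so $Z(V) = \mathbb{C}1_V$. Consequently $p$ and $q$ are comparable in the factor $V$: if $p \precsim q$ the construction above presents $T$ as a weak*-closed right ideal of $qVq = R$, and if $q \precsim p$ as a weak*-closed left ideal of $pVp = L$, each being a factor since a reduction of a factor by a nonzero projection is a factor.

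I expect the factor step to be the main obstacle, specifically the verification that $z_1 T$ is a bona fide weak*-closed TRO ideal of $T$ and that the support identities $p = 1_L$, $q = 1_R$ turn the dichotomy $z_1 T \in \{0, T\}$ into the triviality of $z$. This is the real content — in effect the correspondence between weak*-closed TRO ideals of $T$ and central projections of its linking algebra — while the splitting in the general case is essentially bookkeeping with the comparison theory already used above.
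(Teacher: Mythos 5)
Your proposal is correct and takes essentially the same route as the paper's proof: realise $T$ as a corner $pVq$ of a von Neumann algebra (the paper simply cites \cite[\S2]{EffrosOzawaRuan} for the form $gMh$ rather than constructing the linking algebra explicitly), use the comparability theorem to split along a central projection $z$, and transport $Tz$ and $T(1-z)$ by partial isometries onto one-sided ideals of the corners. The only difference is organisational, in the factor case: the paper notes that one summand of the decomposition must vanish and then cuts $W$ down by the central cover of $e$, whereas you show directly that the linking algebra is a factor via the correspondence between its central projections and the weak*-closed TRO ideals of $T$ --- the same underlying fact either way.
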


\begin{proof}
Up to TRO  isomorphism, a \Wstar-TRO $T$
has the form $T = gMh$ where $g$ and
$h$ are projections in a von Neumann algebra $M$
\cite[\S2]{EffrosOzawaRuan}.
By comparison theory, there exist projections $e$, 
$f$ and $z$ in $M$, with $z$ central, such that 
\[
gz \sim e \leq hz \mbox{ and } h(1-z) \sim f \leq g(1-z)
\]
For $W_1 = zhMh$ and $W_2 = (1-z)gMg$, centrality of $z$ implies that
$Tz= (gz)Mh$ is TRO isomorphic to $pMh = e(hMh)z = eW_1$ (see proof of
Lemma~\ref{SquareRev}) and similarly
that $T(1-z)$ is TRO isomorphic to $W_2f$.
The result follows from putting $W = W_1 \oplus W_2$.

If $T$ is a factor, one summand must be zero and if (say) $T = e W$
we may assume that $e$ has central cover 1, in which case $W$ must
be a factor.
\end{proof}

The examples $x \mapsto x^t$ on $\BH$ and $x \oplus y^t \mapsto y \oplus x^t$ on $\BH e
\oplus e^t \BH$, where $e$ is a projection of rank strictly less than the dimension of
$H$, show that neither of the two conditions imposed upon $T$ in the next result can be
removed.

\begin{theorem}
\label{thm:WstarFactorCI}
Let $\pi \colon T \to T$ be a surjective linear isometry where $T$ is
a \Wstar-TRO factor not linearly isometric to a \Cstar-algebra. Then
$\pi$ is a complete isometry.
\end{theorem}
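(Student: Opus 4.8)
The plan is to use that $\pi$, being a surjective linear isometry between \JBstar-triples, is automatically a triple automorphism of $T$, and to prove that it is in fact a TRO \emph{automorphism}; since TRO isomorphisms are precisely the surjective complete isometries, this gives the conclusion. I would begin by separating the case in which $T$ is linearly isometric to a Hilbert space of dimension $\geq 3$. Combining Corollary~\ref{corol:wstartroUR} with the fact that a \JWstar-triple factor of type I$_1$ is a Hilbert space, one sees that a \Wstar-TRO factor fails to be universally reversible exactly when it is such a Hilbert space; hence, by Theorem~\ref{thm:TROurcharact}, every \Wstar-TRO factor other than these has no triple homomorphism onto a Hilbert space of dimension $\geq 3$.

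In the Hilbert space case $T$ has rank one, so Lemma~\ref{lem:WTROLRDECOMP} exhibits it as a row or column Hilbert space. These are homogeneous operator spaces, every bounded endomorphism having completely bounded norm equal to its operator norm, so every surjective linear isometry of $T$ is automatically a complete isometry and this case is settled.

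Otherwise $T$ has no triple homomorphism onto a Hilbert space of dimension $\geq 3$, so it satisfies the hypothesis of Proposition~\ref{prop:triplehomdecomp}. Since $\pi(T)=T$ we have $\overline{\TRO(\pi(T))}=T$, a \Wstar-TRO factor, so Corollary~\ref{coroll:TROhomORantiHom} applies: $\pi$ is either a TRO homomorphism or a TRO antihomomorphism. If $\pi$ is a TRO homomorphism then, being bijective, it is a TRO automorphism, hence a complete isometry, as required. It remains to rule out the antihomomorphism alternative, and it is here that the hypothesis that $T$ is not linearly isometric to a \Cstar-algebra must be used.

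I expect this exclusion to be the main obstacle. A bijective TRO antihomomorphism is a TRO anti-automorphism, equivalently a TRO isomorphism $T\to T^\OP$. By Lemma~\ref{lem:WTROLRDECOMP} I may take $T=eW$, a weak*-closed one-sided ideal in a von Neumann factor $W$ with $e$ of central cover $1$; its linking von Neumann algebra is the factor $A=\diag(e,1)\,M_2(W)\,\diag(e,1)$, in which the range projection $e_1=\diag(e,0)$ and the source projection $e_2=\diag(0,1)$ satisfy $T=e_1Ae_2$. Because the passage to the linking algebra is functorial and the linking algebra of $T^\OP$ is $A^\OP$ with range and source projections interchanged, the anti-automorphism induces a $*$-anti-automorphism $\psi$ of $A$ with $\psi(e_1)=e_2$ and $\psi(e_2)=e_1$. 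Now $e_1\sim\diag(0,e)\leq e_2$, so $e_1\precsim e_2$; applying $\psi$, which preserves subordination and Murray--von Neumann equivalence, yields $e_2\precsim e_1$, whence $e_1\sim e_2$ by the comparison theorem. Restricting the resulting equivalence $\diag(0,e)\sim\diag(0,1)$ to the corner $e_2Ae_2=W$ gives $e\sim 1$ in $W$, so that $T=eW$ is TRO isomorphic to $W$ and in particular linearly isometric to a von Neumann algebra. This contradicts the hypothesis, so $\pi$ cannot be an antihomomorphism and is therefore a TRO automorphism, completing the proof.
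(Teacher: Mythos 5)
Your architecture is right and diverges from the paper's proof in two interesting ways, but there is one genuine gap. It occurs at the sentence ``Otherwise $T$ has no triple homomorphism onto a Hilbert space of dimension $\geq 3$, so it satisfies the hypothesis of Proposition~\ref{prop:triplehomdecomp}.'' That hypothesis excludes \emph{all} nonzero Hilbert space representations except possibly those of dimension two; in particular it excludes one-dimensional representations, about which your universal-reversibility argument (Corollary~\ref{corol:wstartroUR} plus Theorem~\ref{thm:TROurcharact}) says nothing --- universally reversible \JCstar-triples can have an abundance of one-dimensional representations. The exclusion is genuinely needed: Proposition~\ref{prop:triplehomdecomp} rests on Theorem~\ref{thm:TROuofT}, which invokes \cite[Corollary 4.5]{BunceFeelyTimoney} and requires $T$ to have no ideals of codimension one. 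The paper closes this by showing that \emph{any} nonzero Hilbert space representation $\psi$ of $eW$, of whatever dimension, carries the complete tripotent $e$ to a minimal complete tripotent and so induces a one-dimensional Jordan $*$-homomorphism on the factor $eWe=(eW)_2(e)$, forcing $e$ minimal and $W$ type I, i.e.\ forcing $T$ to be a Hilbert space. You need this (or an equivalent) argument; with it, your case split should be ``$T$ is a Hilbert space'' (settled by homogeneity of row and column spaces, as you do) versus ``$T$ has no nonzero Hilbert space representations at all,'' after which the rest of your proof goes through.

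On the points where you differ from the paper: your exclusion of the anti-automorphism via the linking von Neumann algebra $A=\diag(e,1)M_2(W)\diag(e,1)$ and the comparison theorem is a valid alternative to the paper's more hands-on computation, in which the induced TRO isomorphism $\psi\colon W^te\to eW$ and $u=\psi(e)$ satisfy $\psi(a)=\psi(\trop aee)=\psi(a)u^*u$, whence $WeW=WeWu^*u$, so $u^*u=1$ because $W$ is a factor, and $uu^*\leq e$ then gives $e\sim 1$. Both routes land on the same contradiction $eW\cong W$. Yours is structurally cleaner but carries the (standard, yet here unproved) burden that a TRO anti-automorphism of the \Wstar-TRO $e_1Ae_2$ induces a $*$-anti-automorphism of the weak linking algebra $A$ interchanging $e_1$ and $e_2$; if you keep this route, that functoriality, including weak*-continuity of the surjective isometry, deserves a citation or a line of proof.
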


\begin{proof}
By Lemma~\ref{lem:WTROLRDECOMP} we may
suppose that $T$ is $eW$ where $e$ is a nonzero projection in
a von Neumann algebra factor $W$. (The argument for the left ideal case
is similar.)
We may further suppose that $T \subset \BH$ and that $x \mapsto x^t$
is a transposition of $\BH$ with $e^t = e$.

If there is a nonzero Hilbert space representation ψ$\psi \colon  eW
\to K$, then the induced map from the factor $eWe  =
(eW)_2(e)$ to $K_2(ψ\psi (e))$
is a one-dimensional Jordan *-homomorphism,
implying that $e$ is a minimal projection and $W$ is a type I factor: in
which case, $eW$ is (linearly isometric to) a Hilbert space and the
result follows from \cite[Proposition 1.5]{BunceTimoneyII}.

Thus we may suppose that $eW$ has no nonzero Hilbert space
representations and so satisfy the conditions of
Proposition~\ref{prop:triplehomdecomp}.

Therefore, by
Corollary~\ref {coroll:TROhomORantiHom},
$\pi$ is a TRO isomorphism or a TRO antiautomorphism.
Assume the latter. Then there is a TRO isomorphism
$\psi \colon W^t e \to eW$.
Putting $u = \psi(e)$ we have
\[
\psi(a) = \psi(a e^* e) = \psi(a) u^* u
\]
for all $a \in W^t e$, giving
$eW = eW u^* u$, hence
$WeW = WeW u^* u$ and therefore
$u^* u = 1$ because $W$ is a factor.
Since $u u^* \leq e$, this implies that $e \sim 1$ and therefore
that $eW$ is TRO isomorphic to $W$, a contradiction.
Therefore, $\pi$ is a TRO isomorphism.
\end{proof}

\def\cprime{$'$} \def\polhk#1{\setbox0=\hbox{#1}{\ooalign{\hidewidth
  \lower1.5ex\hbox{`}\hidewidth\crcr\unhbox0}}}
  \def\polhk#1{\setbox0=\hbox{#1}{\ooalign{\hidewidth
  \lower1.5ex\hbox{`}\hidewidth\crcr\unhbox0}}}

\end{document}